\theoremstyle{plain}
\newtheorem{thm}{Theorem}[section]
\newtheorem{lemma}[thm]{Lemma}
\newtheorem{prop}[thm]{Proposition}
\newtheorem{cor}[thm]{Corollary}
\newtheorem{claim}{Claim}
\newtheorem{obs}{Observation}
\newtheorem{fact}[obs]{Fact}
\newtheorem{thmintro}{Theorem}
\theoremstyle{definition}
\newtheorem{defn}[thm]{Definition}
\newtheorem{rem}[thm]{Remark}
\newtheorem{ex}[thm]{Example}
\newcommand{\mc}{\mathcal}
\newcommand{\R}{\mathbb{R}}
\newcommand{\N}{\mathbb{N}}
\newcommand{\Z}{\mathbb{Z}}
\newcommand{\acts}{\curvearrowright}
\DeclareMathOperator{\diam}{diam}
\DeclarePairedDelimiter\floor{\lfloor}{\rfloor}
\DeclarePairedDelimiter\abs{\lvert}{\rvert}%
\newcommand{\cay}{\mathrm{Cay}(G, \mc S)}
\title{Hyperbolic spaces that detect all strongly-contracting directions}
\author{Stefanie Zbinden}
\begin{document}

\begin{abstract}
    Given a geodesic metric space $X$, we construct a corresponding hyperbolic space, which we call the contraction space, that detects all strongly contracting directions in the following sense; a geodesic in $X$ is strongly contracting if and only if its parametrized image in the contraction space is a quasi-geodesic. If a finitely generated group $G$ acts geometrically on $X$, then all strongly-contracting elements act as WPD elements on the contraction space. If the space $X$ is CAT(0), or more generally Morse-dichotomous, that is if all Morse geodesics are strongly-contracting, then all generalized loxodromics act as WPD elements, implying that the action is what we call ``universally WPD''. 
\end{abstract}

\maketitle

\section*{Introduction}
Acylindrical actions were introduced by Sela \cite{Sela:acylindrical} and Bowditch \cite{Bow:curve}. Acylindrical hyperbolicity was introduced by Osin in \cite{O:acylindrical}, unifying the notions of WPD elements and hyperbolically embedded subgroups, which are of great significance. 

Acylindrical hyperbolicity has been proven to be a powerful tool \cite{BBF:cohom, BF:cohomMCG, DGO:rotating_families, H:cohom, Hull:SC_in_AH, HO:cocycles, S:random_walk, GHPRS:moarkov_chains}. Several classes of groups have been proven to be acylindrically hyperbolic, for example the mapping class groups \cite{MM:MCG}, $Out(F_n)$ \cite{BF:Out}, groups acting geometrically on a space with strongly contracting geodesics \cite{BBF:projection_complex} and many more, see for example \cite{GH:acylindrical, CW:acylindrical, LP:acylindrical, H:cohom}.

One of the most prominent open questions about acylindrical hyperbolicity is whether it is invariant under quasi-isometry or even just commensurability. This question has been asked for example in \cite[Problem 9.1]{DGO:rotating_families} and \cite[Question 2.20]{O:acylindrical}.

In \cite{MO:commensurability}, it is shown that acylindrical hyperbolicity is a commensurability invariant for groups that admit a largest acylindrical action. While not all acylindrically hyperbolic groups admit a largest acylindrical action \cite{A:non-universal-acylindrical}, this illustrates the importance of studying specific acylindrical actions. 

In \cite{PSZ:curtain}, Petyt-Spriano-Zalloum use curtains, which are generalizations of hyperplanes to, for a given CAT(0) group, construct a hyperbolic space on which it acts \emph{universally WPD} (see Definition \ref{def:universal-wpd}).

In this paper, given a geodesic metric space $X$, we associate to it a hyperbolic space $\hat{X}$ called the \emph{contraction space} (see Definition \ref{def:K-contraction-space}). We then show that groups acting geometrically on \emph{weakly Morse-dichotomous} spaces (see Definition \ref{def:morse-dich}) act universally WPD. Note that weakly Morse-dichotomous spaces are spaces where all Morse geodesics are strongly contracting, such as CAT(0)-spaces \cite{C:Morse_CAT(0)}, injective metric spaces \cite{SZ:injective}, groups hyperbolic relative to abelian subgroups \cite{S:distformrelhyp} and certain infinitely presented $C'(1/6)$--small-cancellation spaces \cite{Z:small_cancellation}.

\begin{thmintro}\label{thmintro:non-unifromv2}
    If a finitely generated group $G$ acts geometrically on a weakly Morse-dichotomous space $X$, then the action of $G$ on the contraction space $\hat{X}$ is non-uniformly acylindrical and universally WPD.
\end{thmintro}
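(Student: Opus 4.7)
The plan is to split the conclusion into its two parts and reduce each to properties already summarized in the abstract, using the special geometry of weakly Morse-dichotomous spaces to upgrade Morse information to strongly-contracting information.

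For universal WPD, I would first unwind the definition: every generalized loxodromic element of $G$ must act as a WPD element on $\hat{X}$. Since $G$ acts geometrically on $X$, a result of Sisto identifies the generalized loxodromic elements of $G$ with the Morse elements, namely those whose orbits in $X$ are Morse quasi-geodesics. The weakly Morse-dichotomous hypothesis on $X$ then upgrades every such element to a strongly-contracting element, since its axis (or quasi-axis) is a Morse, hence strongly-contracting, quasi-geodesic. Combining this with the result already stated in the abstract, that every strongly-contracting element of $G$ acts as a WPD element on the contraction space $\hat{X}$, immediately yields that every generalized loxodromic acts WPD, which is the definition of universal WPD.

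For non-uniform acylindricity, I would argue directly from the construction of $\hat{X}$ and cocompactness of the $G$-action on $X$. Fix $\varepsilon > 0$ and two points $\hat{x}, \hat{y} \in \hat{X}$ with $d_{\hat{X}}(\hat{x},\hat{y})$ large. The characterization of quasi-geodesics in $\hat{X}$ as images of strongly-contracting geodesics in $X$ lets me represent a geodesic between $\hat{x}$ and $\hat{y}$ in $\hat{X}$ by a strongly-contracting geodesic $\gamma \subset X$. Any element $g$ moving both $\hat{x}$ and $\hat{y}$ by at most $\varepsilon$ must almost preserve $\gamma$ coarsely, and hence (by cocompactness and the translation length lower bound for loxodromic WPD elements) must lie in a set whose size can be bounded in terms of $d_{\hat{X}}(\hat{x},\hat{y})$. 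This yields the non-uniform version of acylindricity.

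The main obstacle I expect is the non-uniform acylindricity bound: turning a pointwise WPD statement (which controls stabilizers of pairs on a single axis) into a bound for arbitrary pairs of points in $\hat{X}$ with constants that depend on the distance. The delicate step is controlling how the quantitative strongly-contracting constants of $\gamma$ change as one varies $\hat{x},\hat{y}$; I would handle this by using the $G$-action to move a fixed pair of basepoints close to a reference strongly-contracting geodesic, applying the WPD constants there, and then transporting the resulting bound back. If the construction of $\hat{X}$ in the paper uses a fixed parameter $K$ controlling the contraction, this bootstrapping should go through cleanly but does require careful bookkeeping of the dependence of all constants on $d_{\hat{X}}(\hat{x},\hat{y})$.
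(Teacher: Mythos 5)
Your argument for the universal WPD half is essentially the paper's: generalized loxodromic $\Rightarrow$ Morse (Sisto), Morse $\Rightarrow$ strongly contracting (weak Morse-dichotomy), strongly contracting $\Rightarrow$ loxodromic on $\hat{X}$ $\Rightarrow$ WPD (Proposition \ref{prop:strongyl_contracting_implies_wpd} followed by Proposition \ref{prop:loxodromic_implies_wpd}, packaged as Theorem \ref{theorem:wpd_equivalence}). That part is fine.

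The non-uniform acylindricity half has a genuine gap. Your first step is to ``represent a geodesic between $\hat{x}$ and $\hat{y}$ in $\hat{X}$ by a strongly-contracting geodesic $\gamma\subset X$,'' citing the characterization of quasi-geodesics in $\hat{X}$. That characterization (Theorem \ref{thmintro:structure-results}\eqref{prop:structure-quasi-geodesics}) goes the wrong way for this purpose: it says a geodesic of $X$ maps to a parametrized quasi-geodesic of $\hat{X}$ \emph{iff} it is strongly contracting, but it does not produce a strongly contracting geodesic of $X$ joining two arbitrary points $x,y$ that are far apart in $\hat{d}$. In general no such geodesic exists: a geodesic $[x,y]$ in $X$ may pass through many ``flat'' regions and fail to be strongly contracting, even though its image is (unparametrized) close to an $\hat{X}$-geodesic by the guessing-geodesics argument. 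All the construction guarantees is that, for suitable intermediate points $x',y'$ on $[x,y]$ with $\hat{d}(x',y')\geq 2$, the pair $(x',y')$ is not anti-contracting, i.e.\ some \emph{subsegment} of some geodesic from $x'$ to $y'$ is long and thin, yielding a single pinch point $m$. The paper's proof (Lemma \ref{lemma:quadrangle-estimates} feeding into Theorem \ref{thm:non-uniform-action-full}) runs a quadrangle argument through that one pinch point, shows any $g$ with $\hat{d}(x,gx)<R$ and $\hat{d}(y,gy)<R$ must satisfy $d([gx,gy],[x,y])\leq r_0$ with $r_0$ independent of $g$, and then invokes properness of $G\acts X$ to get finiteness. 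The technical work is Lemma \ref{lemma:closest_point_projections}, which compares closest-point projections with respect to $d$ and $\hat{d}$; nothing in this half uses Morse-dichotomy or cocompactness.

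Two further points in your sketch would not survive scrutiny. First, the elements $g$ counted in the acylindricity condition are arbitrary (typically elliptic or torsion), so a ``translation length lower bound for loxodromic WPD elements'' is not applicable to them. Second, non-uniform acylindricity asks only that the set be \emph{finite} for each pair of far-apart points, not that its cardinality be bounded in terms of $\hat{d}(\hat{x},\hat{y})$; aiming for the latter is both unnecessary and, with only properness in hand, unachievable in general.
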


Since CAT(0) spaces are weakly Morse-dichotomous, the theorem above can be viewed as a generalization of some of the results of \cite{PSZ:curtain}.

\subsection*{Construction of the contraction space} The moral idea behind the construction of the contraction space is as follows. If we have a geodesic metric space $X$ and we cone-off all subset which are ``not-hyperbolic'', then the resulting space $\hat{X}$ is hyperbolic. In practice, instead of coning-off non-hyperbolic subsets, we add intervals of length one between every pair of points whose connecting geodesic is not ``strongly-contracting'' enough, we call such points anti-contracting. The precise execution of these ideas and definitions can be found in Section \ref{sec:construction}. In Section \ref{sec:relatively_hyperbolic_groups} (see Proposition \ref{prop:quasi-isom-to-cone}) we show that for groups hyperbolic relative to subgroups with empty Morse boundary, the contraction space is canonically quasi-isometric to the coned-off graph, showing that the contraction space can and should be viewed as a generalization of the coned-off graph for not necessarily relatively hyperbolic groups. \\

The contraction space can be constructed for any geodesic metric space $X$, not just weakly Morse-dichotomous spaces. If there was a ``nice'' action of a group $G$ on $X$, this induces a ``nice'' action of $G$ on the contraction space $\hat{X}$. The following theorems detail consequences of different assumptions on the ``niceness'' of the action of $G$ on $X$ and the space $X$.

If we assume that a group $G$ acts properly on a geodesic metric space $X$, then the action of $G$ on the contraction space $\hat{X}$ is not necessarily a universal WPD action. However, we still know that it is a non-uniformly acylindrical action, as stated in the theorem below.

\begin{thmintro}\label{thmintro:non-uniform-acylindricity}
    Let $G$ be a group acting properly on a geodesic metric space $X$. Then the action of $G$ on the contraction space $\hat{X}$ is non-uniformly acylindrical. Further, if there exists an element $g\in G$ whose axis in $X$ is strongly contracting, then $G\acts \hat{X}$ has unbounded orbits.
\end{thmintro}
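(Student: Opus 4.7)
The plan is to handle the two claims separately, with unbounded orbits being essentially immediate and non-uniform acylindricity requiring the real work.

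For the unbounded orbits statement, suppose $g \in G$ has a strongly contracting axis $\gamma \subset X$. By the defining property of $\hat X$ advertised in the abstract---a geodesic in $X$ is strongly contracting precisely when its parametrized image in $\hat X$ is a quasi-geodesic---the inclusion $\gamma \hookrightarrow \hat X$ is a quasi-isometric embedding. Since $g$ acts on $\gamma$ by translation of nonzero $X$-translation length, the orbit $\{g^n x_0\}$ of a point $x_0 \in \gamma$ is unbounded along $\gamma$ in the $X$-metric, hence unbounded in $\hat X$.

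For non-uniform acylindricity, the plan is to fix a basepoint $x \in \hat X$ and $\eps > 0$, and to produce $R = R(x,\eps)$ and $N = N(x,\eps)$ so that whenever $d_{\hat X}(x,y) \geq R$, the set $\{g \in G : d_{\hat X}(x,gx), d_{\hat X}(y,gy) \leq \eps\}$ has cardinality at most $N$. My approach is: (i) promote an $\hat X$-(quasi-)geodesic from $x$ to $y$ to a structured path alternating between short added unit-edges and strongly contracting sub-geodesics of $X$; (ii) observe that when $d_{\hat X}(x,y)$ is large enough relative to $\eps$, at least one of these contracting sub-geodesics must have large $X$-length (the number of added edges needed is controlled by the $\hat X$-distance); (iii) use the characterization of strongly contracting geodesics as $\hat X$-quasi-geodesics in reverse: a $g$ that $\eps$-fixes the $\hat X$-endpoints of such a quasi-geodesic must coarsely preserve it in $X$ up to a bounded error $D = D(\eps)$; (iv) invoke properness of $G \acts X$ to conclude that only finitely many group elements can send a chosen point on this contracting segment to within $D$ of itself in $X$.

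The main obstacle is step (iii), converting ``approximate fixing in $\hat X$'' into ``bounded motion in $X$ along a strongly contracting segment''. The difficulty is that the added unit-length edges can link pairs of points arbitrarily far apart in $X$, so the hypothesis $d_{\hat X}(x,gx) \leq \eps$ gives no direct $X$-control. I expect to overcome this by exploiting the rigidity inherent in strongly contracting geodesics inside $\hat X$: since such geodesics embed quasi-isometrically into $\hat X$, their coarse $\hat X$-position determines their coarse $X$-position, so approximate $\hat X$-fixing of the endpoints of a strongly contracting segment forces the segment itself to be coarsely preserved in $X$. Once this rigidity is made quantitative, combining it with properness delivers the finite bound $N(x,\eps)$. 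The ``non-uniform'' qualifier reflects that both $R$ and $N$ will genuinely depend on the basepoint $x$, because the scale at which contracting sub-geodesics appear in an $\hat X$-geodesic through $x$ may itself vary with $x$.
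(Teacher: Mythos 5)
Your treatment of the unbounded-orbits claim is correct and is the paper's argument: a strongly contracting axis is close to a strongly contracting geodesic (Lemma \ref{lemm:nearby-geodesic}), whose image in $\hat{X}$ is a quasi-geodesic (Lemma \ref{lemma:contracting_implies_quasi_geodesic}, Corollary \ref{corollary:unbounded_if_strongly_contracting}), so orbits are unbounded.

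For non-uniform acylindricity there is a genuine gap at your step (iii), and although you correctly identify it as the crux, the mechanism you propose for closing it rests on a false principle. ``Since strongly contracting geodesics embed quasi-isometrically into $\hat{X}$, their coarse $\hat{X}$-position determines their coarse $X$-position'' is not true: a point can be at $\hat{d}$-distance $1$ from a contracting segment (via a single added edge joining an anti-contracting pair) while being arbitrarily far from it in $X$. The quasi-isometric embedding controls distances \emph{along} the segment; it says nothing about the $X$-location of points that are merely $\hat{d}$-close to it, which is exactly the information you need. What actually converts ``$g$ moves $x$ and $y$ by less than $R$ in $\hat{d}$'' into $X$-metric control is the pinch-point property of thin segments: choosing points $x',y'$ in the middle of an $X$-geodesic $[x,y]$ with $\hat{d}(x',y')\geq 2$, the pair $(x',y')$ is not anti-contracting, so some witness geodesic between them contains an $r_0$-thin subsegment with midpoint $m$, and \emph{every} geodesic quadrangle in $X$ containing that subsegment must pass within $r_0$ of $m$ in the $X$-metric. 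Applied to the quadrangle $([x,y],[y,gy],g\cdot[y,x],[gx,x])$, this forces one of the other three sides within $r_0$ of $m$; one must then still exclude the two short sides $[x,gx]$ and $[y,gy]$, which requires comparing closest-point projections with respect to $d$ and $\hat{d}$ (Lemma \ref{lemma:closest_point_projections}, packaged as Lemma \ref{lemma:quadrangle-estimates}) --- a second ingredient absent from your outline. Only after that does properness of $G\acts X$ apply, to the condition $d(m, g\cdot[y,x])\leq r_0$.

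A secondary issue is that you are aiming at the wrong quantifiers. Definition \ref{def:non-uniform} requires a threshold $D$ depending only on $R$ (not on the basepoint) and asks only for \emph{finiteness} of the set of elements, not a uniform cardinality bound $N(x,\eps)$ over all distant $y$. The argument above delivers precisely that: the thinness parameter $r_0$ depends on the pair $(x,y)$ and is uncontrolled, so properness yields a finite but non-uniform count. That dependence of $r_0$ on the pair --- not the basepoint-dependence of the scale at which contracting pieces appear --- is the true source of the ``non-uniform'' qualifier.
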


If we require that $G$ is finitely generated and acts geometrically on a geodesic metric space $X$, then we retrieve that being strongly-contracting, WPD, and loxodromic is equivalent.

\begin{thmintro}\label{thmintro:wpd-lox-contr-equivalence}
    Let $G$ be a finitely generated group which acts geometrically on a geodesic metric space $X$. Let $g\in G$ be an element. The following are equivalent; 
    \begin{enumerate}
        \item $g$ is loxodromic (w.r.t the action on $\hat{X}$),
        \item $g$ is WPD (w.r.t the action on $\hat{X}$),
        \item $g$ is strongly-contracting (w.r.t the action on $X$).
    \end{enumerate}
\end{thmintro}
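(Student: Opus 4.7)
The plan is to split the three-way equivalence into pairwise implications, observing that two of them reduce quickly to material already available. The implication $(2)\Rightarrow(1)$ is immediate because, under the standard convention, WPD elements are loxodromic. The implication $(1)\Rightarrow(2)$ follows from Theorem~\ref{thmintro:non-uniform-acylindricity}: the action $G\acts\hat X$ is non-uniformly acylindrical, and it is standard that in such an action every loxodromic element is WPD (apply the non-uniform acylindricity condition to the pair $(x_0, g^N x_0)$ once $N$ is large enough that these two points are far apart). For $(3)\Rightarrow(1)$, invoke the defining property of $\hat X$ recalled in the abstract: a strongly-contracting axis of $g$ in $X$ maps to a quasi-geodesic in $\hat X$. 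Because this image coincides, up to bounded error, with an orbit of $\langle g\rangle$, the map $n\mapsto g^n x_0$ is a quasi-isometric embedding of $\Z$ into $\hat X$, so $g$ is loxodromic.

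The substance of the argument lies in $(1)\Rightarrow(3)$. Fix a basepoint $x_0\in X$. Loxodromicity provides constants $\lambda>0$ and $C\geq 0$ with $d_{\hat X}(x_0, g^n x_0)\geq \lambda\abs{n}-C$, while the geometric action of $G$ on $X$ gives $K\geq 1$ with $d_X(x_0, g^n x_0)\leq K\abs{n}$ for all $n\in\Z$. For each $n$, pick a geodesic $\gamma_n$ in $X$ from $x_0$ to $g^n x_0$; its length is at most $K\abs{n}$. Since $\hat X$ is obtained from $X$ by only adding edges (so $\hat X$-distances are dominated by $X$-distances), the natural map sends $\gamma_n$ to a path in $\hat X$ of length at most $K\abs{n}$ whose endpoints are at $\hat X$-distance at least $\lambda\abs{n}-C$. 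Hence the image of $\gamma_n$ in $\hat X$ is a quasi-geodesic with constants depending only on $\lambda, C, K$, uniformly in $n$.

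By the defining property of $\hat X$, each $\gamma_n$ is therefore strongly contracting, with contraction data independent of $n$. To deduce that $g$ itself is strongly contracting, I would use cocompactness of $G\acts X$ to translate $\gamma_n$ by suitable $h_n\in G$ so that the translated segments have midpoints in a fixed compact set, and then extract a convergent subsequence via Arzel\`a--Ascoli. The limit $\gamma$ is a bi-infinite geodesic that inherits the uniform strong-contraction data. A standard fellow-travel argument, using that the orbit $\langle g\rangle\cdot x_0$ sits within uniformly bounded Hausdorff distance from the $\gamma_n$'s (their endpoints are orbit points, and strong contraction forces the in-between segments to track the orbit), shows $\gamma$ is a quasi-axis of a conjugate of $g$. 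This provides $g$ with a strongly-contracting quasi-axis, which is $(3)$.

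The main obstacle is precisely this limit step: producing a bi-infinite strongly-contracting geodesic from the finite segments $\gamma_n$ and certifying that it is a quasi-axis for a conjugate of $g$, with the strong-contraction constants preserved in the limit. The uniformity established in the preceding paragraph is essential: without it the Arzel\`a--Ascoli extraction would produce a bi-infinite geodesic of no useful geometric quality. Everything else in the proof is either a direct appeal to Theorem~\ref{thmintro:non-uniform-acylindricity}, to the defining property of $\hat X$, or to a standard relationship between loxodromicity and WPD in non-uniformly acylindrical actions.
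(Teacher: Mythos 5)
Your decomposition differs from the paper's, which proves the cycle $(1)\Rightarrow(2)\Rightarrow(3)\Rightarrow(1)$ via Propositions \ref{prop:loxodromic_implies_wpd}, \ref{prop:wpd_implies_strongly_contracting} and \ref{prop:strongyl_contracting_implies_wpd}. Your handling of $(2)\Rightarrow(1)$ (WPD elements are loxodromic by definition), of $(1)\Rightarrow(2)$ (a loxodromic element in a non-uniformly acylindrical action is WPD, applying Theorem \ref{thm:non-uniform-action-full} to the pair $(x_0,g^Nx_0)$ --- this is non-circular since that theorem is proved independently), and of $(3)\Rightarrow(1)$ (which is Proposition \ref{prop:strongyl_contracting_implies_wpd}) is all correct.

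The gap is in $(1)\Rightarrow(3)$, at the sentence ``Hence the image of $\gamma_n$ in $\hat X$ is a quasi-geodesic with constants depending only on $\lambda,C,K$.'' Knowing that $\hat\gamma_n$ is $1$-Lipschitz of length at most $K\abs{n}$ and has endpoints at $\hat d$-distance at least $\lambda\abs{n}-C$ says nothing about intermediate pairs of points: since $\lambda\le K$, there is linear-in-$n$ slack, so a subsegment of $\gamma_n$ of length comparable to $n$ could be collapsed to $\hat d$-diameter $1$ (e.g.\ by running through an anti-contracting region) without violating either estimate. The lower quasi-geodesic bound for \emph{all} parameters $s,t$ is exactly what Lemma \ref{lemma:quasi-geodesic_image_implies_strongly_contracting} consumes --- its proof selects interior points $x,x'$ with $\hat d(x,x')=2$ and needs $d(x,x')\le 3Q^2$ --- so without it you have no uniform contraction data to feed into the Arzel\`a--Ascoli limit, and the extracted bi-infinite geodesic has no useful geometric quality (as you yourself warn). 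The paper supplies the missing control in two different ways: Proposition \ref{prop:wpd_implies_strongly_contracting} first invokes WPD $\Rightarrow$ Morse (Sisto), so that a geodesic line fellow-travels the orbit $\{g^ix\}$, whose points are uniformly $\hat d$-separated by loxodromicity, which yields the lower bound at every parameter; Proposition \ref{lemma:loxodromic_implies_strongly_contracting} instead uses the quadrangle estimates of Lemma \ref{lemma:quadrangle-estimates} to show directly that the geodesics $[x,g^Nx]$ track the orbit in $X$. You need one of these mechanisms (or an equivalent) before you may assert that the $\gamma_n$ are uniformly strongly contracting.
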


In \cite{BCKM:recog_space} the notion of recognizing spaces for stable subgroups is introduced (see Definition \ref{def:recognizing-space}). Roughly speaking, a hyperbolic space $X$ is a recognizing space for a group $G$ if all stable subgroups of $G$ embed quasi-isometrically into $X$ in a natural way. The theorem below states that under nice enough assumptions, the contraction space is a universal recognizing space and that while the action of $G$ on the contraction space might not be acylindrical, $G$ acts acylindrically along its stable subgroups.

\begin{thmintro}\label{intro:stable}
    If a finitely generated group $G$ acts geometrically on a Morse-dichotomous space $X$, then the contraction space $\hat{X}$ is a universal recognizing space. 
    Moreover if $H\leq G$ is a stable subgroup, then $G$ acts acylindrically along the orbit of $H$ in $\hat{X}$.
\end{thmintro}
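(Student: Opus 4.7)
The plan is to deduce both statements from two ingredients already developed in the paper: the detection property of the contraction space (strong contraction in $X$ corresponds to being a quasi-geodesic in $\hat{X}$), and the non-uniform acylindricity of $G \acts \hat{X}$ given by Theorem \ref{thmintro:non-uniform-acylindricity}.

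For the first assertion, let $H \leq G$ be a stable subgroup. Since $G \acts X$ geometrically, an orbit map $H \to X$ sends a Cayley geodesic of $H$ to a uniform Morse quasi-geodesic in $X$; this is essentially the definition of stability. Because $X$ is Morse-dichotomous, such a Morse quasi-geodesic is uniformly strongly contracting. Applying the detection property of $\hat{X}$, extended from geodesics to quasi-geodesics by the usual fellow-travelling argument in hyperbolic spaces, yields that the composition $H \to X \to \hat{X}$ is a quasi-isometric embedding into a hyperbolic space. This is precisely the content of $\hat{X}$ being a recognizing space for $H$; since $H$ was arbitrary, $\hat{X}$ is universal.

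For the acylindricity-along-the-orbit assertion, I would open up the non-uniform acylindricity of Theorem \ref{thmintro:non-uniform-acylindricity}. Non-uniformity means that for every pair $x,y \in \hat{X}$ and every $\eps > 0$ there exist $R, N$ depending on $x,y$ and $\eps$ that control the count of approximate translations; the dependence on $x,y$ should pass through some geometric invariant of a geodesic joining them in $X$, for instance its strong-contraction gauge. The key point is that on the orbit of a stable subgroup $H$, any two orbit points are joined in $X$ by a uniformly strongly contracting geodesic, so this invariant is uniformly bounded along the orbit. Substituting a uniform bound into the non-uniform acylindricity statement upgrades it to genuine acylindricity restricted to the orbit of $H$.

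The main obstacle will be isolating the precise dependence of the non-uniform acylindricity constants on the chosen pair of points in the proof of Theorem \ref{thmintro:non-uniform-acylindricity}, and verifying that this dependence is governed by a quantity such as a strong-contraction modulus, which is uniformly bounded along the orbit of a stable subgroup. This requires inspecting the internals of Theorem \ref{thmintro:non-uniform-acylindricity} rather than using it as a black box, and carefully aligning its parameter-dependence with the uniform Morse/contraction gauge provided by stability.
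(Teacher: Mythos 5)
Your first paragraph is essentially the paper's argument for the recognizing-space statement: stability gives a uniform Morse gauge for geodesics between orbit points of $H$, Morse-dichotomy converts this into a uniform contraction constant, and Lemma \ref{lemma:contracting_implies_quasi_geodesic} then makes $\widehat{[x,y]}$ a uniform quasi-geodesic in $\hat{X}$ for all orbit points $x,y$, so the orbit map is a quasi-isometric embedding. (The paper applies the lemma directly to geodesics between orbit points rather than to images of Cayley geodesics, which avoids having to extend the detection statement to quasi-geodesics; this difference is cosmetic.)

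The second part has a genuine gap. In the proof of Theorem \ref{thm:non-uniform-action-full} the dependence on the pair $(x,y)$ does enter through the middle separator $r_0$ of Lemma \ref{lemma:quadrangle-estimates}, and you are right that $r_0$ is uniformly bounded along the orbit of $H$ (by uniform quadrangle-contraction of $[x,y]$). But uniformizing $r_0$ does not upgrade the conclusion to acylindricity: Lemma \ref{lemma:quadrangle-estimates} only gives that $[gx,gy]$ passes within distance $r_0$ of $[x,y]$ \emph{somewhere}, and the number of $g\in G$ with $d(g\cdot[y,x],[x,y])\leq r_0$ is finite by properness but grows roughly linearly with $d(x,y)$ --- there is no uniform $n$. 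This is precisely why that theorem yields only non-uniform acylindricity even when $r_0$ is uniform. The missing step, which is how the paper proves Theorem \ref{intro:quasi-geodesics-gen}, is to \emph{locate} the coincidence point: since $\widehat{[x,y]}$ and $g\cdot\widehat{[y,x]}$ are uniform $Q$-quasi-geodesics in $\hat{X}$ (again by uniform contraction along the orbit), the $\hat{d}$-estimates placing the near-intersection point at controlled $\hat{d}$-distance from $x$ and from $gx$ convert into $d$-estimates, whence $d(x,gx)\leq D'$ for a uniform $D'$; only then does properness give a uniform count. So the quasi-isometric-embedding conclusion of the first part must be fed into the acylindricity argument itself, rather than a uniform separation constant being substituted into Theorem \ref{thmintro:non-uniform-acylindricity} as a black box.
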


The theorem below is a weaker version of Theorem \ref{intro:stable} which holds in more generality. 

\begin{thmintro}\label{intro:quasi-geodesics}
    Let $X$ be a geodesic metric space. Let $C\geq 0$ be a constant and let $Y\subset X$ be a subset such that $[x, y]$ is $C$-contracting for all $x, y\in Y$. Then the inclusion $(Y, d)\hookrightarrow (\hat{X}, \hat{d})$ is a quasi-isometric embedding. Moreover, if $G$ is a group acting properly on $X$, then the action $G\acts \hat{X}$ is acylindrical along $Y$.  
\end{thmintro}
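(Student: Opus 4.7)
The plan for the first assertion is to prove both directions of a quasi-isometric embedding for the identity map $(Y, d) \hookrightarrow (\hat{X}, \hat{d})$. The inequality $\hat{d}(x,y) \leq d(x,y)$ on $Y$ is immediate: the contraction space is obtained from $X$ by adding length-one edges, so distances can only shrink. For the reverse direction, fix $x, y \in Y$ and a discrete $\hat{X}$-geodesic $x = p_0, p_1, \ldots, p_n = y$ realizing $\hat{d}(x,y) = n$. Set $\gamma = [x, y]$ in $X$; by hypothesis this is $C$-contracting, so the closest-point projection $\pi_\gamma$ to $\gamma$ is well-controlled. I would project the vertices $(p_i)$ onto $\gamma$ and bound the diameter of the projection of each single edge by a constant depending only on $C$ and on the threshold $K$ used in the definition of $\hat{X}$.

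For an edge inherited from $X$ this is the standard one-step projection bound for $C$-contracting geodesics. For an added edge with endpoints $u, v$ such that $[u, v]$ is anti-contracting, one must show that $\pi_\gamma(\{u,v\})$ is bounded: if instead it had large diameter, then $[u, v]$ would fellow-travel $\gamma$ for a long time, forcing $[u,v]$ to inherit a contraction bound and contradicting its anti-contracting status. Summing over the $n$ edges yields $d(x, y) = \diam(\pi_\gamma(\{p_0, \ldots, p_n\})) \lesssim n = \hat{d}(x, y)$, completing the quasi-isometric embedding.

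For the second assertion, fix $\epsilon > 0$ and $x, y \in Y$ with $\hat{d}(x, y)$ sufficiently large, and consider $g \in G$ with $\hat{d}(x, gx), \hat{d}(y, gy) \leq \epsilon$. Since $G$ acts by isometries of $X$, the set $gY$ inherits the $C$-contracting property, so $[gx, gy]$ is also $C$-contracting. Iterating the one-edge projection bound along a short $\hat{X}$-path from $x$ to $gx$ shows that $\pi_\gamma(gx)$ lies within $O(\epsilon)$ of $x$ on $\gamma$, and likewise $\pi_\gamma(gy)$ lies within $O(\epsilon)$ of $y$. A Morse-type stability argument for contracting geodesics with matched-up projection data then forces $[gx, gy]$ and $\gamma$ to be Hausdorff-close in $X$, yielding $d(x, gx), d(y, gy) \leq L = L(C, \epsilon)$, and properness of $G \acts X$ bounds the number of such $g$.

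The main obstacle is the one-edge projection bound for added edges: this is the step that really exploits the choice of threshold $K$ relative to $C$, and requires a small Morse-type lemma saying that a geodesic which fellow-travels a $C$-contracting geodesic for long enough is itself sufficiently contracting to fail the added-edge criterion. I would expect this lemma to be established earlier in the paper as part of the contraction-space setup, in which case everything else reduces to bookkeeping with the projection estimate.
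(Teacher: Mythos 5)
Your treatment of the quasi-isometric embedding is correct and is essentially the paper's own argument: the ``one-edge projection bound'' you describe is precisely the content of Lemma \ref{lemma:contracting_implies_quasi_geodesic}, and the auxiliary fact you anticipate (a geodesic fellow-travelling a $C$-contracting geodesic for long enough inherits a contraction bound, hence a long thin subsegment, hence cannot be anti-contracting) is supplied by Lemmas \ref{lemma:hausdorff_contraction} and \ref{lemma:full_strong_contraction_equivalence}.

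The acylindricity half, however, has a genuine gap at the step ``a Morse-type stability argument for contracting geodesics with matched-up projection data then forces $[gx,gy]$ and $\gamma$ to be Hausdorff-close, yielding $d(x,gx)\leq L$.'' From $\hat{d}(x,gx)<R$ your projection bound gives only that $\pi_\gamma(gx)$ is $d$-close to $x$; it gives no control whatsoever on $d(gx,\gamma)$. If $gx$ sits at the end of a long ``spur'' whose closest point on $\gamma$ is $x$, all of your projection data is matched and yet the two geodesics are Hausdorff-far apart; ruling this out amounts to bounding $d(x,gx)$, which is exactly the conclusion you are after, so the step is circular. No stability statement for contracting geodesics applies here, because stability requires the \emph{endpoints} (not their projections) to be close. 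The missing ingredient is the conversion of $\hat{d}$-bounds back into $d$-bounds along a geodesic whose image in $\hat{X}$ is a quasi-geodesic, i.e.\ the first half of the theorem. This is how the paper closes the argument: quadrangle-contraction of $[x,y]$ applied to $([x,y],[y,gy],g\cdot[y,x],[gx,x])$ produces a point $p\in g\cdot[y,x]$ with $d(p,m)\leq r$ for a point $m\in[x,y]$ with $\hat{d}(x,m)=D/2$ (the sides $[x,gx]$ and $[y,gy]$ are excluded because every point on them is $\hat{d}$-close to $x$ resp.\ $y$, hence $d$-far from $m$ since $d\geq\hat{d}$); then, because both $\widehat{[x,y]}$ and $g\cdot\widehat{[y,x]}$ are $Q$-quasi-geodesics in $\hat{X}$, the estimates $\hat{d}(x,m)=D/2$ and $\hat{d}(gx,p)\leq D/2+R+r$ convert into $d(x,m)$ and $d(gx,p)$ bounded, whence $d(x,gx)\leq D'$ and properness finishes. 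Your outline can be repaired along the same lines — a point $z\in[gx,gy]$ that is $d$-close to $\gamma$ does exist by the bounded-geodesic-image property of $\gamma$, and $d(gx,z)$ is then controlled because $\hat{d}(gx,z)$ is and $\widehat{[gx,gy]}$ is a quasi-geodesic — but as written the conclusion does not follow.
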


Lastly, we show that unboundedness of the contraction space comes from strongly contracting rays.

\begin{thmintro}\label{thmintro:diameter_dich}
    Let $\rho> 0$ be a constant. There exists a constant $\Delta = \Delta(\rho)$ such that the following holds. If $X$ is a geodesic metric space whose isometry group $\mathrm{Isom}(X)$ has $\rho$-dense orbit, then one of the following holds
    \begin{enumerate}
        \item \textbf{Uniform Boundedness:} $\widehat{\diam}(\hat{X})\leq \Delta$. 
        \item \textbf{Unboundedness:} $\hat{X}$ is unbounded. 
    \end{enumerate}
    Moreover, $\widehat{\diam}(\hat{X})$ is unbounded if and only if $X$ contains a strongly-contracting geodesic ray or equivalently $\mathrm{Isom}(X)\acts X$ contains a strongly-contracting element.
\end{thmintro}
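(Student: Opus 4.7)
The plan is to establish the three equivalent characterizations of unboundedness --- existence of a strongly-contracting isometry, of a strongly-contracting geodesic ray, and unboundedness of $\hat X$ --- cyclically, and to observe that the critical implication ``unbounded $\Rightarrow$ strongly-contracting isometry'' in fact holds already once the diameter of $\hat X$ exceeds a suitable $\Delta(\rho)$, which yields the dichotomy.

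For the two easy directions: if $g \in \mathrm{Isom}(X)$ is strongly contracting, then a $\langle g \rangle$-orbit is a bi-infinite strongly-contracting quasi-axis in $X$, from which a strongly-contracting ray is extracted by standard Morse/quasi-geodesic arguments; and if $X$ contains a strongly-contracting geodesic ray then, by the defining property of the contraction space already used earlier in the paper, its parametrized image is a quasi-geodesic in $\hat X$, so $\hat X$ has infinite diameter.

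The main content is the implication that if the diameter of $\hat X$ exceeds a suitable $\Delta = \Delta(\rho)$, then $\mathrm{Isom}(X)$ contains a strongly-contracting element. Fix a basepoint $o \in X$ and a $\rho$-dense $\mathrm{Isom}(X)$-orbit of $o$. Since $X \to \hat X$ is $1$-Lipschitz, $\rho$-density produces $h \in \mathrm{Isom}(X)$ with $\hat d(o, h(o)) > \Delta - 2\rho$. The key quantitative lemma I would establish is: for every target length $L$ there exists $\Delta$ so that $\hat d(o, h(o)) > \Delta$ forces the $X$-geodesic $[o, h(o)]$ to contain a $K$-contracting subsegment $\sigma$ of length at least $L$, with $K$ a \emph{uniform} constant. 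This should follow from the construction of $\hat X$: if $[o, h(o)]$ admitted no long uniformly contracting subsegment, then anti-contracting pairs along it would provide a covering by a bounded number of length-one edges in $\hat X$, contradicting the large $\hat d$-distance.

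Given $\sigma$, I would apply the standard concatenation lemma for strongly-contracting geodesics. By $\rho$-density, choose $g \in \mathrm{Isom}(X)$ sending the initial endpoint of $\sigma$ to within $\rho$ of its terminal endpoint; provided $L$ is chosen large compared to $K$ and $\rho$, the chain $\bigcup_{n \in \mathbb{Z}} g^n \sigma$ is a bi-infinite strongly-contracting quasi-geodesic and the orbit $\{g^n(o)\}_{n \in \mathbb{Z}}$ fellow-travels it, so $g$ is a strongly-contracting isometry. Picking $\Delta = \Delta(\rho)$ so that the extracted $L$ exceeds the concatenation lemma's threshold yields the explicit uniform bound. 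The main obstacle is the extraction lemma with a \emph{uniform} $K$ independent of $L$: a version where $K$ were allowed to grow with $L$ would fail to feed into the concatenation lemma. I expect this uniformity to be available from the technical setup in Section~\ref{sec:construction}, but it is where the bulk of the verification lies.
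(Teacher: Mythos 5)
Your two ``easy'' directions are fine and match the paper (strongly-contracting ray $\Rightarrow$ unbounded is exactly Corollary \ref{corollary:unbounded_if_strongly_contracting}). But the core of your plan has a genuine gap, and it also does more work than the dichotomy requires. First, the dichotomy itself does not need any extraction of a contracting element: the paper's Proposition \ref{prop:diameter_dichotomy} proves it by a soft doubling argument --- if $\widehat{\diam}(\hat{X}) = D$ is finite but larger than $\Delta = 4\delta_0+2\rho+9$, take $x,y$ with $\hat d(x,y)\geq D-2$, use $\rho$-density to place a translate $gx$ near the midpoint of $[x,y]_K$, and thin triangles force $\hat d(x,gy)>D$, a contradiction. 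Only hyperbolicity of $\hat X$ and coboundedness are used. By routing the dichotomy through ``large diameter $\Rightarrow$ contracting isometry'' you make the quantitative part hostage to your extraction lemma.

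Second, the extraction lemma as you state it --- $\hat d(o,h(o))$ large forces a long $K$-contracting subsegment of $[o,h(o)]$ with \emph{uniform} $K$ --- is not available from the construction, and your justification for it is where the argument breaks. The negation of ``$(x,y)$ is $K$-anti-contracting'' only yields \emph{some} $r\geq 0$ and a subsegment that is $K(r)$-long and $r$-thin; $r$ is completely uncontrolled. So a geodesic can fail to be covered by length-one edges in $\hat X$ while admitting no uniformly contracting subsegment whatsoever: every witness of separation may have enormous thinness constant. (Contrast Lemma \ref{lemma:quasi-geodesic_image_implies_strongly_contracting}, where the bound $r\leq Q^2$ is extracted precisely because the quasi-geodesic hypothesis bounds $d(x,x')$ for points at $\hat d$-distance $2$, hence bounds $K(r)$ and so $r$.) The paper sidesteps this by never extracting a segment directly: it invokes the Gromov classification (a cobounded action on an unbounded hyperbolic space has a loxodromic, Lemma \ref{lemma:classification_hyperbolic_actions}) to get $g\in\mathrm{Isom}(X)$ loxodromic on $\hat X$, and then Proposition \ref{lemma:loxodromic_implies_strongly_contracting} converts this to strong contraction in $X$; there the needed uniformity of the separation constant $r_0$ comes from the $\langle g\rangle$-periodicity (a single middle separator of $x$ and $g^kx$ is translated along the whole axis), not from the metric space alone. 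Your concatenation step has a secondary issue of the same flavour: choosing $g$ that merely sends the initial endpoint of $\sigma$ near its terminal endpoint does not prevent $g\sigma$ from backtracking along $\sigma$, so the chain $\bigcup_n g^n\sigma$ need not be a quasi-geodesic without additional alignment control. I would recommend replacing your main implication with the paper's two-step route.
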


We want to highlight the following structure results which are essential in the proof of the above theorems and also highlight the useful properties of the contraction space.

\begin{thmintro}\label{thmintro:structure-results}
    Let $X$ be a geodesic metric space and let $\gamma : I\to X$ be a geodesic.
    \begin{enumerate}
        \item The parametrized image $\hat{\gamma}$ of $\gamma$ in the contraction space $\hat{X}$ is a quasi-geodesic if and only if $\gamma$ is strongly contracting. \label{prop:structure-quasi-geodesics}
        \item Let $x\in X$ be a point and let $p$ and $q$ be points on $\gamma$ closest to $x$ with respect to the metrics $d$ and $\hat{d}$ respectively. We have that $\hat{d}(p, q)< 17$.\label{prop:closest-points}
        \item Let $G$ be a group acting on $X$. If $g$ acts loxodromically on $\hat{X}$, then $g$ acts strongly-contracting on $X$.
    \end{enumerate}
\end{thmintro}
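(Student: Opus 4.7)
The plan is to prove the three parts in order: part (1) is the framework, part (2) is a technical projection lemma which could also be deduced by running the same style of argument, and part (3) then reduces to part (1).

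For part (1), I would first dispatch the easy direction. If $\gamma$ is not strongly contracting, then for the threshold constant $C$ used in the construction of $\hat X$ there exist $p,q\in\gamma$ with $d(p,q)$ arbitrarily large but $[p,q]$ not $C$-contracting. By definition of the contraction space $(p,q)$ is anti-contracting, so it is joined by an edge of length at most $1$; hence $\hat d(p,q)\leq 1$ while $d(p,q)$ is unbounded, ruling out the required quasi-geodesic inequality for $\hat\gamma$. Since the inclusion is always $1$-Lipschitz ($\hat d\leq d$), the quasi-geodesic property is equivalent to a linear lower bound on $\hat d$ restricted to $\gamma$.

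The reverse direction is the technical core and my main obstacle. Assuming $\gamma$ is $C$-contracting, I want $\hat d(p,q)\geq \tfrac{1}{K}d(p,q)-L$ for $p,q\in\gamma$. I would take a near-optimal $\hat X$-path from $p$ to $q$ and decompose it into $X$-subsegments and shortcut edges joining anti-contracting pairs $(a,b)$. For each such shortcut edge, strong contraction of $\gamma$ ought to force the closest-point projections of $a$ and $b$ onto $\gamma$ to be uniformly $d$-close: otherwise the $d$-geodesic $[a,b]$ would fellow-travel $\gamma$ on a long segment and inherit good contraction, contradicting that $(a,b)$ was anti-contracting. Thus each shortcut edge advances only a bounded amount along $\gamma$, while each $X$-subsegment advances at most its own length. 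Summing both contributions yields the desired lower bound. The delicate point is making the ``bounded advance per shortcut'' estimate quantitative from the contraction constant alone.

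For part (2), let $p$ be the $d$-closest and $q$ the $\hat d$-closest point on $\gamma$ to $x$, so that $\hat d(x,q)\leq \hat d(x,p)\leq d(x,p)\leq d(x,q)$. I would split into two cases according to whether the $d$-geodesic $[x,p]$ is anti-contracting. If it is, $\hat d(x,p)\leq 1$ and hence $\hat d(p,q)\leq \hat d(p,x)+\hat d(x,q)\leq 2$. If it is not, then $[x,p]$ is $C$-contracting, and any near-optimal $\hat X$-path from $x$ to $q$ must emerge onto $\gamma$ at a point whose $d$-projection is close to $p$, because otherwise the triangle with vertices $x,p,q$ and the shortcuts involved would contradict either the contracting property of $[x,p]$ or the minimality of $p$. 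Tracking the finitely many elementary bounds---the unit length of a shortcut edge, the projection constant, and a small slack---produces the explicit bound $\hat d(p,q)<17$.

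For part (3), I would use part (1). Given $g$ loxodromic on $\hat X$, the orbit $(g^n x_0)_{n\in\Z}$ is a $\hat d$-quasi-geodesic. I would construct a $g$-invariant geodesic line $\gamma\subset X$ by concatenating $d$-geodesics $[g^n x_0, g^{n+1}x_0]$ and passing to a limit / applying a standard straightening for an infinite-order isometry with unbounded orbits (note $g$ has unbounded $d$-orbit because $\hat d\leq d$). By $g$-equivariance, $\hat\gamma$ lies at bounded $\hat d$-Hausdorff distance from the loxodromic orbit and is therefore a quasi-geodesic in $\hat X$. Part (1) then yields that $\gamma$ is strongly contracting, which is what it means for $g$ to act strongly contracting on $X$.
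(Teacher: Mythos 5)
Your forward direction of part (1) --- each anti-contracting shortcut advances the projection to $\gamma$ by a bounded amount, because otherwise $[a,b]$ would fellow-travel $\gamma$ and inherit contraction --- is essentially the paper's Lemma \ref{lemma:contracting_implies_quasi_geodesic}, and the quantitative step you flag as delicate is exactly where the paper invokes quadrangle-contraction together with Lemma \ref{lemma:hausdorff_contraction} and Corollary \ref{cor:closet_point_projection_quadrangles}. The rest of the proposal, however, rests on a recurring false dichotomy: ``either $(x,y)$ is anti-contracting or $[x,y]$ is $C$-contracting for the threshold constant of the construction.'' There is no such threshold constant. The space is built from the gauge $K(r)=10r+1$, and $(x,y)$ is anti-contracting only if \emph{no} subsegment of \emph{any} geodesic from $x$ to $y$ of length at least $10r+1$ is $r$-thin, for \emph{every} $r$. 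A geodesic that is not $C$-contracting for any $C$ can still contain long thin subsegments, so its endpoints need not be anti-contracting; and a non-anti-contracting pair is merely ``separated'' at some scale, which is much weaker than uniform contraction of $[x,y]$. This breaks the direction of (1) you call easy --- non-contraction of $\gamma$ does not ``by definition'' produce anti-contracting pairs on $\gamma$ at arbitrarily large distance --- and it breaks the second case of your argument for (2). In the paper both are genuine arguments: the converse direction of (1) is Lemma \ref{lemma:quasi-geodesic_image_implies_strongly_contracting}, proved by using the quasi-geodesic hypothesis to locate two nearby separated points of $\gamma$ and exploiting the pinch point of a witness to verify bounded geodesic image; and (2) is Lemma \ref{lemma:closest_point_projections}, a two-level pinch-point argument, not a routine bookkeeping of ``a few elementary bounds.''

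Part (3) has a further gap, and your stated logical order is reversed. Knowing that the limit geodesic $\lambda$ is at bounded $\hat d$-Hausdorff distance from the loxodromic orbit does not make the arc-length parametrized path $\hat\lambda$ a quasi-geodesic: one must relate the $d$-parameter along $\lambda$ to $\hat d$, and for that one needs $\lambda$ to lie at bounded \emph{$d$}-Hausdorff distance from the orbit, which is not automatic (a point can be $\hat d$-close yet $d$-far from an orbit point). The paper extracts this control from Lemma \ref{lemma:quadrangle-estimates}, which in turn depends on the closest-point statement (2); so (2) is a necessary input to (3) rather than a dispensable side remark, and part (3) does not simply ``reduce to part (1).''
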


\subsection*{On Petyt-Zalloum's work} In upcoming work \cite{PZ:walls}, Petyt-Zalloum generalize Sageev's construction \cite{Sageev:construction} for a non-discrete set of walls. In particular, given a geodesic metric space $X$, they construct a hyperbolic space $\tilde{X}$ such that the following hold. 
\begin{itemize}
    \item There is a coarsely Lipschitz, $\mathrm{Isom}(X)$--invariant map $p : X \to \tilde{X}$. 
    \item If $\gamma: I\to X$ is a geodesic, then $p(\gamma)$ is a quasi-geodesic if and only if $\gamma$ is strongly contracting. This is analogous to a combination of Lemma \ref{lemma:quasi-geodesic_image_implies_strongly_contracting} and Lemma \ref{lemma:contracting_implies_quasi_geodesic}.
    \item If a group $G$ acts properly on $X$ and $g\in G$, then $g\acts \tilde{X}$ loxodromic, $g\acts \tilde{X}$ WPD and $g\acts X$ strongly-contracting are equivalent. This is analogous to Theorem \ref{thmintro:wpd-lox-contr-equivalence}, and can be used to prove an analogue of the universal WPD action part of Theorem \ref{thmintro:non-unifromv2}.
    \item If $G$ acts coboundedly on $X$ and $X$ is Morse-dichotomous, then a subgroup $H\leq G$ is stable if and only its orbit in $\tilde{X}$ is quasi-isometrically embedded. This is a similar but stronger result than Theorem \ref{intro:stable}.
\end{itemize}

Their constructions also allow them to prove other results, which are unrelated to results in this paper. For example (with Spriano) they show that if $X$ is Morse-dichotomous and not hyperbolic, then every group acting coboundedly on $X$ has an infinite-dimensional space of quasi-morphisms.  

The main difference between the two constructions is that Petyt-Zalloum use and develop the machinery of spaces with walls (also leading to potential other applications, which are not about strong contraction), while in this paper, the results follow more directly from strong-contraction. 

\subsection*{Outline} In Section \ref{sec:prelim} we give background on strong-contraction and acylindrical hyperbolicity. In Section \ref{sec:construction} we define the contraction space $\hat{X}$ associated to a geodesic metric space $X$. In fact, given a geodesic metric space $X$, we construct a family of hyperbolic spaces associated to $X$. This family of hyperbolic spaces $\{\hat{X}_K\}_K$ is indexed by functions $K$ called \emph{contraction gauges}. We further prove Theorem \ref{thmintro:structure-results}\eqref{prop:structure-quasi-geodesics}. In Section \ref{sec:acylindricity} we prove Theorems \ref{thmintro:non-unifromv2} - \ref{intro:quasi-geodesics}. The key ingredient in the proofs is Theorem \ref{thmintro:structure-results}\eqref{prop:closest-points}, which allows us to control closest point projections in the contraction space. Due to Theorem \ref{thmintro:structure-results}\eqref{prop:closest-points} we can prove Lemma \ref{lemma:quadrangle-estimates}, which is the second key technical Lemma of Section \ref{sec:acylindricity}. Lemma \ref{lemma:quadrangle-estimates} states that if two opposite sides of a sufficiently large quadrangle are short in the contraction space $\hat{X}$, then the other two sides come close in $X$. In Section \ref{sec:diameter}, we prove Theorem \ref{thmintro:diameter_dich}. Lastly, in Section \ref{sec:relatively_hyperbolic_groups} we show that the contraction space of a group hyperbolic to subgroups with empty Morse boundary is naturally-isomorphic to the coned off graph. 

\subsection*{Acknowledgements} I would like to thank Harry Petyt, Davide Spriano and Abdul Zalloum for discussions about our respective projects - figuring out the similarities and differences of our constructions and their applications was very helpful. Further, I would like to thank Jacob Russell, Dawid Kielak, Matt Cordes, Ric Wade, Alexandre Martin, Antoine Goldsborough, Oli Jones and in particular my supervisor Alessandro Sisto for helpful comments and inspiring discussions both about the project and possible future applications.

\section{Preliminaries}\label{sec:prelim}

Throughout this paper, $(X, d)$ denotes a geodesic metric space. For any subspace $Y\subset X$ the closest point projection from $X$ to $Y$, if it exists, is denoted by $\pi_Y: X \to 2^Y$. Let $x\in X$. We denote the open ball of radius $r$ around $x$ by $B_r(x)$. For points $x, y\in X$ we denote by $[x, y]$ a choice of geodesic from $x$ to $y$. We assume that $[x, y] = [y, x]^{-1}$. If $\gamma$ is a geodesic and $z,z'\in \gamma$, we denote the subsegment of $\gamma$ (or its inverse) from $z$ to $z'$ by $\gamma_{[z, z']}$. In particular, if $z, z'\in [x, y]$ we denote the subsegment of $[x, y]$ from $z$ to $z'$ by $[x, y]_{[z, z']}$.
\begin{defn}
    Let $\Phi : \R_{\geq 0}\to \R_{\geq 0}$ be a function. Let $\mc A(t)$ and $\mc B(t)$ be two properties depending on a parameter $t\in \R_{\geq 0}$. We say that $\mc A$ and $\mc B$ are $\Phi$-equivalent if the following holds. If $\mc A(t)$ holds, then $\mc B(\Phi(t))$ holds and if $\mc B(t')$ holds, then $\mc A(\Phi(t'))$ holds.
\end{defn}

\begin{defn}[Quasi-geodesic]
    Let $C\geq 1$ be a constant. A continuous map $\gamma : I \to X$ is a $C$-quasi-geodesic if 
    \begin{align}
        \frac{\abs{t - s}}{C} - C \leq d(\gamma(s), \gamma(t))\leq C\abs{t - s} + C, 
    \end{align}
    for all $s, t\in I\subset \R$. 
\end{defn}

\subsection{Morse and strongly contracting (quasi-)geodesics}

In this section, we recall the definition of Morse (quasi-)geodesics and strongly-contracting quasi-geodesics. We then recall key properties of Morse (resp strongly-contracting) quasi-geodesics. For a more detailed background on properties of Morse (resp strongly-contracting) quasi-geodesics, we recommend \cite{C:Morse, CS:contracting} and \cite{ACHG:contraction_morse_divergence}.

\subsubsection{Morse (quasi-)geodesics}

\begin{defn}
    A function $M: \R_{\geq 1}\times \R_{\geq 0}\to \R_{\geq 0}$ is called a \emph{Morse gauge}, if it is non-decreasing and continuous in the second coordinate.
\end{defn}

\begin{defn}[Morseness]
    A quasi-geodesic $\gamma$ is called \emph{$M$-Morse} for some Morse gauge $M$ if every $C$-quasi-geodesic $\lambda$ with endpoints on $\gamma$ stays in the closed $M(C, C)$-neighbourhood of $\gamma$. A quasi-geodesic is called \emph{Morse} if it is $M$-Morse for some Morse gauge $M$.
\end{defn}

The following lemma is a well-known fact stating that Morse quasi-geodesics are at bounded Hausdorff distance from geodesics, proofs can be found for example in \cite{CS:contracting}. 

\begin{lemma}\label{lemma:morse-traced-by-geodesic}
    Let $M$ be a Morse gauge and $C\geq 1$ a constant. There exists a constant $D$ such that the following holds. Let $\lambda$ be an $M$-Morse $C$--quasi-geodesic. There exists a geodesic $\gamma$ such that the Hausdorff distance between $\lambda$ and $\gamma$ is at most $D$. 
\end{lemma}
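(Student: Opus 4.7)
The plan is to reduce to the case of a bounded parametrizing interval and then extend to unbounded intervals via a limiting argument. Suppose first $\lambda : [a, b] \to X$ is an $M$-Morse $C$-quasi-geodesic with bounded domain, and take $\gamma$ to be a geodesic from $\lambda(a)$ to $\lambda(b)$. The containment $\gamma \subseteq N_{M(1, 1)}(\lambda)$ is immediate from the Morse condition, since $\gamma$ is itself a $1$-quasi-geodesic whose endpoints lie on $\lambda$.

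The substantive part of the argument is the reverse containment $\lambda \subseteq N_D(\gamma)$. I would parametrize $\gamma$ by arc-length on $[0, L]$, where $L = d(\lambda(a), \lambda(b))$, and use the first containment to pick for each $u \in [0, L]$ some $s(u) \in [a, b]$ with $d(\gamma(u), \lambda(s(u))) \leq M(1, 1)$, fixing $s(0) = a$ and $s(L) = b$. For $u, u'$ with $|u - u'| \leq 1$, the triangle inequality together with $d(\gamma(u), \gamma(u')) \leq 1$ yields $d(\lambda(s(u)), \lambda(s(u'))) \leq 2 M(1, 1) + 1$, and the lower $C$-quasi-geodesic bound on $\lambda$ upgrades this to $|s(u) - s(u')| \leq K$ for a constant $K = K(C, M)$. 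Taking a sampling $0 = u_0 < u_1 < \dots < u_N = L$ with consecutive gaps at most $1$, the values $s(u_0), \dots, s(u_N)$ travel from $a$ to $b$ with consecutive jumps bounded by $K$, so for every $t \in [a, b]$ some $s(u_n)$ lies within $K$ of $t$. Applying the upper quasi-geodesic bound then gives $d(\lambda(t), \lambda(s(u_n))) \leq C K + C$, and hence $d(\lambda(t), \gamma) \leq C K + C + M(1, 1) =: D$, completing the bounded case.

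For an unbounded domain $I$, I would exhaust $I$ by bounded subintervals $[a_n, b_n] \uparrow I$, apply the bounded case to each restriction to obtain geodesics $\gamma_n$ at Hausdorff distance at most $D$ from $\lambda|_{[a_n, b_n]}$, and extract a limiting geodesic via a standard Arzel\`{a}--Ascoli / diagonal argument after rebasing the $\gamma_n$ at a common basepoint; the uniform Hausdorff bound passes to the limit.

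The main obstacle is the reverse containment in the bounded case: the Morse condition is one-sided, controlling only quasi-geodesics with endpoints on $\lambda$, and a small amount of work is needed to convert this into a two-sided Hausdorff bound. The argument above does so by combining a $1$-dense covering of $\gamma$ by points in the image of $\lambda$ with the quasi-geodesic lower bound, which forbids $\lambda$ from making long hidden excursions between control points. The unbounded case introduces the usual technical wrinkle requiring some local compactness of $X$ (or an analogous ingredient) so that a limit geodesic exists.
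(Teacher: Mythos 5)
Your argument takes a genuinely different route from the paper, which offers no self-contained proof at all: it simply cites \cite{CS:contracting} (Lemma 2.5(3) for segments, the proof of Lemma 2.10 for rays, and ``analogous'' for bi-infinite quasi-geodesics). Your treatment of the segment case is correct and complete: the containment $\gamma\subseteq \mc N_{M(1,1)}(\lambda)$ is indeed immediate since a geodesic is a $1$-quasi-geodesic with endpoints on $\lambda$, and your reverse containment --- covering $\gamma$ by $1$-separated sample points, pulling them back to parameters $s(u_i)$ whose consecutive gaps are bounded by $K = C(2M(1,1)+1+C)$ via the lower quasi-geodesic inequality, and then invoking a discrete intermediate-value argument to place every $t\in[a,b]$ within $K$ of some $s(u_i)$ --- is exactly the standard mechanism for converting the one-sided Morse condition into a two-sided Hausdorff bound, and every step checks out with $D = CK + C + M(1,1)$. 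This is arguably a more useful proof than the paper's, since it is elementary and works verbatim in any geodesic metric space for segments.

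The one genuine weak point is the unbounded case, and you have correctly diagnosed it yourself: the Arzel\`a--Ascoli extraction of a limiting geodesic requires properness (or some substitute such as CAT(0) convexity, completeness of an appropriate function space, or an ultralimit/rebasing trick), and properness is not among the hypotheses --- the paper's standing assumption is only that $X$ is a geodesic metric space. As written, your limiting step does not go through in that generality. To be fair, the paper inherits the same issue: Lemma 2.10 of \cite{CS:contracting} is proved in the CAT(0) setting, where convexity of the metric makes the geodesics $[\lambda(0),\lambda(n)]$ converge without any compactness, and the paper does not explain how to adapt this to an arbitrary geodesic space. So your proposal matches the paper's actual level of rigor on the ray and bi-infinite cases while being strictly more explicit on the segment case; if you want a complete argument for rays in full generality you would need to either add a properness hypothesis or replace the compactness extraction with a construction of the limit geodesic that uses the uniform fellow-travelling of the $\gamma_n$ directly.
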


\begin{proof}
    For quasi-geodesic segments, this is the statement of Lemma 2.5 (3) of \cite{CS:contracting}. For quasi-geodesic rays, this is proven in the proof of Lemma 2.10 of \cite{CS:contracting}. For bi-inifinite quasi-geodesics, the proof is analogous to the proof of Lemma 2.10 of \cite{CS:contracting}.
\end{proof}

\subsubsection{Strongly-contracting (quasi-)geodesics}

\begin{defn}[Strongly-contracting]
    Let $D\geq 0$ be a constant. We say that a geodesic $\gamma$ is \emph{$D$-contracting} if for every point $x\in X$, $\diam(\pi_\gamma(B_{d(x, \gamma)} (x)))\leq D$. A geodesic is called \emph{strongly-contracting} if it is $D$-contracting for some constant $D$. 
\end{defn}

As shown in \cite[Theorem 7.1]{ACHG:contraction_morse_divergence}, being strongly-contracting is equivalent to satisfying the bounded geodesic image property. 

\begin{lemma}[Direct consequence of \cite{ACHG:contraction_morse_divergence} Theorem 7.1]\label{lemma:equivalence_strong_contraction1}
    There exists a function $\Phi_{con}:\R_{\geq 0}\to \R_{\geq 0}$ such that for all quasi-geodesics $\gamma$, the following properties are $\Phi_{con}$--equivalent: 
    \begin{enumerate}
        \item \textbf{Bounded geodesic image property:} Any geodesic $\lambda$ with $d(\gamma, \lambda)\geq C$, satisfies $\diam(\pi_\gamma(\lambda))\leq C$. We say that $\gamma$ has \emph{$C$-bounded-geodesic image}.
        \item \textbf{Strong-contraction:} The geodesic $\gamma$ is $C$-contracting.
    \end{enumerate}
\end{lemma}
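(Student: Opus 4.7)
The plan is to deduce this equivalence directly from Theorem 7.1 of \cite{ACHG:contraction_morse_divergence}, which already compiles several equivalent characterizations of strong contraction for quasi-geodesics, including both the ball-projection form (2) and the bounded geodesic image form (1). The proof therefore reduces to verifying that the translation functions provided by that theorem can be packaged into a single universal $\Phi_{con}$ which does not depend on $\gamma$.

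For the direction (2) $\Rightarrow$ (1), I would argue as follows. Assume $\gamma$ is $D$-contracting, and let $\lambda$ be a geodesic with $d(\lambda, \gamma) \geq C$. Cover $\lambda$ by balls $B_C(z)$ centered at points $z \in \lambda$. Since $d(z, \gamma) \geq C$, each such ball lies inside $B_{d(z, \gamma)}(z)$, so by $D$-contraction its image under $\pi_\gamma$ has diameter at most $D$. Because consecutive balls overlap, the restriction of $\pi_\gamma$ to $\lambda$ is coarsely Lipschitz with constants depending only on $D$ and $C$. Combining this with the fact that $\lambda$ is a geodesic (so that the length of any sub-arc of $\lambda$ is bounded by the detour through $\gamma$), a standard triangle inequality argument yields a uniform bound $\diam(\pi_\gamma(\lambda)) \leq \Phi_1(D)$ depending only on $D$. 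Setting $\Phi_{con}(D) \geq \max\{\Phi_1(D), D\}$ then gives (1) with constant $\Phi_{con}(D)$.

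The reverse direction (1) $\Rightarrow$ (2) is dual: assuming $\gamma$ has $C$-bounded geodesic image, take any $x$ and any two points $y_1, y_2 \in B_{d(x, \gamma)}(x)$. If the geodesic $[y_1, y_2]$ stays at distance at least $C$ from $\gamma$, then (1) bounds $d(\pi_\gamma(y_1), \pi_\gamma(y_2)) \leq C$ directly. Otherwise $[y_1, y_2]$ enters the $C$-neighbourhood of $\gamma$, and triangle inequalities on the two sub-geodesics on either side of the entry point, together with their projections onto $\gamma$, yield a bound controlled by a function of $C$ alone. The main obstacle is bookkeeping rather than conceptual: one must check that the resulting $\Phi_{con}$ is uniform, depending neither on $\gamma$ nor on the ambient metric space, which is guaranteed because Theorem 7.1 of \cite{ACHG:contraction_morse_divergence} is itself stated in a $\gamma$-uniform way.
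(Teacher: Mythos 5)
Your main line---deferring to \cite{ACHG:contraction_morse_divergence}, Theorem 7.1---coincides with the paper's treatment: the lemma carries that attribution in its title and no proof is supplied, so the citation, together with the observation that the constants there are uniform in $\gamma$, is all that is needed. The supplementary direct sketch you give for $(2)\Rightarrow(1)$, however, does not close as written, and you should not lean on it. Covering $\lambda$ by balls of radius $C$ and applying $D$-contraction ball by ball only makes $\pi_\gamma$ restricted to $\lambda$ coarsely Lipschitz, giving $\diam(\pi_\gamma(\lambda))\leq D(\ell(\lambda)/C+1)$; substituting this into the detour inequality $d(p,q)\leq d(p,\gamma)+\diam(\pi_\gamma(\lambda))+d(q,\gamma)$ for the endpoints $p,q$ of $\lambda$ and solving for the diameter leaves a bound of the shape $d(p,\gamma)+d(q,\gamma)+2D$, which is not uniform because the endpoints of $\lambda$ may be arbitrarily far from $\gamma$. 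Upgrading this linear-in-length estimate to a genuine constant is the actual content of the implication and requires a further idea; that is precisely why the equivalence is quoted as a theorem rather than proved in the paper. (Your sketch of $(1)\Rightarrow(2)$ is fine: the first-entry-point argument does bound $\diam(\pi_\gamma(B_{d(x,\gamma)}(x)))$ by a multiple of $C$.) In short, the proposal is acceptable because it officially rests on the citation, exactly as the paper does, but the phrase ``a standard triangle inequality argument yields a uniform bound'' for $(2)\Rightarrow(1)$ papers over the one genuinely non-elementary step and should be removed or replaced by an explicit pointer to the cited proof.
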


The following lemma states that strong contraction of geodesics behaves well under taking subsegments; a proof can be found in \cite[Theorem 1.1]{EZ:contracting}. 

\begin{lemma}[Strongly-contracting subsegments]\label{lemma:subsegments}
    There exists a function $\Phi_{sub} : \R_{\geq 0}\to \R_{\geq 0}$ such that every subgeodesic $\gamma'$ of a $C$-contracting geodesic $\gamma$ is $\Phi_{sub}(C)$-contracting.
\end{lemma}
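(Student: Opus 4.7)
The plan is to work through the bounded geodesic image characterisation of Lemma \ref{lemma:equivalence_strong_contraction1}, since the BGIP transfers more transparently to subsegments of a fixed strongly-contracting geodesic than the diameter-of-projection definition does. Fix a $C$-contracting geodesic $\gamma$ and set $C_1=\Phi_{con}(C)$, so that $\gamma$ has $C_1$-BGIP. Decompose $\gamma=\gamma_1\cdot\gamma'\cdot\gamma_2$, where $\gamma'$ meets $\gamma_1,\gamma_2$ at its endpoints $\alpha$ and $\beta$.

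The technical heart of the argument is a projection-comparison estimate: for every $x\in X$ and every $q\in\pi_{\gamma'}(x)$, the point $q$ lies within a uniform distance $K=K(C)$ of the \emph{clamp} of $\pi_\gamma(x)$ to $\gamma'$. If $\pi_\gamma(x)\cap\gamma'\neq\emptyset$, then any $q\in\pi_{\gamma'}(x)$ satisfies $d(x,q)=d(x,\gamma)$ and hence $q\in\pi_\gamma(x)$, which already has diameter at most $C$. If instead $\pi_\gamma(x)\subseteq\gamma_1$ (the other case being symmetric), I apply $C_1$-BGIP to the geodesic $[x,q]$: its $\gamma$-projection contains both a point near $\pi_\gamma(x)$ and the point $q\in\gamma'\subset\gamma$, so whenever $d(\pi_\gamma(x),q)>C_1$ the geodesic $[x,q]$ must enter the $C_1$-neighbourhood of $\gamma$, and combining this with $d(x,q)\leq d(x,\alpha)$ via the triangle inequality pins down $d(q,\alpha)\leq K$.

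The proof then concludes by fixing $x\in X$ and bounding $\diam(\pi_{\gamma'}(B_{d(x,\gamma')}(x)))$ in two cases. When $\pi_\gamma(x)\in\gamma'$ one has $d(x,\gamma')=d(x,\gamma)$, so $C$-contraction of $\gamma$ controls the diameter of $\pi_\gamma(B_{d(x,\gamma')}(x))$, and the projection-comparison estimate transfers that control to $\gamma'$; when $\pi_\gamma(x)\notin\gamma'$, the projection-comparison forces every $\pi_{\gamma'}(y)$ with $y\in B_{d(x,\gamma')}(x)$ to lie within $K$ of the endpoint of $\gamma'$ on the same side as $\pi_\gamma(x)$, again yielding a uniform diameter bound. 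Taking the maximum of the two bounds defines $\Phi_{sub}(C)$.

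The main obstacle is precisely that $d(x,\gamma')$ can strictly exceed $d(x,\gamma)$, so the $C$-contraction hypothesis does not directly bound $\pi_\gamma$ on the enlarged ball $B_{d(x,\gamma')}(x)$. The projection-comparison step is the tool designed to overcome this: it shows that the remote parts of any such enlarged $\gamma$-projection collapse into bounded neighbourhoods of $\alpha$ and $\beta$ once one pulls back to $\gamma'$. Producing the correct quantitative version of that collapse, tracked carefully through the BGIP constant $C_1$, is the technically delicate point of the argument.
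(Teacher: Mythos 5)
The paper does not prove this lemma at all: it is imported as \cite[Theorem 1.1]{EZ:contracting}, so your self-contained argument is necessarily a different route. That said, your architecture --- pass to the bounded geodesic image property via Lemma \ref{lemma:equivalence_strong_contraction1}, compare $\pi_{\gamma'}$ with the nearest-point retraction of $\pi_\gamma$ onto $\gamma'$ (your ``clamp''), and split according to where $\pi_\gamma(x)$ lands --- is the standard direct proof of this fact, and the first case of your comparison estimate is fine as written (indeed $\diam(\pi_\gamma(x))\leq C$ since $x\in B_{d(x,\gamma)}(x)$).

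Two steps assert more than they prove. First, in the comparison estimate when $\pi_\gamma(x)\subseteq\gamma_1$: knowing that $[x,q]$ meets $\mc N_{C_1}(\gamma)$ \emph{somewhere} is not enough to ``pin down'' $d(q,\alpha)$; you need it to first enter that neighbourhood \emph{near} $p\in\pi_\gamma(x)$. This follows by applying BGIP to the initial segment of $[x,q]$ up to the first entry point $w$ into $\mc N_{C_1}(\gamma)$, giving $d(w,p)\leq 2C_1+C$; only then does $d(x,q)=d(x,w)+d(w,q)\geq d(x,p)+d(p,q)-\mathrm{const}$ combine with $d(x,q)\leq d(x,\alpha)\leq d(x,p)+d(p,\alpha)$ and $d(p,q)=d(p,\alpha)+d(\alpha,q)$ to yield $d(q,\alpha)\leq K$. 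Second, and more seriously, in Case B of your conclusion the projection-comparison estimate does not by itself ``force'' $\pi_{\gamma'}(y)$ to lie near $\alpha$ for $y\in B_{d(x,\gamma')}(x)$: it only places $\pi_{\gamma'}(y)$ within $K$ of the clamp of $\pi_\gamma(y)$, and a priori $\pi_\gamma(y)$ could lie deep inside $\gamma'$ or in $\gamma_2$, in which case the clamp is far from $\alpha$. You must separately show that for every $y$ with $d(x,y)<d(x,\gamma')\leq d(x,\alpha)$, any point of $\pi_\gamma(y)$ lying past $\alpha$ is within bounded distance of $\alpha$. This is true, and is proved by the same entry/exit-point BGIP argument (if $p_y\in\pi_\gamma(y)$ were far beyond $\alpha$, the geodesic $[x,y]$ would have to fellow-travel $\gamma$ from near $p$ to near $p_y$, forcing $d(x,y)\geq d(x,p)+d(p,p_y)-\mathrm{const}\geq d(x,\alpha)+d(\alpha,p_y)-\mathrm{const}$, contradicting $d(x,y)<d(x,\alpha)$), but it is a genuinely separate estimate from the pointwise projection comparison and must be stated and proved. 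With these two supplements the argument closes and produces an explicit $\Phi_{sub}$.
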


Theorem 1.4 of \cite{ACHG:contraction_morse_divergence} implies that being strongly-contracting implies being Morse, which can be used to prove further results about strongly-contracting quasi-geodesics.

\begin{lemma}[Implication of Theorem 1.4 of \cite{ACHG:contraction_morse_divergence}]\label{lem:theorem1.4}
    Let $C\geq 0$ be a constant. There exists a Morse gauge $M$ only depending on $C$ such that any $C$--contracting quasi-geodesic is $M$--Morse.
\end{lemma}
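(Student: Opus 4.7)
The plan is to derive the Morse bound from the bounded geodesic image characterization. By Lemma \ref{lemma:equivalence_strong_contraction1} (and its extension to quasi-geodesics given by Theorem 1.4 of \cite{ACHG:contraction_morse_divergence}), any $C$--contracting quasi-geodesic $\gamma$ satisfies a $C'$--bounded geodesic image property with $C' = \Phi_{con}(C)$ depending only on $C$. Fix $K\geq 1$ and a $K$--quasi-geodesic $\lambda$ with endpoints on $\gamma$; the target is to bound $D := \sup_s d(\lambda(s), \gamma)$ by a constant depending only on $C$ and $K$, since then one reads off the Morse gauge by $M(K,K) := D$.

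Let $p = \lambda(t)$ realize the deviation $D$ and, tracing $\lambda$ backwards and forwards from $p$, let $p_1 = \lambda(t_1)$ and $p_2 = \lambda(t_2)$ be the points on either side of $p$ where $\lambda$ last, respectively first, enters the $R$--neighbourhood of $\gamma$, for a threshold $R\gg C'$ to be fixed below. The upper quasi-geodesic bound $d(p,p_i)\leq K|t-t_i|+K$ together with $d(p,p_i)\geq D-R$ forces $t_2 - t_1 \geq 2(D-R-K)/K$.

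Next I would subdivide $[t_1,t_2]$ into pieces of fixed parameter-length $\delta$ with endpoints $q_0 = p_1, q_1, \dots, q_n = p_2$, so that $n\leq (t_2-t_1)/\delta + 1$ and each chord $[q_i,q_{i+1}]$ has length at most $K\delta + K$. Choosing $R$ larger than $C' + K\delta + K$ ensures that every such chord lies outside the $C'$--neighbourhood of $\gamma$, so the bounded geodesic image property gives $\diam(\pi_\gamma([q_i,q_{i+1}]))\leq C'$. Telescoping through the $n$ chords and adding the contributions $d(p_i,\pi_\gamma(p_i))\leq R$ yields $d(p_1,p_2)\leq 2R + nC'$. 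Combining with the lower quasi-geodesic bound $d(p_1,p_2)\geq (t_2-t_1)/K - K$ and the earlier lower bound on $t_2-t_1$, and picking $\delta > 2KC'$ so that $nC'$ is dominated by $(t_2-t_1)/(2K)$, produces a linear inequality in $t_2-t_1$ which forces $t_2 - t_1$, and therefore $D$, to be bounded in terms of $C$ and $K$ only.

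The main obstacle is the order of quantifiers in choosing the two parameters $\delta$ and $R$: $\delta$ must first be chosen large enough, depending on $K$ and $C'$, that the telescoped projection bound $nC'$ genuinely beats the quasi-geodesic lower bound on $d(p_1,p_2)$; only then can $R$ be chosen large enough, depending on $\delta$, $K$, and $C'$, to guarantee that each chord of length $K\delta + K$ with endpoints outside the $R$--neighbourhood of $\gamma$ stays outside the $C'$--neighbourhood. Once this is set up correctly the rest is straightforward bookkeeping; the resulting bound $D(C,K)$ is polynomial in $K$ and $C'$, hence in $K$ and $C$, and a continuous non-decreasing upper envelope of $(K,K)\mapsto D(C,K)$ gives the Morse gauge $M$.
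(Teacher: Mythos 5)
Your argument is correct, but it is genuinely different from what the paper does: the paper offers no proof of this lemma at all, treating it as a black-box consequence of Theorem 1.4 of \cite{ACHG:contraction_morse_divergence}, whereas you give a self-contained derivation from the bounded geodesic image characterization (Lemma \ref{lemma:equivalence_strong_contraction1}). Your route is the standard ``contraction implies Morse'' argument, and the quantifier discipline you highlight (fix the subdivision scale $\delta$ from $K$ and $C'$ first, then take the threshold $R$ large relative to $C' + K\delta + K$) is exactly the point where naive versions of this argument go wrong; your chain of estimates closes correctly, since $t_2-t_1\geq 2(D-R-K)/K$ on one side and $t_2-t_1\leq 2K(2R+C'+K)$ on the other forces $D$ to be bounded in terms of $C$ and $K$ only. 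Two small points to make explicit if you write this up: first, you need $\Phi_{con}$ to be uniform over all quasi-geodesics, independent of their quasi-geodesic constants --- the paper's Lemma \ref{lemma:equivalence_strong_contraction1} is stated that way, so this is consistent, but it is the one place where your proof leans on the same external theorem the paper cites; second, the first and last chords of your subdivision have an endpoint ($p_1$ or $p_2$) only known to lie \emph{within} the $R$--neighbourhood, so you should note that by continuity of $\lambda$ these entrance points lie exactly on the sphere of radius $R$ about $\gamma$ (the endpoints of $\lambda$ are on $\gamma$, so the sphere is crossed), which is what guarantees those two chords also stay $C'$--far from $\gamma$. What your approach buys is an explicit, quantitative gauge $M(K,K)$ that is visibly polynomial in $K$ and $\Phi_{con}(C)$; what the paper's citation buys is brevity and a gauge with whatever quantitative behaviour \cite{ACHG:contraction_morse_divergence} provides.
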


Using the above result, we can state Lemma \ref{lemma:morse-traced-by-geodesic} in terms of strongly-contracting quasi-geodesics, which is the statement of the Lemma below.

\begin{lemma}[Lemma \ref{lemma:morse-traced-by-geodesic} for strong-contraction]\label{lemma:sc-traced-by-geodesic}
    Let $C\geq 1$ be a constant. There exists a constant $D$ such that the following holds. Let $\lambda$ be a $C$--contracting $C$--quasi-geodesic, then there exists a geodesic $\gamma$ such that the Hausdorff distance between $\lambda$ and $\gamma$ is at most $D$. 
\end{lemma}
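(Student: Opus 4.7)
The plan is to deduce this statement as a direct corollary by chaining together the two lemmas immediately preceding it. Since the hypothesis gives a $C$-contracting $C$-quasi-geodesic, Lemma \ref{lem:theorem1.4} produces a Morse gauge $M$, depending only on $C$, such that $\lambda$ is $M$-Morse. Now $\lambda$ is simultaneously an $M$-Morse quasi-geodesic and a $C$-quasi-geodesic, which is exactly the setup of Lemma \ref{lemma:morse-traced-by-geodesic}.

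Applying Lemma \ref{lemma:morse-traced-by-geodesic} to the Morse gauge $M = M(C)$ and the quasi-geodesic constant $C$ yields a constant $D$, depending only on $M$ and $C$ (hence only on $C$), together with a geodesic $\gamma$ whose Hausdorff distance from $\lambda$ is at most $D$. This is precisely the conclusion claimed. There is essentially no obstacle here: the statement is really a repackaging, using Lemma \ref{lem:theorem1.4} as a ``translator'' from the contraction hypothesis to the Morse hypothesis, and then invoking Lemma \ref{lemma:morse-traced-by-geodesic} verbatim. The only thing worth being careful about is that the constant $D$ produced is uniform in all $C$-contracting $C$-quasi-geodesics, which follows because both intermediate constants $M(C)$ and $D(M,C)$ depend only on $C$.
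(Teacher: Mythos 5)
Your proposal is correct and matches the paper's approach exactly: the paper introduces Lemma \ref{lem:theorem1.4} precisely so that Lemma \ref{lemma:sc-traced-by-geodesic} follows by converting the contraction hypothesis into an $M(C)$-Morse hypothesis and then invoking Lemma \ref{lemma:morse-traced-by-geodesic}. The uniformity of $D$ in $C$ is handled correctly as well.
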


Lemma \ref{lemma:subsegments} and Lemma \ref{lemma:sc-traced-by-geodesic} can be used to show that quasi-geodesics close to strongly-contracting geodesics are strongly-contracting.

\begin{lemma}\label{lemma:hausdorff_contraction}
    There exists a function $\Phi_{d} : \R_{\geq 0}\to \R_{\geq 0}$ such that the following hold. 
    \begin{enumerate}
        \item Every $C$--quasi-geodesic $\lambda$ with Hausdorff distance at most $C$ from a $C$-contracting $C$--quasi-geodesic $\gamma$ is $\Phi_{d}(C)$--contracting.
        \item Every $C$--quasi-geodesic segment $\lambda$ with endpoints contained in a $C$--neighbourhood of a $C$--contracting $C$--quasi-geodesic $\gamma$ is $\Phi_{d}(C)$--contracting.
    \end{enumerate}
\end{lemma}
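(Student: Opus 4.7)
The plan is to handle part (1) first and then reduce part (2) to it. For part (1), I would replace $\gamma$ with a geodesic representative: by Lemma \ref{lemma:sc-traced-by-geodesic}, there is a geodesic $\gamma^*$ at Hausdorff distance at most $D = D(C)$ from $\gamma$, so $\lambda$ is at Hausdorff distance at most $C + D$ from $\gamma^*$. I would then transfer the bounded-geodesic-image property (Lemma \ref{lemma:equivalence_strong_contraction1}) from $\gamma$ first to $\gamma^*$ and then to $\lambda$: if a geodesic $\mu$ is sufficiently far from $\lambda$ (respectively $\gamma^*$), then it is also far from $\gamma$, and hence projects to a small-diameter set on $\gamma$; the projection of a given point to $\gamma$ and to a Hausdorff-close set differs by a bounded amount, thanks to the standard fact that coarse closest points to a strongly-contracting set are close to the actual closest point. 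Applying Lemma \ref{lemma:equivalence_strong_contraction1} in the other direction then yields that $\lambda$ is $\Phi_d(C)$-contracting.

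For part (2), I would pick $a, b \in \gamma$ within distance $C$ of the endpoints $\alpha, \beta$ of $\lambda$ and use Lemma \ref{lemma:subsegments} to see that $\gamma_{[a, b]}$ is $\Phi_{sub}(C)$-contracting. Lemma \ref{lemma:sc-traced-by-geodesic} then produces a geodesic $\gamma^*$ at bounded Hausdorff distance from $\gamma_{[a, b]}$, with endpoints $a', b'$ close to $a$ and $b$. The concatenation $\eta := [a', \alpha] \cdot \lambda \cdot [\beta, b']$ is a quasi-geodesic with the same endpoints as $\gamma^*$ and constants depending only on $C$. Morseness of $\gamma^*$ (Lemma \ref{lem:theorem1.4}) forces $\eta$ to lie in a bounded neighborhood of $\gamma^*$, and coarse Lipschitzness of $\pi_{\gamma^*}$---another consequence of strong contraction---implies that as we parametrize $\eta$, the image $\pi_{\gamma^*}(\eta)$ covers all of $\gamma^*$ up to a constant, yielding the reverse inclusion. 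Hence $\lambda$ is at bounded Hausdorff distance from the strongly-contracting geodesic $\gamma^*$, and part (1) delivers the conclusion.

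The main obstacle is the quantitative comparison of closest-point projections to Hausdorff-close strongly-contracting sets. While intuitively clear, it rests on two standard but unstated auxiliary facts: that closest-point projection onto a strongly-contracting (quasi-)geodesic is coarsely Lipschitz, and that coarse closest points on such a set are close to actual closest points. Both facts follow from the bounded-geodesic-image characterization of Lemma \ref{lemma:equivalence_strong_contraction1} by now-routine arguments, but they are the technical workhorse behind both parts and will need to be invoked (or reproved) with explicit quantitative control to yield the single function $\Phi_d$ appearing in the statement.
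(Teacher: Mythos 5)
Your proposal follows essentially the same route as the paper: part (1) is proved by passing to the bounded-geodesic-image characterization (Lemma \ref{lemma:equivalence_strong_contraction1}) and comparing closest-point projections onto the Hausdorff-close sets $\gamma$ and $\lambda$ --- the ``standard fact'' about coarse closest points that you defer is exactly the $3C+3C'$ computation the paper carries out --- and part (2) is reduced to part (1) by exhibiting a strongly-contracting geodesic at bounded Hausdorff distance from $\lambda$. The only cosmetic differences are that the paper does not bother replacing $\gamma$ by a geodesic in part (1), and in part (2) it obtains the comparison geodesic as a subsegment of a geodesic tracking all of $\gamma$ (citing Lemma 2.1 of \cite{C:Morse} together with Lemma \ref{lem:theorem1.4}) rather than building it between near-endpoints and reproving the Hausdorff-closeness via Morseness as you do; note also that Lemma \ref{lemma:subsegments} is stated for geodesics, so you should pass to the geodesic representative before taking the subsegment rather than after.
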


\begin{proof}
    (1): Let $\lambda$ be a $C$-quasi-geodesic with Hausdorff distance at most $C$ from a $C$--contracting $C$--quasi-geodesic $\gamma$. In light of Lemma \ref{lemma:equivalence_strong_contraction1} it is enough to show that $\lambda$ has the $C''$--bounded geodesic image property, where $C''$ only depends on $C$. 

    Since $\gamma$ is $C$--contracting, it has the $C'$--bounded geodesic image property for some $C'$ only depending on $C$ by Lemma \ref{lemma:equivalence_strong_contraction1}.
    
    Let $x\in X$. We next show that $d_{\mathrm{Haus}}(\pi_{\gamma}(x), \pi_{\lambda}(x))$ is bounded by $3C + 3C'$. If $d(x, \lambda)\leq C+C'$, then $d(x, \gamma)\leq 2C + C'$ and hence $d_{\mathrm{Haus}}(\pi_{\gamma}(x), \pi_{\lambda}(x))\leq (C+C') + (2C+C') = 3C + 2C'$. If $d(x, \lambda) > C+C'$, let $a\in \pi_{\lambda}(x)$. Further, let $b$ be on $[x, a]$ at distance $C+C'$ from $a$. We have that $d([x, a]_{[x, b]}, \gamma)\geq C'$. Since $\gamma$ has $C'$-bounded geodesic image, $\diam(\pi_{\gamma}([x, a]_{[x, b]}))\leq C'$. For any $c\in \pi_{\gamma}(b)$ we have that $d(c, b)\leq 2C+C'$ and hence $d(c, a)\leq 3C+2C'$. Thus $d_{\mathrm{Haus}}(\pi_{\gamma'}(x), \pi_{\lambda}(x))\leq 3C+3C'$.

    We are now ready to show that $\lambda$ hast the $C''$ bounded geodesic image property for $C'' = 7C' + 6C$. Let $\eta$ be a geodesic with $d(\eta, \lambda)\geq C'+C$. We have that $d(\eta, \gamma)\geq C'$. Since $\gamma$ has $C'$-bounded geodesic image, $\diam(\pi_{\gamma}(\eta))\leq C'$. By the argument above, 
    $d_{\mathrm{Haus}}(\pi_{\gamma}(\eta), \pi_{\lambda}(\eta))\leq 3C+3C'$ and hence $\diam(\pi_{\lambda}(\eta))\leq C' +6C+6C' = C''$, implying that $\lambda$ indeed has $C''$ bounded geodesic image. Lemma \ref{lemma:equivalence_strong_contraction1} about the equivalence of strong contraction and bounded geodesic image concludes the proof.

    (2):  Let $\lambda$ be a $C$-quasi-geodesic segment whose endpoint lie in the $C$--neighbourhood of a $C$--contracting $C$--quasi-geodesic $\gamma$. By Lemma \ref{lemma:morse-traced-by-geodesic}, there exists a geodesic $\gamma'$ with Hausdorff distance at most $C'$ from $\gamma$, which is $C''$--contracting by (1). Further $C'$ and $C''$ depend only on $C$. By \cite[Lemma 2.1]{C:Morse} combined with Lemma \ref{lem:theorem1.4}, there exists a subsegment $\gamma''\subset\gamma'$ and a constant $H$ depending only on $C''$ (and hence depending only on $C$) such that $d_{\mathrm{Haus}}(\gamma'', \lambda)\leq H$. By  Lemma \ref{lemma:subsegments} about subgeodesics of strongly-contracting geodesics the geodesic $\gamma''$ is $C'''$--contracting, where $C'''$ depends only on $C$. Now (1) concludes the proof.
\end{proof}

In light of Lemma \ref{lemma:hausdorff_contraction}, we can strengthen Lemma \ref{lemma:sc-traced-by-geodesic} by being able to say that not only are $C$-contracting $C$-quasi-geodesics close to a geodesic, but that geodesic is also $C'$-contracting for some $C'$ only depending on $C$. This is summarized in the following lemma. 
\begin{lemma}\label{lemm:nearby-geodesic}
    There exists a function $\Phi_{geo}: \R_{\geq 0}\to \R_{\geq 0}$ such that every $C$-contracting $C$-quasi-geodesic $\lambda$ has Hausdorff distance at most $\Phi_{geo}(C)$ from a $\Phi_{geo}(C)$-contracting geodesic $\gamma$.
\end{lemma}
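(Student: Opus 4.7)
The plan is to combine Lemma \ref{lemma:sc-traced-by-geodesic} and Lemma \ref{lemma:hausdorff_contraction}(1) directly. The first of these lemmas guarantees the existence of a nearby geodesic and controls the Hausdorff distance; the second upgrades nearby curves to being strongly contracting. Together, they give both parts of the conclusion, provided we are careful about how the constants interact.

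More precisely, given a $C$-contracting $C$-quasi-geodesic $\lambda$, I would first invoke Lemma \ref{lemma:sc-traced-by-geodesic} to obtain a constant $D = D(C)$ and a geodesic $\gamma$ with $d_{\mathrm{Haus}}(\lambda, \gamma) \leq D$. Now I want to feed this back into Lemma \ref{lemma:hausdorff_contraction}(1) with the roles reversed from how they appear in that lemma: here $\lambda$ plays the role of the $C$-contracting $C$-quasi-geodesic, and $\gamma$ is the nearby $C$-quasi-geodesic whose contraction we want to control. Since being a $C$-quasi-geodesic, being $C$-contracting, and having Hausdorff distance at most $C$ are all properties that persist when $C$ is replaced by a larger constant, I would set $C' = \max(C, D, 1)$; then $\lambda$ is $C'$-contracting and a $C'$-quasi-geodesic, $\gamma$ is a geodesic and hence a $C'$-quasi-geodesic, and $d_{\mathrm{Haus}}(\lambda, \gamma) \leq C'$. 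Applying Lemma \ref{lemma:hausdorff_contraction}(1) then yields that $\gamma$ is $\Phi_d(C')$-contracting.

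Defining $\Phi_{geo}(C) := \max\bigl(D(C),\, \Phi_d(\max(C, D(C), 1))\bigr)$ gives a function depending only on $C$ with both $d_{\mathrm{Haus}}(\lambda, \gamma) \leq \Phi_{geo}(C)$ and $\gamma$ being $\Phi_{geo}(C)$-contracting, as required. There is no real obstacle here: the only thing to be careful about is bookkeeping, namely that the constant $C'$ used when invoking Lemma \ref{lemma:hausdorff_contraction}(1) depends only on $C$, so that the resulting contraction constant of $\gamma$ is itself a function of $C$ alone.
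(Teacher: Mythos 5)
Your proposal is correct and follows exactly the route the paper intends: the paper states this lemma without a separate proof, presenting it as the direct combination of Lemma \ref{lemma:sc-traced-by-geodesic} (existence of a nearby geodesic) with Lemma \ref{lemma:hausdorff_contraction}(1) (transfer of contraction to nearby quasi-geodesics). Your care with the role reversal and with making all constants uniform via $C' = \max(C, D, 1)$ is precisely the bookkeeping the paper leaves implicit.
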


\subsection{Quadrangle contraction}

In this subsection we introduce the notion of quadrangle contraction. We then show that quadrangle contraction is equivalent to strong contraction. Being able to work with quadrangle contraction instead of strong contraction, in particular the notion of thin geodesics, greatly simplifies the proofs about the contraction space defined in the next Section. 

Let $\gamma$ be a geodesic segment. The \emph{midpoint} of $\gamma$, denoted by $m_\gamma$, is the point on $\gamma$ which is equidistant from both endpoints $\gamma^-$ and $\gamma^+$ of $\gamma$. For any pair of points $x, y$ we denote $m_{[x, y]}$ by $m_{xy}$.    

\begin{defn}
    An \emph{$n$-gon} $\mc G$ is an $n$-tuple $(\gamma_1, \gamma_2, \ldots, \gamma_n)$ of geodesic segments satisfying $\gamma_i^+ = \gamma_{i+1}^-$ for all $1\leq i \leq n$. Here $\gamma_{n+1}$ denotes $\gamma_1$. We call a 2-gon, 3-gon or 4-gon a bigon, triangle or quadrangle respectively. 
\end{defn}

We say that an $n$-gon $\mc G = (\gamma_1, \gamma_2, \ldots, \gamma_n)$ \emph{contains} a geodesic segment $\gamma$, if $\gamma$ is a subsegment of $\gamma_1$. 

\begin{defn}[$r$-thin] A geodesic segment $\gamma$ is \emph{$r$-thin} if every quadrangle $\mc Q = (\gamma_1, \gamma_2, \gamma_3, \gamma_4)$ containing $\gamma$ satisfies that $\gamma_2\cup \gamma_3\cup \gamma_4$ intersects the closed $r$-ball around $m_\gamma$. We say that a geodesic segment is \emph{thin} if it is $r$-thin for some $r$.
\end{defn}

Note that degenerate quadrangles (i.e. one or multiple sides are a point) are also quadrangles. Thus if we know a segment $\gamma$ is $r$-thin, this allows us to make statements about triangles and bigons the segment $\gamma$ is contained in.  

\begin{defn}[quadrangle-contracting]
    A geodesic $\gamma$ is $r$-quadrangle-contracting if every subsegment $\gamma'\subset \gamma$ of length at least $3r$ is $r$-thin. 
\end{defn}

Before we prove that quadrangle-contraction and strong contraction are equivalent, we prove the following technical lemma. The lemma highlights a property about closest point projection. Its main use is Corollary \ref{cor:closet_point_projection_quadrangles}, which we usually apply to quadrangles that contain contain $r$-thin geodesics.

\begin{lemma}\label{lemma:closet_point_projection_quadrangles}
    Let $\gamma$ be a geodesic. Let $x\in X$ be a point with closest point projection $x'\in \pi_{\gamma}(x)$. Let $y$ be a point on $\gamma$, then $d(y, [x, x'])\geq \frac{d(y, x')}{2}$.
\end{lemma}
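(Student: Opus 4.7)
The plan is to let $p$ be a point on $[x,x']$ realizing the distance from $y$ to $[x,x']$ (or approximately realizing it, in case the infimum isn't attained — though one can take a point within any $\varepsilon$ and let $\varepsilon \to 0$), and then compare the two legs $d(p,y)$ and $d(p,x')$ using the defining property of closest-point projection together with the fact that $[x,x']$ is a geodesic.

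First I would exploit that $x'\in \pi_\gamma(x)$ and $y\in \gamma$: this gives $d(x,y)\geq d(x,x')$. Since $p$ lies on the geodesic $[x,x']$, we have the exact equality $d(x,x')=d(x,p)+d(p,x')$. Combining these with the triangle inequality $d(x,y)\leq d(x,p)+d(p,y)$ forces $d(p,y)\geq d(p,x')$. So $p$ is at least as far from $y$ as it is from $x'$; morally, one cannot hide closer to $y$ on the segment $[x,x']$ than one is to $x'$, because shortcutting through such a $p$ would contradict $x'$ being the closest projection.

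Having this inequality in hand, I would close the argument by another triangle inequality:
\begin{equation*}
    d(y,x') \leq d(y,p) + d(p,x') \leq 2\, d(y,p),
\end{equation*}
which rearranges to $d(y,[x,x'])=d(y,p)\geq d(y,x')/2$, as required.

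I do not expect a real obstacle here; the statement is essentially a clean consequence of the minimality of the projection $x'$ combined with additivity of distance along the geodesic $[x,x']$. The only minor care is that a closest point $p$ on $[x,x']$ to $y$ might fail to exist in full generality, but since $[x,x']$ is a compact geodesic segment the infimum is attained, so no approximation argument is needed.
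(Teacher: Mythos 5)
Your proposal is correct and follows essentially the same route as the paper: the paper's proof also observes that for any $z\in[x,x']$ the minimality of $x'$ forces $d(z,x')\leq d(z,y)$, and then concludes via $d(x',y)\leq d(x',z)+d(z,y)\leq 2d(z,y)$. The only cosmetic difference is that the paper runs the inequality for an arbitrary point of $[x,x']$ rather than first selecting a distance-minimizing one, so the compactness remark is not even needed.
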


\begin{proof}
    Let $z \in [x, x']$. Since $x'$ is the closest point projection of $x$ on $\gamma$ we have that $ d(z, x')\leq d(z, y)$. By the triangle inequality, $d(x', y)\leq d(x', z) + d(z, y)\leq 2d(z, y)$, which concludes the proof.
\end{proof}

\begin{cor}\label{cor:closet_point_projection_quadrangles}
     Let $\gamma$ and $\gamma'$ be geodesics. Let $x$ and $y$ be points on $\gamma'$ with closest point projections $x'\in \pi_{\gamma}(x)$ and $y'\in \pi_{\gamma}(y)$. Let $m$ be a point on $\gamma$ with $d(x, m) > 2r$ and $d(y, m) > 2r$. If $d([x', x]\cup\gamma_{[x, y]}'\cup [y, y'], m)\leq r$, then $d(m, \gamma'_{[x, y]})\leq r$.
\end{cor}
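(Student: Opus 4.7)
The approach is a proof by contradiction driven by the three-way case split implicit in the hypothesis. Assume $d(m,\gamma'_{[x,y]})>r$. Then the bound $d([x',x]\cup\gamma'_{[x,y]}\cup[y,y'],m)\le r$ forces the point realising that distance to lie on one of the two side segments $[x',x]$ or $[y,y']$. By symmetry I would handle only the first case, producing some $z\in[x',x]$ with $d(z,m)\le r$; the other case is treated verbatim with $y,y'$ in place of $x,x'$.

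The single substantive step is to invoke the preceding Lemma~\ref{lemma:closet_point_projection_quadrangles} with $\gamma$ and the projection $x'\in\pi_\gamma(x)$ as in its statement, taking $m\in\gamma$ in the role of ``$y$'' there. This yields
\[
d(m,[x,x'])\;\ge\;\tfrac{1}{2}\,d(m,x').
\]
Since $z\in[x,x']$ witnesses $d(m,[x,x'])\le r$, the displayed inequality forces $d(m,x')\le 2r$.

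The final step is then to combine this upper bound with the case hypothesis together with the projection property $d(x,x')\le d(x,m)$ (which is valid because $m\in\gamma$ and $x'$ realises the closest point projection of $x$ onto $\gamma$) via a short triangle-inequality chain, in order to contradict the lower bound on $d(x,m)$ built into the hypothesis. This rules out the case $z\in[x',x]$, and by the symmetric argument it rules out $z\in[y,y']$, leaving $d(m,\gamma'_{[x,y]})\le r$ as desired.

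No genuine obstacle is expected: the corollary is essentially a packaging of Lemma~\ref{lemma:closet_point_projection_quadrangles}, and the only real care needed is to book-keep the case split cleanly and to chain the triangle inequality together with the projection property in the last step.
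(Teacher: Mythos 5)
The paper gives no proof of this corollary (it is stated as an immediate consequence of Lemma~\ref{lemma:closet_point_projection_quadrangles}), and your case split plus the application of that lemma to get $d(m,x')\le 2r$ is certainly the intended argument. The problem is your final step. From $d(m,x')\le 2r$, the projection property $d(x,x')\le d(x,m)$, and the existence of $z\in[x,x']$ with $d(z,m)\le r$, no triangle-inequality chain will contradict $d(x,m)>2r$: all one can extract is $d(x,x')\le d(x,m)\le d(x,z)+r\le d(x,x')+r$, which is compatible with $d(x,m)$ being arbitrarily large. Indeed, as literally stated the corollary is false: in $\R^2$ take $\gamma$ the horizontal axis, $m=(0,0)$, $x=(0,10)$, $y=(100,10)$, $\gamma'$ the segment joining them, so $x'=(0,0)=m$ and $y'=(100,0)$. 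Then $d(x,m)=10>2r$ and $d(y,m)>2r$ for $r=1$, the set $[x',x]\cup\gamma'_{[x,y]}\cup[y,y']$ meets $m$, yet $d(m,\gamma'_{[x,y]})=10>r$.

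The resolution is that the hypothesis should be read as $d(x',m)>2r$ and $d(y',m)>2r$ (a typo in the statement): this is exactly what every application in the paper supplies --- e.g.\ in Lemma~\ref{lemma:full_strong_contraction_equivalence} the point $m$ is the midpoint of $\gamma_{[a,b]}$ with $d(a,b)>4r+1$, so $d(a,m),d(b,m)>2r$ where $a,b$ are the projections, and similarly in Lemmas~\ref{lemma:contracting_implies_quasi_geodesic} and~\ref{lemma:quasi-geodesic_image_implies_strongly_contracting}. With that hypothesis your argument closes immediately: the bound $d(m,x')\le 2r$ obtained from Lemma~\ref{lemma:closet_point_projection_quadrangles} already contradicts $d(x',m)>2r$, and no further chain involving $d(x,x')\le d(x,m)$ is needed. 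You should either flag the corrected hypothesis or, if you insist on the stated one, recognize that the claimed ``short triangle-inequality chain'' does not exist.
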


\begin{lemma}\label{lemma:full_strong_contraction_equivalence}
There exists a function $\Phi_Q:\R_{\geq 0}\to \R{\geq 0}$ such that for any geodesic $\gamma$, the following properties are $\Phi_Q$-equivalent.
\begin{enumerate}
    \item The geodesic $\gamma$ is $r$--quadrangle contracting.
    \item The geodesic $\gamma$ is $C$--contracting.
    \item The geodesic $\gamma$ has $D$--bounded geodesic image.
\end{enumerate}
\end{lemma}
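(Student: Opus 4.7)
The plan is to reduce this three-way $\Phi_Q$-equivalence to establishing (1)$\iff$(3), since (2)$\iff$(3) is already provided by Lemma~\ref{lemma:equivalence_strong_contraction1}. Both remaining directions will be proved by placing the geodesic (or an auxiliary pair of points) inside a suitable quadrangle and exploiting Corollary~\ref{cor:closet_point_projection_quadrangles} to rule out certain sides being the ``close'' one.

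For (1)$\Rightarrow$(3), assume $\gamma$ is $r$-quadrangle contracting. The plan is to show it has $4r$-bounded geodesic image. Given a geodesic $\lambda$ with $d(\lambda,\gamma)\geq 4r$ and any $x,y\in\lambda$, set $x'=\pi_\gamma(x)$, $y'=\pi_\gamma(y)$, and suppose for contradiction that $d(x',y')>4r$. Form the quadrangle $(\gamma_{[x',y']},\,[y',y],\,\lambda_{[y,x]},\,[x,x'])$. Its first side has length $>3r$, hence is $r$-thin with midpoint $m$. Since $d(x',m)=d(y',m)=d(x',y')/2>2r$, Lemma~\ref{lemma:closet_point_projection_quadrangles} forces $d(m,[x,x'])>r$ and $d(m,[y,y'])>r$, so the only side that can meet $B_r(m)$ is $\lambda_{[y,x]}$. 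This produces a point of $\lambda$ within $r$ of $m\in\gamma$, contradicting $d(\lambda,\gamma)\geq 4r$.

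For (2)$\Rightarrow$(1), assume $\gamma$ is $C$-contracting and fix a subsegment $\gamma'\subset\gamma$ of length $L\geq 3r$ sitting inside a quadrangle $(\gamma_1,\gamma_2,\gamma_3,\gamma_4)$ with $\gamma'\subset\gamma_1$; write $p,q$ for the endpoints of $\gamma'$ and $m$ for its midpoint. By Lemma~\ref{lemma:subsegments} combined with Lemma~\ref{lemma:equivalence_strong_contraction1}, $\gamma'$ has $K$-bounded geodesic image for $K$ depending only on $C$. Collinearity of $v_1,p,q,v_2$ along $\gamma_1$ forces $\pi_{\gamma'}(v_1)=p$ and $\pi_{\gamma'}(v_2)=q$. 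The plan is to track $\pi_{\gamma'}$ along the concatenation $W=\gamma_2\cdot\gamma_3\cdot\gamma_4$ going from $v_2$ to $v_1$. As a warm-up, if $W$ stayed entirely at distance $>K$ from $\gamma'$, bounded geodesic image on each of the three sides chained across the vertices $v_3,v_4$ would give $d(p,q)\leq 3K$, contradicting $L>3K$. In general, as the projection moves from $q$ to $p$ along $\gamma'$, the crossing of $m$ must occur either inside a sub-piece of $W$ lying in $N_K(\gamma')$ (in which case the point itself is within $O(K)$ of $m$) or at a vertex whose projection is near $m$; in the latter case bounded geodesic image on the subsequent side forces the next entry into $N_K(\gamma')$ to also have projection near $m$, again yielding a point close to both $\gamma'$ and $m$. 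Choosing $r=O(K)$ accordingly proves $\gamma'$ is $r$-thin.

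The main obstacle is the vertex case in the (2)$\Rightarrow$(1) direction: closest-point projection is generally discontinuous, so when the projection of $W$ jumps from one side of $m$ to the other at a vertex $v_j$, the vertex itself may be far from $\gamma'$. Resolving this requires following the geodesic emanating from $v_j$ and using bounded geodesic image to argue that the projection remains near $m$ until this next side first enters $N_K(\gamma')$, at which entry point we finally obtain a point satisfying both the proximity-to-$\gamma'$ and projection-near-$m$ conditions needed to conclude $r$-thinness.
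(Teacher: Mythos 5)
Your proposal is correct in outline. The $(1)\Rightarrow(3)$ direction is essentially the paper's argument: the same quadrangle $(\gamma_{[x',y']},[y',y],\lambda_{[y,x]},[x,x'])$, $r$-thinness of its first side, and Lemma \ref{lemma:closet_point_projection_quadrangles} to rule out the two projection legs. (One cosmetic point: with the constant $4r$ the case $r=0$ yields no contradiction, since ``$d(\lambda,\gamma)\le r$'' is then compatible with ``$d(\lambda,\gamma)\ge 4r$''; take $4r+1$ as the paper does.)

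The $(2)\Rightarrow(1)$ direction is where you genuinely diverge, and it is worth comparing the two routes. You keep all of $\gamma'$ as the projection target and track $\pi_{\gamma'}$ along $W=\gamma_2\cdot\gamma_3\cdot\gamma_4$ to locate where the projection crosses the midpoint $m$. This forces the near/far decomposition of $W$ relative to $\mc N_K(\gamma')$, a coarse-Lipschitz estimate for $\pi_{\gamma'}$ restricted to $\mc N_K(\gamma')$ (needed to justify that a crossing inside a near piece produces a point within $O(K)$ of $m$, since the projection of a near arc can have arbitrarily large diameter), and the iterated drift-by-$K$ bookkeeping across up to three far sides in your vertex case. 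All of this can be completed --- the far excursions have projection of diameter at most $3K$ even when they span both interior vertices, and the case where a far excursion carrying the crossing terminates at $v_1$ or $v_2$ is excluded because $d(p,m)=L/2$ is large compared to $K$ --- but it is the laborious route. The paper instead projects onto a short window $\gamma''\subset\gamma'$ of length $3C'+1$ centred at $m_{\gamma'}$: the set $\pi_{\gamma''}(\gamma_2\cup\gamma_3\cup\gamma_4)$ contains both endpoints of $\gamma''$, and since the three sides chain together, some side satisfies $\diam(\pi_{\gamma''}(\gamma_i))>C'$, hence by bounded geodesic image comes within $C'$ of $\gamma''$ and therefore within $C'+\ell(\gamma'')$ of $m_{\gamma'}$. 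Shrinking the target so that ``close to the target'' already means ``close to $m_{\gamma'}$'' eliminates the crossing analysis and the discontinuity issues entirely. If you keep your version, the two steps you must make explicit are the $2K$-coarse-Lipschitz bound for the projection on $\mc N_K(\gamma')$ and the $3K$ bound on projections of far excursions spanning the vertices.
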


\begin{proof}
    Lemma \ref{lemma:equivalence_strong_contraction1} implies that $(2)\iff (3)$.

    $(2)\implies (1): $ Assume $\gamma$ is $C$--contracting. Let $\gamma'\subset \gamma$ be a subsegment of length $\ell(\gamma')\geq 3C'+1$, where $C' = \Phi_{con}(\Phi_{sub}(C))$ is a constant such that every subsegment of $\gamma$ has $C'$--bounded geodesic image (see Lemmas \ref{lemma:subsegments} and \ref{lemma:equivalence_strong_contraction1}). It suffices to show that $\gamma'$ is $r = (4C'+1)$--thin. Let $\gamma''\subset\gamma'$ be a subsegment of length $3C'+1$ which contains $m_{\gamma'}$. Let $Q = (\gamma_1, \gamma_2, \gamma_3, \gamma_4)$ be a geodesic quadrangle containing $\gamma'$. The set $\pi_{\gamma''}(\gamma_2\cup \gamma_3\cup \gamma_4)$ contains both endpoints of $\gamma''$. Thus $\diam(\pi_{\gamma''}(\gamma_i))\geq \ell(\gamma'')/3 > C'$ for some $i\in\{2, 3, 4\}$. Since $\gamma''$ has $C'$--bounded geodesic image $d(\gamma_i, \gamma'') < C'$ and hence $d(\gamma_i, m_{\gamma'})<C' + \ell(\gamma'') \leq r$.

    $(1)\implies (3):$ Let $\lambda$ be a geodesic with $\diam(\pi_{\gamma}(\lambda)) > 4r+1$. Let $x, y$ be points on $\lambda$ such that there exist points $a\in \pi_\gamma(x), b\in \pi_\gamma(y)$ with $d(a, b)> 4r+1$. Let $\gamma' = \gamma_{[a, b]}$. Consider the geodesic quadrangle $\mc Q = (\gamma', [b , y], \lambda_{[y, x]}, [x, a])$. Using that $\gamma$ is $r$--quadrangle-contracting and using Corollary \ref{cor:closet_point_projection_quadrangles}, we get that $d(\lambda_{[y, x]}, m_{\gamma})\leq r$. Hence $d(\lambda, \gamma)\leq r$, which shows that $\gamma$ has $D = (4r+1)$--bounded geodesic image.
\end{proof}

\begin{defn}\label{def:phi}
    We denote by $\Phi$ the function which is the maximum of the previously mentioned functions $\Phi_d, \Phi_{geo}, \Phi_{con}, \Phi_{sub}$ and $\Phi_{Q}$.
\end{defn}

\subsection{Morse and strongly contracting Elements} 

In this section we recall the notions of Morse and strongly contracting elements and outline how they relate to one another.

\textbf{Notation:} In this section, $G$ denotes a group and $(X, d)$ a geodesic metric space on which $G$ acts. All actions in this paper are assumed to be actions by isometries.

An element $g\in G$ is called \emph{loxodromic} (with respect to the action of $g\acts X$), if the map $\Z\to X$ defined via $n\mapsto g^n x$ is a quasi-isometry for some (or equivalently all) $x\in X$.

If $G$ is a finitely generated group acting geometrically on $X$, then an element $g\in G$ is \emph{Morse} if the set $\{g^ix\}_{i\in \Z}$ is at bounded Hausdorff distance from a Morse geodesic line. Since Morseness is preserved under quasi-isometries, being Morse (unlike being loxodromic) is an intrinsic property of the element $g\in G$ and does not dependent on the specific action or space $X$. 

\begin{defn}[strongly-contracting axis]
    We say that (the axis of) an element $g\in G$ is $C$--contracting, if $g\acts X$ is loxodromic and $g$ stabilizes a bi-infinite $C$-contracting $C$-quasi-geodesic. We say that the axis of $g$ is strongly-contracting if it is $C$-contracting for some constant $C\geq 0$. 
\end{defn}

Observe that if an element $g\acts X$ is $C$-contracting, then Lemma \ref{lemm:nearby-geodesic} states that $\{g^ix\}_{i\in \Z}$ is in the $\Phi(C)$--neighbourhood of a $\Phi(C)$--contracting geodesic for some $x\in X$. 

Observe that (by Lemma \ref{lemm:nearby-geodesic} and Lemma \ref{lemma:hausdorff_contraction}) an element $g\acts X$ is strongly-contracting if and only if $\{g^ix\}_{i\in \Z}$ is at bounded Hausdorff distance from a strongly-contracting geodesic line. If the action of a finitely generated group $G$ on $X$ is geometric and $g$ is strongly-contracting, then $g$ is Morse.

\subsection{Actions on hyperbolic spaces}

In this section we recall some background on acylindrical hyperbolicity and WPD elements. For more details we recommend \cite{O:acylindrical}.

\textbf{Notation:} As in the previous section, $G$ denotes a group acting (by isometries) on a geodesic metric space $(X, d)$.

\begin{defn}[acylindrical action]
    Let $G$ be a group that acts on a hyperbolic space $X$. We say that $G$ acts \emph{acylindrically} on $X$ if for all $R>0$ there exists $n\in \N$ and $D>0$ such that for all $x, y\in X$ with $d(x, y)>D$ the following holds:
    \begin{align}
        \abs{\{g\in G \mid d(x, gx)<R, d(y, gy)<R\}} < n.
    \end{align}
\end{defn}

A finitely generated group $G$ is called \emph{acylindrically hyperbolic} if it admits a non-elementary acylindrical action. 

In our paper, we focus on slightly weaker forms of acylindrical actions. The first one is non-uniform acylindricity, where $\abs{\{g\in G \mid d(x, gx)<R, d(y, gy)<R\}}$ is bounded, but not necessarily uniformly bounded, this was introduced in \cite{Gen:cat-acylindrical}. The second one is an acylindrical action along a subset, introduced in \cite{S:hypemb}. There, $\abs{\{g\in G \mid d(x, gx)<R, d(y, gy)<R\}} < n$ but only for $x, y$ in a certain subset $A\subset X$ instead of the whole space.

\begin{defn}[Non-uniform acylindricity]\label{def:non-uniform}
    We say that a group $G$ acts non-uniformly acylindrically on a hyperbolic space $X$ if for all $R> 0$ there exists $D\geq 0 $ such that 
    \begin{align}\label{eq:non-uni}
        \abs{\{g\in G | d(x, gx) < R, d(y, gy) < R\}}< \infty,
    \end{align}
    for all $x, y\in X$ with $d(x, y)\geq D$.
\end{defn}

\begin{defn}\label{def:achyp_along_subsets}
    Let $G$ be a group which acts on a hyperbolic space $Y$ and let $A$ be a subset of $Y$. We say that $G$ acts \emph{acylindrically along $A$} if for all $R>0$ there exists $n\in \N$ and $D > 0$ such that for all $x, y\in A$ with $d(x, y)> D$ the following holds:
    \begin{align}\label{eq:acylindrical_along}
        \abs{\left \{ g\in G | d (x, gx) < R, d(y, gy) < R\right \}} < n.
    \end{align}
\end{defn}

In our paper, we will have actions which are acylindrical along the orbit of \emph{stable subgroups} of $G$.

\begin{defn}\label{def:stable_subgroup}
    Let $H$ be a finitely generated subgroup of a finitely generated group $G$. We say that $H$ is \emph{stable} if the following two properties hold
    \begin{enumerate}
        \item $H$ is undistorted (in other words, the natural embedding of $H$ in $G$ is a quasi-isometric embedding)
        \item There exists a Morse gauge $M$ such that any geodesic (in $G$) between any two points in $H$ is $M$--Morse in $G$.
    \end{enumerate}
\end{defn}

\subsubsection{WPD elements}

Another important notion are WPD elements.

\begin{defn}[WPD]\label{def:wpd}
    Let $G$ be a group that acts on a hyperbolic space $X$. A loxodromic element $g\in G$ is called WPD (weakly properly discontinuous) if for all $x\in X$ and $R>0$ there exists an integer $N\in \N$ such that 
    \begin{align}\label{eq:wpd}
        \abs{\left\{h\in G| d(x, h x) < R, d(g^Nx, hg^Nx)<R\right\}} < \infty.
    \end{align}
\end{defn}

An element $g\in G$ is called \emph{generalized loxodromic} if there is a hyperbolic space $X$ and an action of $G$ on $X$ for which $g$ is WPD. In \cite{S:hypemb}, it is shown that every generalized loxodromic element is a Morse element.

One focus of our paper is to construct actions where all generalized loxodromics act WPD. We call such actions universal WPD actions.

\begin{defn}\label{def:universal-wpd}
    Let $G$ be a group acting on a hyperbolic space $X$. We say that the action of $G$ on $X$ is a \emph{universal WPD action} if all generalized loxodromic elements are WPD.
\end{defn}

In Section \ref{sec:construction} we will construct a hyperbolic space with an action of a group $G$, which satisfies (under certain assumptions) that an element is WPD if and only if it is strongly contracting. Since all Morse elements are generalized loxodromics, such an action is a universal WPD action if and only if being a Morse element is equivalent to being a strongly contracting element. This leads us to the definition of \emph{Morse-dichotomous} and \emph{weakly Morse dichotomous} spaces, where this is precisely the case.

\begin{defn}[Morse-dichotomous]\label{def:morse-dich}
    We say that a geodesic metric space $X$ is \emph{Morse-dichotomous} if for every Morse gauge $M$ there exists a constant $C$ such that all $M$-Morse geodesics are $C$-contracting. We say that a geodesic metric space $X$ is \emph{weakly Morse-dichotomous} if every Morse geodesic is strongly-contracting. 
\end{defn}

Examples of Morse-dichotomous spaces include the following: 
\begin{itemize}
    \item CAT(0) spaces \cite{C:Morse_CAT(0)}
    \item Injective metric spaces \cite{SZ:injective}. Note that the mapping class group acts on an injective metric space \cite{HHP:injectivity}.
    \item Certain infinitely presented $C'(1/6)$--small cancellation groups \cite{Z:small_cancellation}.
\end{itemize}

\subsubsection{Quasi-isometrically embedded subspaces}

In \cite{BCKM:recog_space}, the notion of recognizing space for stable subgroups is introduced.

\begin{defn}\label{def:recognizing-space}
    Let $H\leq G$ be a stable subgroup and let $X$ be a hyperbolic space on which $G$ acts. If the orbit map from $H$ to $X$ is a quasi-isometry we say that $X$ is a \emph{recognizing space} for $H$. We further say that $X$ is a \emph{universal recognizing space} for $G$ if it is a recognizing space for all stable subgroups of $G$.
\end{defn}

In Section \ref{sec:acylindricity} we will show that under certain conditions on $X$ and the action of $G$ on $X$, the contraction space $\hat{X}$ is a universal recognizing space.

\section{Construction - the contraction space}\label{sec:construction}

In this section, we define the contraction space and show that it is hyperbolic. Intuitively, the construction works as follows. We want to ``collapse'' (i.e. add an interval of length one between) all pairs of points whose connecting geodesic is not ``strongly-contracting enough''. Since every geodesic segment is strongly-contracting (the diameter of any projection is at most the length of the segment) we need to specify what ``strongly-contracting enough'' means. It turns out that there are several different ways to define ``strongly-contracting enough'', which lead to different hyperbolic spaces which are not canonically isomorphic. In the following, we focus on a particular choice of making the notion ``strongly-contracting enough'' precise. In Section \ref{sec:alternative-definitions}, we discuss different choices of the definitions.

We introduce the notion of contraction gauge. 

\begin{defn}[contraction gauge]
    We call a function $K: \R_{\geq 0}\to \R_{\geq 0}\cup\{ \infty\}$ which is non-decreasing and satisfies $K(r)\geq 4r+1$ for all $r$ a \emph{contraction-gauge}. If $\infty$ is not in the image of $K$, we say that $K$ is \emph{full} otherwise we say that $K$ is \emph{partial}.
\end{defn}

Morally, a geodesic segment is ``strongly-contracting enough'' if for some $r\geq 0$, it is at least $K(r)$--long and $r$--thin. In the next section, we make the notion of ``strongly-contracting enough'' precise by defining what it means to be $K$-anti-contracting.\\
\textbf{Notation:} For the rest of the paper, unless noted otherwise, $K$ denotes a contraction gauge. 

\subsection{Construction}
We say that a pair of points $(x, y)$ is anti-contracting if none of its subsegments are ``strongly-contracting enough''.

\begin{defn}[Anti-contracting]\label{def:anti-contracting}
     We say that a pair of points $(x, y)\in X\times X$ is \emph{$K$-anti-contracting} if $d(x, y)\geq 1$ and for all $r\geq 0$ and all geodesics $\gamma$ from $x$ to $y$ the following holds. No subsegment $\gamma'$ of $\gamma$ of length $\ell(\gamma')\geq K(r)$ is $r$--thin. We denote the set of $K$--anti-contracting pairs by $\mc A_K$.
\end{defn}

Now we are ready to define the $K$-contraction space.

\begin{defn}[The $K$-contraction space]\label{def:K-contraction-space}
    The $K$-contraction space of $X$, denoted by $\hat{X}_K$ is constructed as follows; Start with the space $X$. Then, for every pair of $K$-anti-contracting points $(x, y)\in \mc A_K$ add an interval of length one and glue its endpoints to $x$ and $y$ respectively. 
\end{defn}

We denote the induced path metric in $\hat{X}_K$ by $\hat{d}$ if it is clear which contraction function was used and by $\hat{d}_K$ otherwise. Since $X$ is a geodesic metric space, so is $\hat{X}_K$. Similarly, we denote $D$-neighbourhoods in $\hat{X}_K$ by $\hat{\mc{N}}_D(\cdot)$, the diameter with respect to $\hat{d}$ by $\widehat{\diam}(\cdot)$ and so on. There is a natural inclusion $\iota_K:X\hookrightarrow \hat{X}_K$ and we identify $X$ with its image in $\hat{X}_K$. Observe that $d(x, y)\geq \hat{d}(x, y)$ for all $x, y\in X$. For any path $\gamma: I\to X$ we denote the composition $\iota_K\circ \gamma : I\to \hat{X}_K$ by $\hat{\gamma}$. While the images of $\gamma$ and $\hat\gamma$ are equal, the important distinction is that if $\gamma$ is a (quasi-)geodesic, $\hat{\gamma}$ need not be one. Further $\widehat{[x, y]}$ denotes the image of the geodesic $[x, y]$ in $\hat{X}_K$ and is not necessarily a geodesic (however, later on we will prove that $\widehat{[x, y]}$ is at bounded Hausdorff distance from a geodesic in $\hat{X}_K$). For any pair of points $x, y\in \hat{X}_K$, $[x, y]_K$ denotes a choice of geodesic (with respect to $\hat{d}$) from $x$ to $y$. Further, if we do not specify, then closest points, neighbourhoods, being a geodesic and so on is always considered to be with respect to the metric $d$.

\begin{rem}
    If $\hat{d}_K(x, y) > 1$ for some points $x, y\in X$, then there exists a constant $r\geq 0$, a geodesic $\gamma$ from $x$ to $y$ and a subsegment $\gamma'\subset \gamma$ such that $\gamma'$ is at least $K(r)$--ong and $r$--thin.
\end{rem}

The construction above associates a family $\{\hat{X}_K\}_K$ of what we will prove to be hyperbolic spaces to each geodesic metric space $\hat{X}_K$. Later on we will show that on mild assumptions on $K$ (i.e. $K(r)\geq 10r +1$ or $K$ being full) the $K$-contraction space satisfies a variety of desired properties. To not always have to specify the contraction gauge $K$, we want to distinguish one of these spaces and simply call it the contraction space. 

\begin{defn}[Contraction space]
    If $K(r) = 10r+1$, we denote $\hat{X}_K$ by $\hat{X}$ and call it the \emph{contraction space}.
\end{defn}

\subsection{Hyperbolicity} In this section, we first show that for any contraction gauge $K$, the $K$--contraction space $\hat{X}_K$ is hyperbolic. Secondly, we show that if $X$ contains a strongly-contracting geodesic ray and $K(r_0)\neq \infty$ for some large enough $r_0\geq 0$, then $(\hat{X}_K, \hat{d})$ is non-trivial.

\begin{prop}\label{prop:contraction_space_hyp_for_graphs}
    For any contraction gauge $K$, the $K$-contraction space $\hat{X}_K$ is $\delta$--hyperbolic, where $\delta$ is a constant neither depending on $K$ nor $X$. 
\end{prop}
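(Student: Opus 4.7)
The plan is to verify the slim-triangle criterion with a universal $\delta$, using the quadrangle-contraction characterization of strong contraction from Lemma~\ref{lemma:full_strong_contraction_equivalence} together with the absolute bound $K(r)\geq 4r+1$. That $\delta$ is independent of $K$ and $X$ is a strong hint that the argument only uses this linear bound.

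I would first analyze the structure of $\hat d$-geodesics. Any $\hat d$-geodesic decomposes as an alternating concatenation of \emph{jump edges} of $\hat d$-length one (each crossing a $K$-anti-contracting pair) and \emph{$X$-pieces} that are $d$-geodesics in $X$. A crucial observation: any $X$-piece $\sigma$ with $d(\sigma^-,\sigma^+)>1$ must have endpoints that are \emph{not} $K$-anti-contracting, for otherwise a jump edge of $\hat d$-length $1$ would shortcut $\sigma$. By Definition~\ref{def:anti-contracting}, $\sigma^-$ and $\sigma^+$ are joined by some $X$-geodesic $\gamma$ containing an $r$-thin subsegment $\gamma'$ of length at least $K(r)\geq 4r+1$ for some $r \geq 0$. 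Since $\hat\sigma$ and $\hat\gamma$ have the same $\hat d$-length, I may replace $\sigma$ by $\gamma$ inside any $\hat d$-geodesic, equipping every long $X$-piece with a distinguished ``thin midpoint'' $m=m_{\gamma'}$ lying at $d$-distance at least $2r$ from both endpoints of $\gamma'$.

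For the slim triangle argument, take a $\hat d$-geodesic triangle $([x,y]_K,[y,z]_K,[z,x]_K)$ and a point $p\in [x,y]_K$. If $p$ lies on a jump edge or a short $X$-piece, then $p$ is within $\hat d$-distance $O(1)$ of a vertex of that piece, which lies on the adjacent side; otherwise, after moving $p$ by a $\hat d$-bounded amount I may assume $p=m$ is the thin midpoint of some $\gamma'$ contained in a long $X$-piece of $[x,y]_K$. I then build an $X$-quadrangle $(\gamma',\eta_1,\eta_2,\eta_3)$ around $\gamma'$, where $\eta_1\cup\eta_2\cup\eta_3$ is an $X$-concatenation tracing around $[y,z]_K\cup[z,x]_K$ and the complementary portions of $[x,y]_K$: each $X$-piece is kept as-is, and each jump edge is replaced by a $d$-geodesic bridging its endpoints in $X$. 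The $r$-thin property of $\gamma'$, via Lemma~\ref{lemma:closet_point_projection_quadrangles} and Corollary~\ref{cor:closet_point_projection_quadrangles}, supplies a point $q$ on $\eta_1\cup\eta_2\cup\eta_3$ with $d(m,q)\leq r$, and therefore $\hat d(p,q)\leq r$.

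The main obstacle is closing the loop when $q$ lies on a bridging $d$-geodesic $\beta$ replacing some jump edge $e$, rather than on a piece of $[y,z]_K\cup[z,x]_K$ itself. Here I would argue that the thin character of $\gamma'$ interacting with the anti-contracting property of $e$'s endpoints forces $q$ to be $\hat d$-close to an endpoint of $e$: since the endpoints of $e$ are $K$-anti-contracting, $\beta$ cannot contain any long thin subsegment of its own, so the contraction of $m$ onto $\beta$ via the quadrangle bound localizes $q$ near an endpoint, each of which lies on the triangle. Making this rigorous likely requires a secondary application of the quadrangle-contraction property to a sub-quadrangle built from $\gamma'$ and $\beta$, or a short induction on the number of bridging segments; in either case the recursion is controlled purely by the universal inequality $K(r)\geq 4r+1$, so the resulting bound on $\delta$ depends on neither $K$ nor $X$.
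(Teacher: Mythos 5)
Your strategy is genuinely different from the paper's, and the difference is exactly where the gap lies. You attempt to verify slimness of actual $\hat{d}$-geodesic triangles, which forces you to decompose each side into jump edges and $X$-pieces and to confront the case where the point produced by thinness lands on a bridging geodesic. The paper sidesteps this entirely: it invokes the guessing-geodesics criterion (Lemma \ref{lemma:guessing_geodesics}) with candidate paths $\eta(x,y)=\widehat{[x,y]}$, so every polygon it ever needs to consider is built from at most four honest $X$-geodesics --- the connector $\lambda$ from a point $p$ on one side to its $d$-closest point $q$ on the union of the other two, together with subsegments of the three $X$-geodesic sides. The thin subsegment is found on $\lambda$ (from non-anti-contraction of $(p,q)$ when $\hat{d}(p,q)>1$), and its thinness applied to one triangle and one quadrangle traps $p$ within $2r$ of the pinch point, contradicting $K(r)\geq 4r+1$.

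The gap in your argument is the step you flag as ``the main obstacle'', and it is not a technicality. Thinness is by definition a statement about \emph{quadrangles}, i.e.\ $4$-gons each of whose sides is a single geodesic segment. The path $\eta_1\cup\eta_2\cup\eta_3$ you trace around the rest of the triangle is a concatenation of unboundedly many geodesic segments and bridging geodesics (a $\hat{d}$-geodesic of $\hat{d}$-length $L$ can contain on the order of $L$ jump edges), so the $r$-thin property of $\gamma'$ simply does not apply to it; and an induction over the bridging segments, losing an additive $r$ at each step, would produce a constant depending on the number of pieces and hence on the size of the triangle, which is precisely not hyperbolicity. A second, smaller defect: the thin subsegment guaranteed by non-anti-contraction of the endpoints of a long $X$-piece lives somewhere on some geodesic between those endpoints, possibly at large $\hat{d}$-distance from your point $p$ (an $X$-piece of a $\hat{d}$-geodesic has $\hat{d}$-length equal to its $d$-length), so ``after moving $p$ by a $\hat{d}$-bounded amount I may assume $p=m_{\gamma'}$'' is unjustified as stated; one would have to apply non-anti-contraction to a pair of points at bounded distance from $p$ instead. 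To repair the proof you essentially need the paper's device (or an equivalent one) that replaces $\hat{d}$-geodesics by images of single $X$-geodesics before any thinness argument is run.
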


To prove this, we use the following proposition of \cite{DDLS:veech}. It is a version of the guessing geodesic lemma from \cite{MS2013} which works in a more general setting. 

\begin{lemma}[Guessing geodesics, Proposition 2.2 of \cite{DDLS:veech}] \label{lemma:guessing_geodesics}Let $Y$ be a path-connected metric space, let $S\subset Y$ be an $R$-dense subset for some $R > 0$, and let $\delta\geq 0$ such that for all pairs $x, y\in Y$ there are rectifyable path-connected sets $\eta(x, y)\subset Y$ containing $x, y$ and satisfying:
\begin{enumerate}
    \item for all $x, y\in S$ with $d(x, y)\leq 3R$ we have $\diam(\eta(x, y))\leq \delta$,\label{guess_geo1}
    \item for all $x, y, z\in S$ we have $\eta (x, y)\subset \mc N_{\delta}(\eta(x, z)\cup \eta(z, y))$.\label{guess_geo2}
\end{enumerate}
Then there exists a constant $\delta'$ depending only on $\delta$ and $R$ such that $Y$ is $\delta'$-hyperbolic and the Hausdorff distance between $\eta(x, y)$ and any geodesic from $x$ to $y$ is at most $\delta'$.
\end{lemma}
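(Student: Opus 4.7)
The plan is to adapt the classical guessing-geodesics argument of Masur--Schleimer and Bowditch to the setting where hypotheses (1) and (2) hold only on an $R$-dense subset $S$ rather than on all of $Y$. First, I would transfer (1) and (2) from $S$ to $Y$ using the $R$-density: given $x, y \in Y$, pick $x', y' \in S$ within distance $R$, and bound the Hausdorff distance between $\eta(x, y)$ and $\eta(x', y')$ by a constant $\delta_1 = \delta_1(\delta, R)$ via two applications of (2) (once to swap each endpoint, combined with the fact that $\eta(a, b)$ contains $a$ and $b$). This yields enlarged but uniform analogues of (1) and (2) valid on all of $Y$.

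Next, I would establish a uniform slim-triangle condition for the family $\{\eta(x, y)\}$. Using that each $\eta(x, y)$ is rectifiable, subdivide it into arc-length pieces of length at most $R$, producing a chain $x = p_0, \ldots, p_n = y$ with $d(p_i, p_{i+1}) \leq R$. Approximating each $p_i$ by $q_i \in S$ (using $R$-density) yields a chain in $S$ with $d(q_i, q_{i+1}) \leq 3R$, so by (1) each $\eta(q_i, q_{i+1})$ has diameter $\leq \delta$. A dyadic bisection applied to (2) then shows $\eta(q_0, q_n) \subset \mc{N}_{C\delta \log n}\bigl(\bigcup_i \eta(q_i, q_{i+1})\bigr)$; combining with (1) across the short pieces simultaneously gives that $\eta(x, y)$ is a coarse quasi-geodesic with constants depending only on $\delta, R$, and that triangles of $\eta$ paths are uniformly slim.

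Finally, to deduce that $Y$ is $\delta'$-hyperbolic, I would verify the Gromov four-point condition on $Y$: for any quadruple $w, x, y, z \in Y$, use the uniform slim-triangle property from Step 2 together with the extension from Step 1 to bound the Gromov products, obtaining slack $\delta'$ depending only on $\delta$ and $R$. The stated Hausdorff bound between $\eta(x, y)$ and any geodesic from $x$ to $y$ then follows from the Morse lemma in hyperbolic spaces applied to the uniform quasi-geodesic $\eta(x, y)$. The main obstacle is Step 2: the naive dyadic argument produces a factor $\log n$, where $n$ depends on $\mathrm{length}(\eta(x, y))$ and is not a priori bounded in terms of $d(x, y)$; converting this logarithmic estimate into a uniform one requires a careful telescoping argument, or a bootstrap that uses (1) to short-circuit the recursion once the pieces become sufficiently short. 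This interplay between (1), (2), and $R$-density is the technical heart of the guessing-geodesics lemma.
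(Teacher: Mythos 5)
First, note that the paper does not prove this lemma at all: it is quoted as Proposition~2.2 of \cite{DDLS:veech}, so there is no in-paper proof to compare against and your proposal must be judged on its own. The route you sketch --- dyadic subdivision via hypothesis (2), collapsing short pieces via hypothesis (1), and a bootstrap from a logarithmic to a uniform neighbourhood bound --- is the standard Bowditch/Masur--Schleimer guessing-geodesics argument, so the overall strategy is sound.

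There are, however, genuine gaps. (a) The decisive step is precisely the one you defer: the dyadic argument only yields $\eta(x,y)\subset\mathcal{N}_{O(\delta\log n)}\bigl(\bigcup_i\eta(q_i,q_{i+1})\bigr)$, and upgrading this to a uniform constant is the entire content of the lemma, not a technicality to be waved at. The standard fix is a self-improvement: if $p\in\eta(x,y)$ satisfies $d(p,\gamma)=r$ for a geodesic $\gamma$, one localizes to a subconfiguration of diameter $O(r)$ around $p$, using (2) a bounded number of times to cut off the two far ends and applying the logarithmic estimate only to the short middle piece; this gives $r\leq C\delta\log_2 r+C'$ and hence $r=O(1)$. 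Without carrying this out, the proposal is not a proof. (b) Your Step 1 applies hypothesis (2) to triples such as $(x,y,x')$ with $x,y\in Y\setminus S$, but (2) is only assumed for triples in $S$; the argument must be run entirely on $S$ (for instance on a Rips-type graph over $S$) and transferred to $Y$ by density only at the very end. (c) The claim that $\eta(x,y)$ is a ``coarse quasi-geodesic with constants depending only on $\delta,R$'', together with the appeal to the Morse lemma to obtain the Hausdorff bound, is unjustified and in the intended application false: rectifiability gives no bound on the length of $\eta(x,y)$ in terms of $d(x,y)$ (in this paper $\eta(x,y)=\widehat{[x,y]}$ has length $d_X(x,y)$, typically far larger than $\hat d(x,y)$), so $\eta(x,y)$ is not a parametrized quasi-geodesic and its Hausdorff proximity to geodesics cannot be outsourced to the Morse lemma --- it is exactly the hard direction that item (a) is supposed to establish.
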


\begin{figure}
    \centering
    \includegraphics{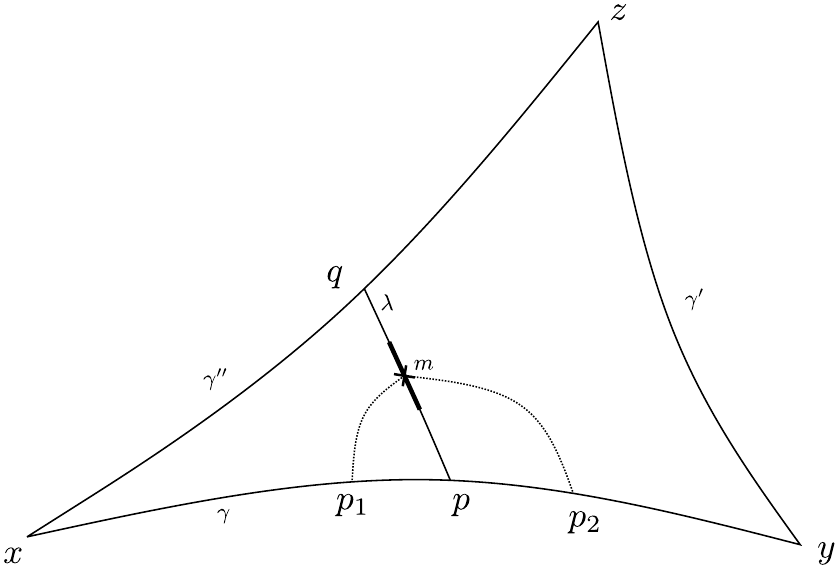}
    \caption{Geodesic triangles are thin}
    \label{fig:hyp-proof2}
\end{figure}

\begin{proof}[Proof of Proposition \ref{prop:contraction_space_hyp_for_graphs}]
    We use Lemma \ref{lemma:guessing_geodesics} about guessing geodesics where we choose $S = X$ and define $\eta(x, y) = \widehat{[x, y]}$ for all $x, y\in X$. The set $X$ is $\frac{1}{2}$-dense in $\hat{X}_K$.
    Clearly, $x, y\in \eta(x, y)$, further, $\ell(\eta(x, y))\leq d(x, y)$ so $\eta(x,y)$ are indeed rectifyable paths. It remains to prove \eqref{guess_geo1} and \eqref{guess_geo2}, which we will prove for $\delta=6$. 

    We first prove the following claim, which directly implies \eqref{guess_geo2}. 
    
    \begin{claim}
    If $(\gamma, \gamma', \gamma'')$ is a geodesic triangle in $X$, then $\gamma\subset \hat{\mc N}_1(\gamma'\cup\gamma'')$.     
    \end{claim}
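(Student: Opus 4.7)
The plan is, for each point $p \in \gamma$, to exhibit a point $q \in \gamma' \cup \gamma''$ with $\hat d(p, q) \leq 1$. I will take $q$ to be a closest point (or, if no minimizer is attained, an $\epsilon$-approximate one) to $p$ on $\gamma' \cup \gamma''$ with respect to $d$, and assume by symmetry that $q \in \gamma'$. If $d(p, q) < 1$ the bound is immediate, so assume $d(p, q) \geq 1$. It is then enough to show that the pair $(p, q)$ is $K$-anti-contracting, since the construction of $\hat X_K$ attaches an edge of length $1$ between such pairs, giving $\hat d(p, q) \leq 1$.

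Suppose for contradiction that $(p, q)$ is not $K$-anti-contracting. Then some geodesic $\alpha$ from $p$ to $q$ contains an $r$-thin subsegment $\sigma$ with $\ell(\sigma) \geq K(r) \geq 4r + 1$. Write $A, B, C$ for the vertices of the triangle, with $\gamma = [A, B]$, $\gamma' = [B, C]$ and $\gamma'' = [C, A]$. The key geometric input is to use both ways of returning from $q$ to $p$ around the triangle, producing two quadrangles which contain $\sigma$ as a subsegment of their first side:
\begin{align*}
    \mc Q_1 &= (\alpha,\ \gamma'_{[q, B]},\ \gamma_{[B, p]},\ \{p\}),\\
    \mc Q_2 &= (\alpha,\ \gamma'_{[q, C]},\ \gamma''_{[C, A]},\ \gamma_{[A, p]}),
\end{align*}
where the last side of $\mc Q_1$ is the degenerate geodesic at $p$ (permitted by the remark on degenerate $n$-gons). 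By $r$-thinness of $\sigma$, there exist points $z_1$ and $z_2$ in the unions of the remaining three sides of $\mc Q_1$ and $\mc Q_2$ respectively, each satisfying $d(z_i, m_\sigma) \leq r$.

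I will then analyse where $z_1$ can lie. If $z_1 \in \gamma'_{[q, B]}$, the closest-point property gives $d(p, z_1) \geq d(p, q)$, which combined with the triangle inequality $d(p, z_1) \leq d(p, m_\sigma) + r$ and the identity $d(p, m_\sigma) + d(m_\sigma, q) = d(p, q)$ (since $m_\sigma \in \alpha$) forces $d(m_\sigma, q) \leq r$; this contradicts the midpoint estimate $d(m_\sigma, q) \geq \ell(\sigma)/2 \geq 2r + 1/2$, which holds because $\sigma$ sits inside $\alpha \subseteq [p, q]$ with $m_\sigma$ as its midpoint. If $z_1 = p$, then $d(p, m_\sigma) \leq r$, contradicting the analogous estimate $d(p, m_\sigma) \geq \ell(\sigma)/2 \geq 2r + 1/2$.

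The delicate case is $z_1 \in \gamma_{[B, p]}$, where $\mc Q_1$ alone gives no contradiction because both $p$ and $z_1$ lie on $\gamma$; here $\mc Q_2$ is essential. If the witness $z_2$ from $\mc Q_2$ lies on $\gamma' \cup \gamma''$, the closest-point argument of the previous paragraph applied to $z_2$ yields the same contradiction. Otherwise $z_2 \in \gamma_{[A, p]}$, so $p$ separates $z_1$ and $z_2$ on the geodesic $\gamma$, giving
\[
    d(z_1, p) + d(z_2, p) \;=\; d(z_1, z_2) \;\leq\; 2r.
\]
At least one of the $z_i$ then satisfies $d(z_i, p) \leq r$, so $d(p, m_\sigma) \leq d(p, z_i) + d(z_i, m_\sigma) \leq 2r$, contradicting $d(p, m_\sigma) \geq \ell(\sigma)/2 \geq 2r + 1/2$. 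This is precisely where the defining inequality $K(r) \geq 4r + 1$ is used, and it is the main obstacle of the argument: a single quadrangle does not suffice, and one must combine both $\mc Q_1$ and $\mc Q_2$ with the fact that $p$ lies between the two witnesses on $\gamma$.
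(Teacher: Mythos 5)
Your proof is correct and follows essentially the same route as the paper's: both exclude witnesses on $\gamma'\cup\gamma''$ via the closest-point property, and both obtain the final contradiction by combining the two polygons (one degenerate) and using that $p$ separates the two remaining witness points on $\gamma$, forcing $d(p,m_\sigma)\leq 2r < K(r)/2$. The only cosmetic difference is that the paper phrases the first polygon as a triangle rather than a quadrangle with a degenerate side.
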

    
    \textit{Proof of claim.} Denote the starting points of $\gamma, \gamma'$ and $\gamma''$ by $x, y$ and $z$ respectively and let $p$ be a point on $\gamma$. We will show that $\hat{d}(p, q)\leq 1$, where $q$ is a point on $\gamma'\cup \gamma''$ closest to $p$ with respect to the metric $d$. Without loss of generality $q$ lies on $\gamma''$. Assume that $\hat{d}(p, q) > 1$. Then the pair $(p, q)$ is not $K$--anti-contracting. In particular, there exists a constant $r\geq 0$, a geodesic $\lambda: I \to X$ from $p$ to $q$ and a subgeodesic $\lambda'\subset \lambda$ which is at least $K(r)$--long and $r$--thin. Let $m$ be the midpoint of $\lambda'$ as depicted in Figure \ref{fig:hyp-proof2}. Note that $d(m, p)\geq K(r)/2> 2r$ (and similarly $d(m, q) > 2r$). Considering the triangle $(\lambda, \gamma_{[q, x]}'' ,\gamma_{[x, p]})$ yields that there exists a point $p_1$ on $\gamma_{[q, x]}''\cup\gamma_{[x, p]}$ with $d(m, p_1)\leq r$. We have that $d(p, p_1)\leq d(p, m) + r < d(p, q)$. Hence the point $p_1$ cannot lie on $\gamma_{[q, x]}''$ (otherwise $q$ would not be a closest point to $p$) implying that $p_1\in \gamma_{[x, p]}$. A similar argument for the quadrangle $(\lambda, \gamma_{[q, z]}'', \gamma'_{[z, y]}, \gamma_{[y, p]})$ yields that there exists a point $p_2\in \gamma_{[y, p]}$ with $d(m, p_2)\leq r$. Since $p$ lies on a geodesic from $p_1$ to $p_2$, there exists $i\in \{1, 2\}$ such that $d(p, p_i)\leq r$. Hence $d(p, m)\leq 2r$, a contradiction. Thus $\hat{d}(p, q)\leq 1$. Since this holds for any vertex $p\in \gamma$, the statement follows. \hfill $\blacksquare$
    
    \eqref{guess_geo1}: Let $x, y\in X$ with $\hat{d}(x, y)\leq 3/2$. If $d(x, y)\leq 3/2$, then $\widehat{\diam}{(\eta(x, y))}\leq 3/2\leq \delta$. Otherwise, there exist a pair of $K$-anti-contracting points $(x', y')$ with $d(x, x')\leq 1/2$ and $d(y, y')\leq 1/2$. By the definition of anti-contracting, for any $z, z'\in [x', y']$ we have that $d(z, z')\leq 1$ or $(z, z')$ is $K$--anti-contracting as well. In particular, $\widehat{\diam}([x', y'])\leq 1$. The claim proven above applied to the triangles $([x, y], [y, x'], [x', x])$ and $([y, x'], [x', y'], [y', y])$ yields that $[x, y]\subset \hat{\mc N}_2([x, x']\cup [x', y']\cup[y', y])$. Hence $\widehat{\diam}([x, y])\leq 6$.
\end{proof}

\begin{rem}\label{rem:delta_0}
    We denote the constant $\delta'$ from Lemma \ref{lemma:guessing_geodesics} for $\delta = 6$ by $\delta_0$. With this notation, the $K$-contraction space is $\delta_0$ hyperbolic.
\end{rem}

\subsection{Quasi-geodesics in the contraction space} In this section we investigate which geodesics in $X$ map to parameterised quasi-geodesics in $\hat{X}_K$.

\begin{rem}\label{rem:on_geodesic}
    Instead of choosing $\eta(x, y) = \widehat{[x, y]}$, we could have chosen $\eta(x, y) = \hat{\gamma}$ for any geodesic $\gamma$ from $x$ to $y$ and the proof would have worked analogously. Hence, any geodesic $\gamma : I\to X$ from $x$ to $y$ is at Hausdorff distance at most $\delta_0$ from $[x, y]_K$. In particular, if $z$ lies on a geodesic from $x$ to $y$, then $\hat{d}(x, z)\leq \hat{d}(x, y) + \delta_0$. Moreover, $\widehat{\mathrm{diam}}(\gamma)\leq \hat{d}(x, y) + 2\delta_0$ for any geodesic $\gamma : I\to X$ from $x$ to $y$. Indeed, let $p, q\in \gamma$ for some $p, q$ with $d(x, p)\leq d(x, q)$. Applying the the statement above twice, we get that $\hat{d}(p, y)\leq \hat{d}(x, y) + \delta_0$ and $\hat{d}(p, q)\leq \hat{d}(p, y) + \delta_0$, implying that $\hat{d}(p, q)< \hat{d}(x, y) + 2 \delta_0$.
\end{rem}

\begin{lemma}\label{lemma:contracting_implies_quasi_geodesic} Let $K$ be a contraction-gauge and let $\gamma:  I \to X$ be a $C$-contracting geodesic. If $K(\Phi(C)^3)< \infty$, then $\hat{\gamma} : I\to \hat{X}_K$ is a $Q$-quasi-geodesic, where $Q = 4(K(\Phi(C)^3)+6\Phi(C) + 3)$.
\end{lemma}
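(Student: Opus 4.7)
The upper bound of the quasi-geodesic inequality is immediate: the inclusion $\iota_K\colon X\hookrightarrow \hat X_K$ is $1$-Lipschitz, so $\hat d(\hat\gamma(s),\hat\gamma(t)) \le d(\gamma(s),\gamma(t)) = |t-s| \le Q|t-s|+Q$. The content lies in the lower bound $\hat d(\hat\gamma(s),\hat\gamma(t)) \ge |t-s|/Q - Q$, which I will establish by pushing any path in $\hat X_K$ from $\gamma(s)$ to $\gamma(t)$ down to $\gamma$ via closest-point projection $\pi_\gamma$ and controlling the per-step motion of the projection by the $\hat d$-length of the step.

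Set $r := \Phi(C)^3$ and $N := K(r)$, which is finite by hypothesis. Two preliminary facts follow from $\gamma$ being $C$-contracting: first, by Lemmas \ref{lemma:subsegments} and \ref{lemma:full_strong_contraction_equivalence}, $\gamma$ has $\Phi(C)$-bounded geodesic image; second, by Lemma \ref{lemma:hausdorff_contraction}(2) followed by Lemma \ref{lemma:full_strong_contraction_equivalence}, any geodesic segment whose endpoints lie in $\overline{\mc N}_{\Phi(C)}(\gamma)$ is $r$-quadrangle-contracting. Three iterations of $\Phi$ are required here ($\Phi_{sub}$, then $\Phi_d$, then $\Phi_Q$), which is precisely why the hypothesis involves $\Phi(C)^3$.

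The core estimate is then: \emph{if $(x, y) \in \mc A_K$, then $d(\pi_\gamma(x), \pi_\gamma(y)) \le N + 4\Phi(C)$.} Fix a geodesic $\eta$ from $x$ to $y$ and let $\eta' \subset \eta$ be the maximal sub-segment whose endpoints lie in $\overline{\mc N}_{\Phi(C)}(\gamma)$; if no such $\eta'$ exists then $\eta$ avoids $\overline{\mc N}_{\Phi(C)}(\gamma)$ and bounded geodesic image already gives $\diam(\pi_\gamma(\eta)) \le \Phi(C)$. Otherwise $\eta'$ is $r$-quadrangle-contracting by the second preliminary, hence every sub-segment of $\eta'$ of length at least $3r$ is $r$-thin. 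Since $K(r) \ge 4r + 1 > 3r$, if $\ell(\eta') \ge N$ I could extract a sub-segment of $\eta$ of length $\ge N$ that is $r$-thin, contradicting $(x,y)\in\mc A_K$. Therefore $\ell(\eta') < N$, and the two tail sub-segments of $\eta$ outside $\eta'$ are at distance $>\Phi(C)$ from $\gamma$ by maximality, so each projects to a set of diameter $\le \Phi(C)$; summing the three pieces gives the bound.

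With this in hand, take a nearly $\hat d$-minimizing path $p = x_0, x_1, \ldots, x_n = q$ from $p = \gamma(s)$ to $q = \gamma(t)$ in $\hat X_K$, where each step is either a single $X$-geodesic of length $L_i$ or a single anti-contracting edge of weight $1$ (consolidating consecutive $X$-pieces into one geodesic). A simpler version of the same projection argument gives $d(\pi_\gamma(x_i), \pi_\gamma(x_{i+1})) \le L_i + 4\Phi(C)$ for each $X$-step. Since $p, q \in \gamma$ satisfy $\pi_\gamma(p)=p$ and $\pi_\gamma(q)=q$, summing yields
\[
|t-s| \;=\; d(p, q) \;\le\; \sum_{i=0}^{n-1} d\bigl(\pi_\gamma(x_i), \pi_\gamma(x_{i+1})\bigr) \;\le\; (N + 8\Phi(C))\,\hat d(p, q) + O(\Phi(C)),
\]
and the stated constant $Q = 4(K(\Phi(C)^3) + 6\Phi(C) + 3)$ comfortably absorbs everything to give $\hat d(p,q) \ge |t-s|/Q - Q$. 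The main obstacle is the projection estimate for anti-contracting edges: the geodesic $\eta$ can weave in and out of a neighbourhood of $\gamma$, so one must work with the maximal spanning sub-segment $\eta'$ and transfer strong contraction from $\gamma$ down to $\eta'$ via Lemma \ref{lemma:hausdorff_contraction}(2) before the anti-contraction of $(x,y)$ can be invoked to bound $\ell(\eta')$.
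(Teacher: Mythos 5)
Your proof is correct, and while it shares the paper's overall skeleton --- both arguments bound $|t-s|=d(\gamma(s),\gamma(t))$ by tracking the closest-point projection to $\gamma$ along a near-minimizing path in $\hat X_K$ and controlling the projection's displacement across each anti-contracting step --- the key estimate is executed by a genuinely different mechanism. The paper argues by contradiction from the side of $\gamma$: assuming the projections $x'=\pi_\gamma(x)$, $y'=\pi_\gamma(y)$ are far apart, it uses quadrangle-contraction of $\gamma$ together with Corollary \ref{cor:closet_point_projection_quadrangles} to push two points of $\gamma_{[x',y']}$ onto points $a,b\in[x,y]$ lying in the $\Phi(C)$-neighbourhood of $\gamma$, so that $[x,y]_{[a,b]}$ is long, $\Phi^2(C)$-contracting and hence thin. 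You work from the side of $[x,y]$: you isolate the maximal subsegment $\eta'$ spanning the $\Phi(C)$-neighbourhood of $\gamma$, cap its length at $K(\Phi^3(C))$ using anti-contraction (via the same transfer of contraction, Lemma \ref{lemma:hausdorff_contraction}(2)), and dispose of the two tails with the bounded geodesic image property. This avoids Corollary \ref{cor:closet_point_projection_quadrangles} altogether, and it has the small advantage that your core estimate is applied to pairs that genuinely lie in $\mc A_K$ (the edges the path actually traverses), whereas the paper's reduction to pairs with $\hat d(x,y)\le 1$ passes over the case where the short path uses an anti-contracting edge between points merely close to $x$ and $y$. The one imprecision to repair is your claim that the tails lie at distance $>\Phi(C)$ from $\gamma$: each tail shares an endpoint with $\eta'$, and that endpoint realizes distance exactly $\Phi(C)$ (or the tail degenerates to a point if $x$ or $y$ already lies in the neighbourhood), so the correct statement is distance $\geq\Phi(C)$ --- still sufficient for the bounded geodesic image property as stated in Lemma \ref{lemma:equivalence_strong_contraction1}.
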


Recall that $\Phi$ is defined in Definition \ref{def:phi}.

\begin{proof}
Let $\pi : \hat{X}_K\to X$ be the map that sends $x\in \hat{X}_K$ to (one of) its closest points in $X$. Observe the following; if $\hat{d}(x, y)\leq \frac{1}{4}$, then $\hat{d}(\pi(x), \pi(y))\leq 1$. Let $\tau : \hat{X}_K\to \gamma$ be a map that sends points $x\in X$ to (one of) their closest points in $\gamma$ with respect to the metric $d$. Further for $x\in \hat{X}_K - X$ define $\tau(x) = \tau(\pi(x))$. We will show that for all $x, y\in \hat{X}_K$,
\begin{align}\label{eq:quasi-g1}
    d(\tau(x), \tau(y))\leq Q\hat{d}(x, y) +Q,
\end{align} 
which, when applied to points on $\gamma$, directly implies that $\hat{\gamma}$ is a $Q$--quasi-geodesic. To show \eqref{eq:quasi-g1}, by the triangle inequality, it is enough to show that 
\begin{align}\label{eq:quasi-g2}
    d(\tau(x), \tau(y))\leq \frac{Q}{4},
\end{align} 
for all $x, y\in \hat{X}_K$ with $\hat{d}(x, y)\leq \frac{1}{4}$, or alternatively for all $x, y\in X$ with $\hat{d}(x, y)\leq 1$. We will proceed to show the latter. 

Let $x, y\in X$ be two points with $\hat{d}(x, y) \leq 1$. Define $x' = \tau(x)$ and $y' = \tau(y)$. We aim to show that $d(x', y')\leq Q/4$. Assume that $d(x', y') > Q/4$. Let $a'$ and $b'$ be points on $\gamma_{ [x', y']}$ at distance $2\Phi(C)+1$ from $x'$ and $y'$ respectively. Since $\gamma$ is $C$-contracting, it is $\Phi(C)$-quadrangle-contracting. In particular, considering the quadrangle $([x', y'], [y', y], [y, x], [x', x])$ yields points $a$ and $b$ on $[y', y]\cup[y, x]\cup[x, x']$ which are in the $\Phi(C)$-neighbourhood of $a'$ and $b'$ respectively. By Corollary \ref{cor:closet_point_projection_quadrangles}, $a$ and $b$ lie on $[x, y]$. By Lemma \ref{lemma:hausdorff_contraction}, $[x, y]_{[a, b]}$ is $\Phi^2(C)$-contracting and hence $\Phi^3(C)$-quadrangle-contracting. Consequently, $[x, y]_{[a, b]}$ is  $\Phi^3(C)$-thin. Further, $d(a, b) \geq d(x', y') - (6\Phi(C) +2 ) > K(\Phi^3(C))$, implying that $d(x, y) > 1$ and $(x, y)$ is not $K$--anti-contracting, a contradiction to $\hat{d}(x, y) \leq 1$. 
\end{proof}

The following corollary follows directly from Lemma \ref{lemma:contracting_implies_quasi_geodesic}

\begin{cor}\label{corollary:unbounded_if_strongly_contracting}
    If $X$ contains a strongly-contracting geodesic ray and $K$ is full, then $\hat{X}_K$ is unbounded.
\end{cor}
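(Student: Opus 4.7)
The plan is to apply Lemma \ref{lemma:contracting_implies_quasi_geodesic} directly to a strongly-contracting geodesic ray and observe that its image under $\iota_K$ must be an unbounded subset of $\hat{X}_K$.

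First I would fix a $C$-contracting geodesic ray $\gamma : [0, \infty) \to X$, which exists by assumption. Since $K$ is full, by definition $K(r) < \infty$ for every $r \in \R_{\geq 0}$; in particular $K(\Phi(C)^3) < \infty$. Lemma \ref{lemma:contracting_implies_quasi_geodesic} is stated for arbitrary intervals $I$, so applying it with $I = [0, \infty)$ gives that $\hat{\gamma} : [0, \infty) \to \hat{X}_K$ is a $Q$-quasi-geodesic, where $Q = 4(K(\Phi(C)^3) + 6\Phi(C) + 3)$.

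From the quasi-geodesic inequality, for every $t \geq 0$ we have
\begin{align*}
    \hat{d}(\hat{\gamma}(0), \hat{\gamma}(t)) \geq \frac{t}{Q} - Q,
\end{align*}
so letting $t \to \infty$ shows that the image of $\hat{\gamma}$ is unbounded in $\hat{X}_K$. Consequently $\hat{X}_K$ itself is unbounded, which is exactly the statement of the corollary.

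The proof is essentially a one-line deduction once Lemma \ref{lemma:contracting_implies_quasi_geodesic} is in hand, so there is no genuine obstacle; the only thing to verify is that the lemma is applicable to a ray (not only a segment), which follows from the fact that its proof is entirely local, using only that $\gamma$ is a $C$-contracting geodesic and hence all of its subsegments are $\Phi(C)$-quadrangle-contracting.
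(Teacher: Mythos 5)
Your proof is correct and is exactly the argument the paper intends: the paper gives no separate proof, stating only that the corollary ``follows directly from Lemma \ref{lemma:contracting_implies_quasi_geodesic}'', and your deduction (fullness of $K$ gives $K(\Phi(C)^3)<\infty$, so the ray's image is a $Q$-quasi-geodesic and hence unbounded) is the intended one-line argument. Your remark that the lemma applies to rays and not just segments is a reasonable sanity check and does not affect correctness.
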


Next we want to prove Lemma \ref{lemma:quasi-geodesic_image_implies_strongly_contracting}, which is a partial converse of Lemma \ref{lemma:contracting_implies_quasi_geodesic}. Before we start with the proof we introduce the notion of separatedness, which will prove useful in the proof. 

\begin{defn}[separated]
    Let $x, y\in X$ be points. We say that $x$ and $y$ are $(r, K)$--separated for some $r\geq 0$ if there exists a geodesic $\gamma$ from $x$ to $y$ and a subgeodesic $\gamma'\subset \gamma$ which is at least $K(r)$--long and $r$--thin. If it is clear which contraction gauge we are using, we simply say that $x$ and $y$ are $r$--separated. We call the geodesic $\gamma$ an \emph{$r$--witness} of the separation of $x$ and $y$ and we call the point $m_{\gamma'}$ the \emph{pinch point}.
\end{defn}

Note that if $\hat{d}(x, y)>1$, then $x$ and $y$ are $r$--separated for some $r\geq 0$.

\begin{figure}
    \centering
    \includegraphics{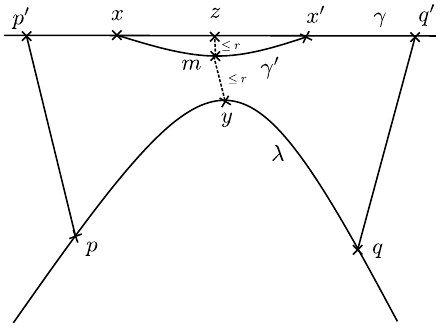}
    \caption{Quasi-geodesics are stongly contracting}
    \label{fig:quasi-geo-to-contr}
\end{figure}

\begin{lemma}\label{lemma:quasi-geodesic_image_implies_strongly_contracting}
Let $\gamma: I\to X$ be a geodesic. If $\hat{\gamma}$ is a $Q$-quasi-geodesic, then $\gamma$ is $\Phi(27Q^2)$-contracting. 
\end{lemma}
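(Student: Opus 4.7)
My plan is to argue the contrapositive via the quadrangle-contraction reformulation of strong contraction (Lemma \ref{lemma:full_strong_contraction_equivalence}). Since $\Phi \ge \Phi_Q$, it suffices to show that if $\hat{\gamma}$ is a $Q$-quasi-geodesic then $\gamma$ is $27Q^2$-quadrangle contracting; combining this with Lemma \ref{lemma:full_strong_contraction_equivalence} will give $\Phi_Q(27Q^2)$-contracting, which is at most $\Phi(27Q^2)$-contracting. Suppose for contradiction that some subsegment $\gamma_0 \subset \gamma$ of length at least $3\cdot 27Q^2 = 81Q^2$ fails to be $27Q^2$-thin, witnessed by a quadrangle $(\gamma_1,\gamma_2,\gamma_3,\gamma_4)$ with $\gamma_0 \subset \gamma_1$ such that $\gamma_2\cup\gamma_3\cup\gamma_4$ misses the closed ball $\bar B_{27Q^2}(m)$, where $m:=m_{\gamma_0}$. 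The $Q$-quasi-geodesic hypothesis provides the lower bound $\hat d(p,q) \ge \ell(\gamma_0)/Q - Q \ge 80Q$ for $p,q$ the endpoints of $\gamma_0$, and similarly large lower bounds for $\hat d(p,m)$ and $\hat d(m,q)$.

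To derive a contradiction I would first show that the midpoint $m$ is at bounded $\hat d$-distance from the bypass $\gamma_2\cup\gamma_3\cup\gamma_4$. Mimicking the triangle-thinness argument from the proof of Proposition \ref{prop:contraction_space_hyp_for_graphs} — which established that in any geodesic triangle $(\gamma,\gamma',\gamma'')$ in $X$ every point of $\gamma$ lies in $\hat{\mathcal N}_1(\gamma'\cup\gamma'')$ — I would subdivide the quadrangle along a diagonal $[\gamma_1^-, \gamma_2^+]$ into two geodesic triangles and apply the claim to each. This produces a point $a\in\gamma_2\cup\gamma_3\cup\gamma_4$ with $\hat d(m,a)\le 2$, while by construction $d(m,a)>27Q^2$.

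Next, to turn this into a violation of the quasi-geodesic inequality for $\hat\gamma$, I would unpack a short $\hat d$-path from $m$ to $a$: such a path consists of at most two ``added'' edges (each of $\hat d$-length $1$, connecting a $K$-anti-contracting pair) interleaved with $X$-pieces of total $d$-length at most $2$. Comparing with the $d$-gap, at least one added edge realizes a $K$-anti-contracting pair $(u,v)\in\mathcal{A}_K$ with $d(u,v)\ge \tfrac{1}{2}(27Q^2-2)\ge 13Q^2$, and one endpoint (say $u$) lies at $d$-distance at most $2$ from $m\in\gamma$. Using Morseness of the quasi-geodesic $\hat\gamma$ in the hyperbolic space $\hat X_K$ (with Morse gauge depending only on $Q$ and $\delta_0$), one then transfers the far endpoint $v$ to a point $v'\in\gamma$ with $\hat d(v,v')\le C_0$ for a universal constant $C_0$. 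Plugging $d(u,v')\ge 13Q^2 - O(1)$ and $\hat d(u,v')\le 1+C_0$ into the quasi-geodesic lower bound $\hat d\ge d/Q - Q$ gives $13Q^2 \le Q^2 + O(Q)$, which fails once the numerical constant $27$ has been chosen large enough to dominate the linear-in-$Q$ terms.

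The main obstacle is the transfer step at the end: lifting the far endpoint $v$ of the anti-contracting edge back onto $\gamma$ with only additive $\hat d$-loss. This is where the hyperbolicity of $\hat X_K$ and the Morseness of $\hat\gamma$ there must be used carefully, and where the precise constant $27$ in the statement is dictated by the bookkeeping of projection estimates.
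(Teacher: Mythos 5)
Your reduction to quadrangle-contraction and the first half of the argument (splitting the bad quadrangle along a diagonal and applying the claim from the proof of Proposition \ref{prop:contraction_space_hyp_for_graphs} to get a point $a$ on the bypass with $\hat d(m,a)\le 2$ while $d(m,a)>27Q^2$) are fine. The gap is the transfer step, and it is not merely an obstacle to be handled carefully --- it fails for two structural reasons. First, the far endpoint $v$ of the anti-contracting edge is an arbitrary point of $X$: Morseness of $\hat\gamma$ in $\hat X_K$ only controls quasi-geodesics \emph{both} of whose endpoints lie near $\hat\gamma$, whereas your short path from $m$ to $a$ terminates on the bypass $\gamma_2\cup\gamma_3\cup\gamma_4$, whose position relative to $\hat\gamma$ in $\hat X_K$ is completely unknown (the sides of the quadrangle witnessing non-thinness need not be subsegments of $\gamma$, and their images in $\hat X_K$ can wander arbitrarily far from $\hat\gamma$). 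So no $v'\in\gamma$ with $\hat d(v,v')\le C_0$ need exist. Second, even granting such a $v'$, your final inequality needs $d(u,v')\ge 13Q^2-O(1)$, which requires $d(v,v')=O(1)$; you would only have $\hat d(v,v')=O(1)$, and $\hat d$-closeness never implies $d$-closeness --- that is the whole point of the construction. (There is also a smaller issue: when the short path uses two added edges, the edge carrying the large $d$-jump need not be the first one, so its near endpoint need not be within $d$-distance $2$ of $m$.) The root difficulty is that your contradiction hypothesis only produces a pair ($m$ and $a$) that is $\hat d$-close and $d$-far with $a$ \emph{off} $\gamma$, while the quasi-geodesic hypothesis only constrains pairs of points \emph{on} $\gamma$.

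The paper's proof avoids this by arguing in the forward direction for the bounded geodesic image property. Given a geodesic $\lambda$ whose projection to $\gamma$ has diameter at least $27Q^2$, the quasi-geodesic hypothesis produces two points $x,x'$ \emph{on} $\gamma$ with $\hat d(x,x')=2$ and $d(x,x')\le 3Q^2$; since $\hat d(x,x')>1$, the pair $(x,x')$ is not anti-contracting and hence admits an $r$-thin witness subsegment with $K(r)\le 3Q^2$, forcing $r\le Q^2$; the pinch point of that witness then acts as a bottleneck that pulls $\lambda$ within $2r$ of $\gamma$ via Corollary \ref{cor:closet_point_projection_quadrangles}. Note that the only information ever extracted from the hypothesis is about pairs of points on $\gamma$ itself, which is exactly what being a parametrized quasi-geodesic gives. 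If you want to salvage your contrapositive framing, you would need to produce the thin witness between two points of $\gamma$ rather than trying to pull the bypass back onto $\gamma$.
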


\begin{proof}
    Let $\gamma : I\to X$ be a geodesic such that $\hat{\gamma}$ is a $Q$-quasi-geodesic. We show that $\gamma$ has the $(27Q^2)$--bounded geodesic image property, and hence is $\Phi(27Q^2)$--contracting. 
    
    Let $\lambda : J\to X$ be another geodesic. Assume that there exist points $p, q$ on $\lambda$ whose closest point projections $p'\in \pi_\gamma(p)$ and $q'\in \pi_\gamma(q)$ onto $\gamma$ satisfy $d(p', q')\geq 27Q^2$. Since $\hat{\gamma}$ is a $Q$--quasi-geodesic, there exist points $x, x'$ on $\gamma_{[p', q']}$ with $\hat{d}(x, x') = 2$, $12 Q^2 \leq d(p', x)\leq d(p', x')$ and $12 Q^2\leq d(x', q')$. Since $\hat{\gamma}$ is a $Q$--quasi-geodesic, $d(x, x')\leq 3Q^2$. Hence the points $x$ and $x'$ are $r$--separated for some $r$ with $K(r)\leq 3Q^2$ (and thus $r\leq Q^2$). Let $\gamma'$ be an $r$--witness with pinch point $m$ of the separation of $x$ and $x'$. This is depicted in Figure \ref{fig:quasi-geo-to-contr}. Consider the quadrangle $\mc Q = (\gamma_{[p', x]}\circ\gamma'\circ\gamma_{[x', q']}, [q', q], \lambda_{[q, p]} ,[p, p'])$ and the bigon $\mc B = (\gamma', \gamma_{[x, x']})$. Since $m$ is the pinch point, there exists a point $y\in [q', q]\cup\lambda_{[q, p]} \cup[p, q]$ such that $d(y, m)\leq r$. Further, there exists a point $z\in \gamma_{[p, x']}$ with $d(z, m)\leq r$. Hence $d(y, z)\leq 2r$ but $d(z, \{p', q'\})\geq 4r$. By Corollary \ref{cor:closet_point_projection_quadrangles}, we can assume that $y\in \lambda_{[q, p]}$. 
    
    Hence $d(\lambda, \gamma)\leq 2r\leq 6Q^2$ implying that $\gamma$ has $(27Q^2)$-bounded geodesic image and hence is $\Phi(27Q^2)$--contracting by Lemma \ref{lemma:equivalence_strong_contraction1}.
\end{proof}

\subsection{Alternative definitions}\label{sec:alternative-definitions}

It turns out that in the definiton of the $K$-contraction space $\hat{X}_K$, there were some choices involved. Most prominently, how to define what ``not strongly-contracting enough'' should mean. In this section we explore other ways we could have defined the $K$--contraction space $\hat{X}_K$.  

Denote by $\mc B_K\subset X\times X$ the set of pairs $(x, y)$ with $d(x, y)\geq 1$ and such that for all geodesics $\gamma$ from $x$ to $y$ the following holds. No subsegment $\gamma'$ of $\gamma$ of length $\ell(\gamma')\geq K(r)$ is \textbf{$r$--quadrangle-contracting}. 

Note that the only difference between the definition of $\mc A_K$ and $\mc B_K$ is that we replaced ``$r$--thin'' by ``$r$--quadrangle-contracting''. Since any $r$--quadrangle-contracting geodesic is also $r$--thin, we have that $\mc A_K\subset \mc B_K$.

If we change the definition of anti-contracting, separated and witness as described below and change the definition of $\hat{X}_K$ accordingly, then all the results from Section \ref{sec:construction}, Section \ref{sec:acylindricity} and Section \ref{sec:diameter} still hold. The proofs work analogously.
\begin{itemize} 
    \item (anti-contracting) A pair $(x, y)\in X\times X$ is $K$-anti-contracting if and only if it is contained in $\mc B_K$.
    \item (separated) A pair of points $(x, y)$ is $(r, K)$--separated if there exists a geodesic $\gamma$ form $x$ to $y$ and a subgeodesic $\gamma'$ of $\gamma$ which is $r$--quadrangle contracting and at least $K(r)$--long.
    \item (witness) A geodesic $\gamma$ from $x$ to $y$ is a witness of a $(K, r)$-separation of the pair $(x, y)$ if there exists a subgeodesic $\gamma'\subset \gamma$ which is $r$--quadrangle contracting and at least $K(r)$--long.
\end{itemize}

We call the $K$-contraction space obtained from the definitions above the \emph{alternative $K$--contraction space} and denote it by $\tilde{X}_K$ (and its metric by $\tilde{d}$). The example below shows that while the $K$--contraction space and the alternative $K$--contraction space share quite a few properties, they are fundamentally different. 

\begin{ex}
    Let $X$ be the Cayley graph of the free product $G = \Z \ast \Z^2$ with respect to the standard generating set $S = \{a, b, c\}$, where $\Z = \langle a \rangle$ and $\Z^2 = \langle b, c\rangle$. Let $K$ be a full contraction gauge. We will show that the $K$--contraction space $\hat{X}_K$ is canonically quasi-isometric to the Bass-Serre tree $T$ of $X$, while the alternative $K$--contraction space $\tilde{X}_K$ is not canonically quasi-isometric to the Bass-Serre tree $T$. Consequently, the $K$-contraction space $\hat{X}_K$ is not canonically quasi-isometric to the alternative $K$-contraction space $\tilde{X}_K$.
    
    Let $Y$ be the Cayley graph of $\Z^2 = \langle b, c\rangle$. $X$ contains various copies of $Y$, which we will call \emph{sheets}. Consider the Bass-Serre tree $T$ of $X$. The canonical projection $\pi : X\to T$ sends every sheet $S$ of $X$ to a distinct vertex $v$ of $T$. We say that the distance between two sheets is the distance between their projections.

    \begin{obs}
        If two points $x, y\in X$ lie in the same sheet, then $(x, y)\in \mc A_K\subset \mc B_K$. 
    \end{obs}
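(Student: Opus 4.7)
The plan is to verify the defining property of $\mc A_K$ directly. Fix any geodesic $\gamma$ from $x$ to $y$, any subsegment $\gamma'\subset \gamma$, and any $r\geq 0$ with $\ell(\gamma')\geq K(r)$. I will exhibit a quadrangle $\mc Q=(\gamma',\gamma_2,\gamma_3,\gamma_4)$ whose other three sides miss the closed $r$-ball around $m_{\gamma'}$, showing that $\gamma'$ is not $r$--thin. First I reduce to a purely $L^1$--geometric problem in $\Z^2$: since $X$ is the Cayley graph of the free product $G=\Z\ast\Z^2$, any geodesic between two points of a sheet stays in that sheet, because any detour leaving the sheet must traverse an $a$--edge and later an $a^{-1}$--edge, which gives no saving in sheet coordinates. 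Hence $\gamma$, $\gamma'$, and $m_{\gamma'}$ all lie in one sheet $S$, isometric to the Cayley graph of $\Z^2$ with the $L^1$ metric.

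After applying an isometry of $X$ (left multiplication together with the reflections $b\mapsto b^{-1}$ and $c\mapsto c^{-1}$) I identify $S$ with $\Z^2$ so that the endpoints of $\gamma'$ are $p=(0,0)$ and $q=(a,b)$ with $a,b\geq 0$, $a+b=\ell:=\ell(\gamma')$, and $m_{\gamma'}=(a_m,b_m)$ with $0\leq a_m\leq a$, $0\leq b_m\leq b$ and $a_m+b_m=\ell/2$. Setting $R:=r+1$, I take
\begin{align*}
    \gamma_2 &: \text{any monotone $L^1$--staircase from $q$ to $(a+R,b+R)$},\\
    \gamma_3 &: \text{the $L^1$--geodesic $(a+R,b+R)\to(-R,b+R)\to(-R,-R)$},\\
    \gamma_4 &: \text{any monotone $L^1$--staircase from $(-R,-R)$ to $p$}.
\end{align*}
All three segments lie in $S$, hence are $X$--geodesics, and $(\gamma',\gamma_2,\gamma_3,\gamma_4)$ is a closed quadrangle. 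A direct computation using $a\geq a_m$, $b\geq b_m$, $a_m+b_m=\ell/2$, and $\ell/2\geq 2r+\tfrac12>r$ shows: every point $(a+s,b+t)$ of $\gamma_2$ has $L^1$--distance $\geq(a-a_m)+(b-b_m)=\ell/2>r$ to $m_{\gamma'}$; every point $(-R+s,-R+t)$ of $\gamma_4$ has distance $\geq a_m+b_m=\ell/2>r$; every point on the horizontal leg of $\gamma_3$ has $y$--gap $\geq b+R-b_m\geq R>r$; and every point on its vertical leg has $x$--gap $\geq R+a_m\geq R>r$. Thus $\gamma_2\cup\gamma_3\cup\gamma_4$ avoids the closed $r$--ball around $m_{\gamma'}$, so $\gamma'$ is not $r$--thin, and $(x,y)\in\mc A_K$.

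The main obstacle is designing a detour that stays clear of the $r$--ball around $m_{\gamma'}$ \emph{regardless} of where the midpoint sits on $\gamma'$. A naive detour that only loops ``above and to the left'' of $\gamma'$ fails when $a_m$ or $b_m$ is very small, since the returning leg then passes close to $m_{\gamma'}$. Going the distance $R=r+1$ beyond $\gamma'$ in every coordinate direction before returning handles every possible position of the midpoint uniformly, and this is the key geometric input that forces $(x,y)$ into $\mc A_K$.
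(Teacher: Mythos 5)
Your proof is correct and is essentially the argument the paper has in mind: the paper reduces the observation to the statement that a geodesic lying in a single sheet is not $r$--thin for any $r<\ell/2$ (using $K(r)\geq 4r+1$ to get $\ell/2>r$), and your explicit $L^1$--quadrangle routed at distance $R=r+1$ around the sheet is precisely the detour that observation leaves implicit. The only cosmetic caveat is the degenerate case $d(x,y)<1$, which is excluded by the definition of $\mc A_K$ and is implicitly ignored by the paper as well.
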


    The observation above is a special case of the observation below.

    \begin{obs}
        If $\gamma: I\to X$ is a geodesic which is completely contained in a single sheet, then $\gamma$ is not $r$--thin for any $r < \ell(\gamma)/2$.
    \end{obs}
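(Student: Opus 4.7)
The plan is to exhibit, for any $r < \ell(\gamma)/2$, a geodesic quadrangle containing $\gamma$ whose other three sides stay at $\ell^1$-distance strictly greater than $r$ from $m_\gamma$. The construction will live entirely inside the sheet $S \cong \Z^2$ containing $\gamma$. Two standard facts make this possible: the sheet $S$ is isometrically embedded in $X$ (for elements of a factor of a free product, normal form guarantees that any geodesic between them uses only letters of that factor), and geodesics in the Cayley graph of $\Z^2$ with generators $b, c$ are precisely the monotone lattice paths.

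To normalize coordinates, I left-translate by a suitable element of $G$ (an isometry of $X$) to move $S$ to the standard sheet $\Z^2 \leq G$ and $\gamma$ to start at $(0,0)$; applying, if necessary, one of the group automorphisms $b \mapsto b^{-1}$ or $c \mapsto c^{-1}$ of $G$ (each of which fixes $\mc S \cup \mc S^{-1}$ setwise and hence induces an isometry of $X$), I may further assume that $\gamma$ is a monotone lattice path from $(0,0)$ to $(m,n)$ with $m, n \geq 0$ and $m + n = \ell(\gamma)$. Then $m_\gamma = (a,b)$ with $0 \leq a \leq m$, $0 \leq b \leq n$, and $a + b = \ell(\gamma)/2$. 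Now fix any integer $L > r$ and consider the quadrangle with corners $A = (-L, 0)$, $B = (m+L, n)$, $C = (m+L, -L)$, $D = (-L, -L)$: take $\gamma_1$ to be the monotone extension of $\gamma$ by $L$ rightward steps on each side (so $\gamma_1$ traces $A \to (0,0) \to \gamma \to (m,n) \to B$), and let $\gamma_2, \gamma_3, \gamma_4$ be the straight vertical and horizontal segments $B \to C$, $C \to D$, $D \to A$ respectively. Each side is a monotone lattice path between its endpoints, hence a geodesic in $S$ (and therefore in $X$), and $\gamma \subseteq \gamma_1$ by construction. The $\ell^1$-distance from $m_\gamma = (a,b)$ to each of the other sides is then immediate: points of $\gamma_2$ have $x$-coordinate $m+L$, giving distance $\geq m + L - a \geq L > r$; points of $\gamma_3$ have $y$-coordinate $-L$, giving distance $\geq b + L \geq L > r$; and points of $\gamma_4$ have $x$-coordinate $-L$, giving distance $\geq a + L \geq L > r$. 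Hence $\gamma_2 \cup \gamma_3 \cup \gamma_4$ misses the closed $r$-ball around $m_\gamma$, and $\gamma$ is not $r$-thin.

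Nothing in this argument is particularly subtle; the one design choice that matters is extending $\gamma$ on both sides to form $\gamma_1$. Without the rightward extensions, $\gamma_2$ would start at $(m,n)$ and its minimum distance to $m_\gamma$ would collapse to $m - a$, which can be zero (for instance when $\gamma$ ends with a vertical run), with a symmetric failure on the $\gamma_4$ side. The two extensions of length $L$ are exactly what push the three relevant sides uniformly out of the $r$-ball, independently of the particular monotone path $\gamma$.
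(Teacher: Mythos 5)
Your construction is correct, and it supplies an argument the paper omits entirely: the observation is stated inside the example without proof, so there is no ``paper approach'' to diverge from, and your explicit flat quadrangle is exactly the natural justification. The two facts you invoke (isometric embedding and convexity of sheets in a free product, and the characterization of geodesics in the grid as monotone lattice paths) are standard and used correctly, and the key design choice --- extending $\gamma$ by $L$ steps on both ends before closing up the quadrangle at coordinate distance $L$ --- is precisely what is needed to push all three remaining sides out of the $r$-ball regardless of the shape of $\gamma$. Note that your argument in fact proves something stronger than the statement: since $L$ can be taken larger than any given $r$, a geodesic contained in a single sheet is not $r$-thin for \emph{any} $r\geq 0$, so the hypothesis $r<\ell(\gamma)/2$ is never used (it is only what the surrounding example needs, via $K(r)\geq 4r+1$). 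The one cosmetic point is that you implicitly place the endpoints of $\gamma$ at vertices; if an endpoint lies in the interior of an edge, extend $\gamma$ within the sheet to a monotone path with vertex endpoints and run the same construction --- the estimates only use that $m_\gamma$ lies in the bounding box of $\gamma_1$, so nothing changes.
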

    
    Further note that any geodesic $\gamma$ whose midpoint $m_{\gamma}$ is contained in the interior of an edge $e$ of $X$ labelled by $a$ is 0-thin (any path from one endpoint of $\gamma$ to the other endpoint of $\gamma$ has to go through $m_{\gamma}$). This leads us to the following observation.
   
    \begin{obs}
        If two points $x, y\in X$ satisfy that their projections $\pi(x)$ and $\pi(y)$ in $T$ have distance at least $K(0)+1$, then $(x, y)\not\in \mc A_K$.
    \end{obs}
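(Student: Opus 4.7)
The plan is to exhibit, in any geodesic from $x$ to $y$, a subsegment of length at least $K(0)$ whose midpoint lies in the interior of some $a$-edge. By the remark preceding this observation (that geodesics whose midpoint sits on an $a$-edge are $0$-thin, because the $a$-edge is a cut edge), such a subsegment will be $0$-thin, witnessing the failure of the defining condition of $\mc A_K$ at $r = 0$ and hence giving $(x,y) \notin \mc A_K$.

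First I would convert the tree-distance hypothesis into a count of $a$-edges along a geodesic. Under $\pi : X \to T$, each $a$-edge of $X$ corresponds to an edge of $T$ and each sheet to a vertex, so any geodesic $\gamma$ from $x$ to $y$ must traverse at least $K(0)+1$ distinct $a$-edges of $X$. Parameterizing $\gamma$ by arc length, I would list the midpoints of these $a$-edges as $s_1 < s_2 < \cdots < s_n$ with $n \geq K(0)+1$; since $a$-edges have length one and are pairwise disjoint, $s_{i+1} - s_i \geq 1$ for all $i$.

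Next I would pick an index $j$ satisfying $j - 1 \geq K(0)/2$ and $n - j \geq K(0)/2$, which exists because $n \geq K(0)+1$, and set $\gamma' := \gamma|_{[s_j - K(0)/2,\, s_j + K(0)/2]}$. A short spacing computation, using $s_j \geq s_1 + (j-1)$ on the left and $s_n - s_j \geq n - j$ on the right, shows that $\gamma' \subset \gamma$. By construction $\gamma'$ has length exactly $K(0)$ and its midpoint $s_j$ is the midpoint of the $a$-edge $e_j$, so the remark above gives that $\gamma'$ is $0$-thin. Since $K(0) \geq 1$ from the definition of a contraction gauge, we also have $d(x,y) \geq K(0)+1 \geq 1$, completing the verification that $(x,y) \notin \mc A_K$.

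There is no real technical obstacle here: the whole argument is a pigeonhole on the positions of $a$-edges along $\gamma$, with all the geometric substance already packaged in the preceding remark that $a$-edges are cut edges of $X$.
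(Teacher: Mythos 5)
Your argument is correct and takes essentially the same route as the paper: the paper likewise produces a point $z$ on $[x,y]$ lying in the interior of an $a$-edge with $d(x,z)\geq K(0)/2$ and $d(z,y)\geq K(0)/2$, and concludes that the length-$K(0)$ subsegment of $[x,y]$ centred at $z$ is $0$-thin because $z$ is a cut point, so $(x,y)\notin \mc A_K$. Your pigeonhole on the midpoints of the traversed $a$-edges is just a more explicit way of locating that point.
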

    Indeed, if the distance of $\pi(x)$ and $\pi(y)$ is at least $K(0)$, then there exists a point $z$ on $[x, y]$ which is in the interior of an edge labelled by $a$ and satisfies $d(x, z)\geq K(0)/2$ and $d(z, y)\geq K(0)/2$. Consequently, the subgeodesic $\gamma'$ of $[x ,y]$ of length $K(0)$ and midpoint $z$ is $0$--thin, implying that $(x, y)\not \in \mc A_K$.

    \begin{fact}
    The $K$--contraction space $\hat{X}_K$ is canonically quasi-isometric to the Bass-Serre tree $T$. In particular, any geodesic $\gamma: I\to X$ whose projection to $T$ is unbounded satisfies that $\hat{\gamma}$ is unbounded.
    \end{fact}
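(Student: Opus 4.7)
The plan is to show that the canonical sheet-projection $\pi : X \to T$ extends to a quasi-isometry $\hat{\pi}: \hat{X}_K \to T$; the ``in particular'' clause then follows immediately by applying the lower bound of a quasi-isometric embedding to pairs of points on $\gamma$ with arbitrarily large $T$-distance.

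For the coarsely Lipschitz direction, I would extend $\pi$ to $\hat{X}_K$ by sending each added anti-contracting edge to a $T$-geodesic between the images of its endpoints. It then suffices to bound $d_T(\pi(x), \pi(y))$ when $\hat{d}(x, y) \leq 1$ and $x, y \in X$. If $d(x, y) \leq 1$ then $d_T(\pi(x), \pi(y)) \leq 1$ (at most one $a$-edge is crossed); otherwise $(x, y) \in \mc A_K$, and the contrapositive of the third observation gives $d_T(\pi(x), \pi(y)) \leq K(0)$. A standard chaining argument along any $\hat{d}$-geodesic (breaking it into its $X$-pieces and added edges) then yields the desired linear bound.

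For the reverse inequality, I would invoke the normal form for geodesics in the Cayley graph of a free product. Given $x, y \in X$ with $d_T(\pi(x), \pi(y)) = n$, a geodesic from $x$ to $y$ in $X$ visits the tree-geodesic sheets $S_0, \ldots, S_n$ in order, crossing exactly $n$ edges labelled $a$ and producing $n+1$ in-sheet subsegments with entry/exit points $p_i, q_i \in S_i$, where $p_0 = x$, $q_n = y$, and $q_i$ is joined to $p_{i+1}$ by an $a$-edge. The first observation bounds each $\hat{d}(p_i, q_i)$ by $1$, and each $a$-edge also has $\hat{d}$-length at most $1$, so $\hat{d}(x, y) \leq 2n + 1$. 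Combined with the coarse surjectivity of $\pi$ (every sheet vertex has a preimage), this makes $\hat{\pi}$ a quasi-isometry.

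The main subtlety I anticipate is being explicit about the free-product geodesic normal form---specifically, that a geodesic in $X$ from $x$ to $y$ crosses exactly $d_T(\pi(x), \pi(y))$ many $a$-edges and that the in-sheet and $a$-edge portions interleave in the expected way. This is standard for Cayley graphs of free products but deserves to be invoked cleanly; once it is in hand, the rest of the argument is routine bookkeeping with the three observations about $\mc A_K$ and $T$-distances.
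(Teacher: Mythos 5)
Your proposal is correct and follows essentially the same route as the paper, which simply extends $\pi$ over the added edges and asserts that the three observations make $\hat{\pi}$ a $(K(0)+2)$--quasi-isometry; you are filling in exactly the bookkeeping the paper leaves implicit (reduce the Lipschitz bound to pairs with $\hat{d}\leq 1$, split into the cases $d(x,y)\leq 1$ and $(x,y)\in\mc A_K$, and use the normal form plus the first observation for the reverse inequality). One small correction to your decomposition: a geodesic in $X$ may traverse a run $a^{k}$ with $\abs{k}\geq 2$ inside a single $\langle a\rangle$--coset, so the in-sheet pieces are separated by $a$--runs rather than single $a$--edges; since each of the $n$ $a$--edges still contributes at most $1$ to $\hat{d}$ and the number of in-sheet pieces is still at most $n+1$, your bound $\hat{d}(x,y)\leq 2n+1$ is unaffected.
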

   
    This can be seen as follows. Extend the projection $\pi : X\to T$ to a projection $\hat{\pi} : \hat{X}_K\to T$, where $\hat{\pi}(x)$ for points $x\in \hat{X}_K - X$ is defined as $\pi(y)$, for some point $y\in X$ closest to $x$. The above observations show that $\hat{\pi}$ is a $(K(0)+2)$--quasi-isometry.  

    \begin{fact}
        The canonical projection $\tilde{\pi} : \tilde{X}_K\to T$ (defined analogously as $\hat{\pi}$) is not a quasi-isometry. In particular, there exists a geodesic $\gamma: I \to X$ whose projection $\pi(\gamma)$ to the Bass-Serre tree $T$ is unbounded but whose projection $\tilde{\gamma}$ to the alternative contraction space $\tilde{X}_K$ is bounded.
    \end{fact}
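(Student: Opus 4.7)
The plan is to exhibit an explicit geodesic ray $\gamma\colon[0,\infty)\to X$ whose Bass--Serre projection $\pi(\gamma)$ is unbounded in $T$ but whose image $\tilde{\gamma}$ in $\tilde{X}_K$ has finite $\tilde{d}$-diameter. Since $\tilde\pi$ restricts on $X\subset\tilde X_K$ to $\pi$, such a $\gamma$ directly obstructs $\tilde\pi$ from being a quasi-isometry, and a fortiori yields the ``in particular'' clause.

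The first step is the choice of $\gamma$. I would take $g=ab$ and let $\gamma$ follow the forward ray of its translation axis from the identity, so that $\gamma$ successively visits the vertices $e,a,ab,aba,abab,\ldots$. Each prefix is in free-product normal form, so $\gamma$ is a genuine geodesic in $X$, and each $a$-edge of $\gamma$ crosses between two different $\Z^2$-sheets, so $\pi(\gamma)$ is an unbounded (indeed infinite) ray in $T$.

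The key step is to establish that $\gamma$ is uniformly strongly contracting. I would argue by hand that the axis of $ab$ meets each $\Z^2$-sheet of $X$ in at most a single $b$-edge and each $\Z$-coset in at most a single $a$-edge; consequently, for any point $p\in X$, the closest-point projection of $p$ onto $\gamma$ is contained in one such intersection and has uniformly bounded diameter. This gives $C$-contraction of $\gamma$ for a single constant $C=C(g)$. Lemma \ref{lemma:subsegments} then propagates this to every subsegment of $\gamma$ at the cost of replacing $C$ by $\Phi(C)$, and Lemma \ref{lemma:full_strong_contraction_equivalence} upgrades $\Phi(C)$-contraction to $r_0$-quadrangle-contraction with $r_0:=\Phi^2(C)$.

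With this in hand the conclusion is immediate. Given any $x,y\in\gamma$ with $d(x,y)\geq K(r_0)$, the subsegment $\gamma_{[x,y]}$ is $r_0$-quadrangle-contracting and at least $K(r_0)$-long, hence a witness of the $(r_0,K)$-separation of $(x,y)$ in the alternative sense. Therefore $(x,y)\in\mc B_K$, so $\tilde d(x,y)\leq 1$, and consequently the $\tilde{d}$-diameter of $\tilde\gamma$ is at most $K(r_0)+2$. The main technical obstacle is the first part of the key step, namely the uniform strong contraction of the axis of $ab$; this is folklore for free products (or equivalently, for the relatively hyperbolic pair $(G,\{\Z^2\})$) and can be verified directly from the tree-like structure of $X$ via the observations already made earlier in the example.
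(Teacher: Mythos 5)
There is a fatal inversion of logic at the heart of your argument. In the alternative construction, the pairs that get joined by a length-one edge are the \emph{anti-contracting} pairs, i.e.\ the pairs $(x,y)\in\mc B_K$ for which \emph{no} subsegment of any geodesic from $x$ to $y$ of length at least $K(r)$ is $r$-quadrangle-contracting. Exhibiting a subsegment of $\gamma_{[x,y]}$ that is $r_0$-quadrangle-contracting and at least $K(r_0)$-long shows that $(x,y)$ is $(r_0,K)$-\emph{separated}, hence precisely that $(x,y)\notin\mc B_K$; it gives no upper bound on $\tilde d(x,y)$ whatsoever. So your final step ``therefore $(x,y)\in\mc B_K$, so $\tilde d(x,y)\le 1$'' asserts the exact opposite of what you established.

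This is not a repairable slip, because the example itself points the wrong way. The axis of $ab$ is indeed uniformly strongly contracting (it meets every peripheral coset in a uniformly bounded set), but for exactly that reason the alternative-space analogue of Lemma \ref{lemma:contracting_implies_quasi_geodesic} (which holds, as noted in Section \ref{sec:alternative-definitions}) shows that its image in $\tilde{X}_K$ is an unbounded quasi-geodesic --- the opposite of bounded. To produce a geodesic that is bounded in $\tilde{X}_K$ yet has unbounded tree projection you must do the reverse of what you did: choose $\gamma$ so that \emph{every} subsegment of length at least $K(r)$ fails to be $r$-quadrangle-contracting, for every $r$. The paper achieves this with the ray labelled $b^{5^1}ab^{5^2}ab^{5^3}\cdots$, whose excursions into the $\Z^2$-sheets grow without bound: any candidate witness $\gamma'$ of length at least $K(r)$ contains a single-sheet subsegment of length more than $3r$, which is not $r$-thin because one can detour through the flat, so $\gamma'$ is not $r$-quadrangle-contracting. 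Hence every pair of points on that ray at distance at least $1$ lies in $\mc B_K$, its image has $\tilde d$-diameter $1$, and yet its projection to $T$ is unbounded.
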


    Consider the geodesic $\gamma: I\to X$ (starting at any point, say $x_0$) and labelled by 
    \begin{align}
        b^{5^1}ab^{5^2}ab^{5^3}a \ldots.
    \end{align}
    \begin{obs}
        We have that $\pi(\gamma)$ is unbounded but $\tilde{\gamma}$ is bounded.
    \end{obs}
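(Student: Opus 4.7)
The plan is to prove the two statements separately. For $\pi(\gamma)$ unbounded: each letter $a$ in the label of $\gamma$ corresponds to traversing a cut edge of $X$, which projects to a distinct non-trivial edge of the Bass-Serre tree $T$; since the label of $\gamma$ contains infinitely many $a$'s, $\pi(\gamma)$ traces a non-backtracking path of unbounded length in $T$.

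The substance of the proof is showing $\tilde{\gamma}$ is bounded. My plan is to show that for any two points $x, y \in \gamma$ with $d(x, y) \geq 1$ the pair $(x, y)$ lies in $\mc B_K$, which gives $\tilde{d}(x, y) \leq 1$ and hence $\widetilde{\diam}(\gamma) \leq 1$. The key structural claim I would establish first is: a subgeodesic $\sigma'$ of a geodesic in $X = \Z \ast \Z^2$ is $r$-quadrangle-contracting if and only if every maximal subsegment of $\sigma'$ contained in a single $\Z^2$-sheet (a \emph{sheet-piece}) has length at most $2r$. One direction follows by constructing a ``wide'' quadrangle in $\Z^2$ around any geodesic of length $> 2r$ -- go perpendicular by $r+1$, across, back by $r+1$ -- producing a sub-sub-segment of length $\geq 3r$ that is not $r$-thin. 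The other direction uses that every $a$-edge is a cut edge of $X$, so the loop closing any sub-sub-segment $\tau$ into a quadrangle must traverse each $a$-edge of $\gamma_1 \supset \tau$; if the midpoint of $\tau$ lies in a sheet-piece of length $\leq 2r$, then an $a$-edge bordering that sheet-piece is within $r$ of $m_\tau$ (and inside $\tau$, since $\ell(\tau) \geq 3r$), forcing the loop to meet the closed $r$-ball around $m_\tau$.

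With this characterization in hand, the rest is an arithmetic estimate. The geodesic $\sigma$ from $x$ to $y$ along $\gamma$ has sheet-pieces drawn from the excursion lengths $5^1, 5^2, 5^3, \ldots$ of $\gamma$. Any $r$-quadrangle-contracting subsegment $\sigma' \subseteq \sigma$ contains only sheet-pieces of length $\leq 2r$; its full excursions therefore have index at most $\log_5(2r)$ and contribute at most $\tfrac{5}{4} \cdot 2r = 2.5r$ by geometric summation, while the (at most two) partial sheet-pieces at the endpoints of $\sigma'$ contribute at most $2 \cdot 2r = 4r$ and there are $O(\log r)$ intervening $a$-edges. Hence $\ell(\sigma') \leq 6.5r + O(\log r)$, which is strictly below $K(r)$ once $r$ is large enough; the rapid growth of the excursions is exactly what makes this estimate survive. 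A separate direct check handles small $r$ and any degenerate edge cases involving individual $a$-edges. This rules out a bad $\sigma'$ of length $\geq K(r)$ and gives $(x, y) \in \mc B_K$.

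I expect the main obstacle is the structural characterization of $r$-quadrangle-contracting subgeodesics in $\Z \ast \Z^2$ -- in particular, carefully writing down the quadrangle witness in $\Z^2$ and using the cut-edge property of $a$-edges to force the loop near the midpoint. Once that is in place, the remaining bookkeeping about the growth rate $5^n$ is routine, and the boundedness of $\tilde{\gamma}$ follows.
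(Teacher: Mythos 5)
Your overall strategy is the same as the paper's: reduce the boundedness of $\tilde{\gamma}$ to showing $(x,y)\in\mc B_K$ for all $x,y\in\gamma$ with $d(x,y)\geq 1$, and do this by bounding the sheet-pieces of any $r$-quadrangle-contracting subsegment and summing the excursion lengths (the paper's own proof is a one-line version of this). However, the step where your argument must close has a genuine gap. You bound $\ell(\sigma')\leq 6.5r+O(\log r)$ and then assert this is $<K(r)$ ``once $r$ is large enough.'' The only hypothesis on a full contraction gauge is $K(r)\geq 4r+1$, so the deficit between your upper bound and $K(r)$ is \emph{linear} in $r$ and does not vanish for large $r$. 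Concretely, take $K(r)=4r+1$ and consider the subsegment $b^{2r}ab^{2r}$ of $\gamma$ straddling the $a$-edge between two blocks of length at least $2r$: every subsegment of it of length $\geq 3r$ has its midpoint within $r/2$ of the cut edge $e$ labelled $a$, and the complementary path of any quadrangle must traverse $e$ since a geodesic crosses a cut edge exactly once; hence $b^{2r}ab^{2r}$ is $r$-quadrangle-contracting of length exactly $K(r)$, so its endpoints are separated and \emph{not} in $\mc B_K$. Your argument therefore cannot work for an arbitrary full gauge; it needs $K(r)$ to dominate your linear bound (e.g.\ the distinguished gauge $K(r)=10r+1$), together with the separate small-$r$ check you already flag.

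There is also a quantitative slip in your structural characterization: $r$-quadrangle-contraction does not force sheet-pieces of length $\leq 2r$. Your own witness (the wide quadrangle in the $\Z^2$-sheet) needs a single-sheet subsegment of length $\geq 3r$ before the thinness condition applies to it, so the correct conclusion is only that sheet-pieces have length $<3r$; indeed $b^{\lceil 5r/2\rceil}ab^{\lceil 5r/2\rceil}$ is $r$-quadrangle-contracting despite its sheet-pieces exceeding $2r$. This pushes your upper bound to roughly $9.75r+O(\log r)$ and makes the comparison with $K(r)$ tighter still. For what it is worth, the paper's proof makes the same leap more tersely --- it asserts that any $r$-quadrangle-contracting subsegment of length $\geq K(r)$ contains a single-sheet piece of length $\geq 3r+1$, which fails for exactly the segments above --- so your more careful bookkeeping is the right instinct; carried out honestly, it shows the observation requires a growth hypothesis on $K$ beyond $K(r)\geq 4r+1$ rather than holding for every full contraction gauge.
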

    
     The former is trivially true. We will prove the latter, by showing that $(x, y)\in \mc B_K$ for all $x, y\in \gamma$ with $d(x, y)\geq 1$. Assume that $\gamma'$ is a subgeodesic of $\gamma$ which is $r$--quadrangle-contracting and at least $K(r)$--long. By the definition of $\gamma$, there exists a subsegment $\gamma''$ of $\gamma$ of length at least $(3r+1)$ and which is completely contained in a single sheet. By the observation above, $\gamma''$ is not $r$--thin, a contradiction to $\gamma'$-being $r$--quadrangle-contracting. Since $\gamma$ is chosen in a way such that between any pair of points $x, y\in \gamma$ there is a unique geodesic, the argument above shows that $(x, y)\in \mc B_K$ for all $x, y\in \gamma$ with $d(x, y)\geq 1$. In other words, the diameter of $\tilde{\gamma}$ is $1$.
\end{ex}

We can generalize the construction of a contraction space even further. Namely, given a contraction gauge $K$ and a constant $r\geq 0$ we can choose a set of geodesics segments $\mc G_K(r)$ such that the following holds: 
\begin{itemize}
    \item If $\gamma\in \mc G_K(r)$, then $\gamma$ is $r$--thin and at least $K(r)$--long.
    \item If $\gamma$ is an at least $K(r)$--long $r$--quadrangle-contracting geodesic, then $\gamma\in \mc G_K(r)$.
\end{itemize}

With this, we can update the definition of anti-contracting, separated and witness as follows. 
\begin{itemize}
    \item (anti-contracting) A pair $(x, y)\in X\times X$ is $K$--anti-contracting if and only if for all all $r\geq 0$ and all geodesics $\gamma$ from $x$ to $y$, no subgeodesic $\gamma'$ of $\gamma$ is contained in $\mc G_K(r)$. We denote the set of $K$--anti-contracting pairs of points by $\mc C_K$.
    \item (separated) A pair of points $(x, y)$ is $(r, K)$--separated if there exists a geodesic $\gamma$ form $x$ to $y$ and a subgeodesic $\gamma'$ of $\gamma$ which is contained in $\mc G_K(r)$.
    \item (witness) A geodesic $\gamma$ from $x$ to $y$ is a witness of a $(K, r)$-separation of the pair $(x, y)$ if there exists a subgeodesic $\gamma'\subset \gamma$ which is contained in $\mc G_K(r)$.
\end{itemize}
We call the $K$-contraction space defined using these definitions the \emph{$(\mc G_K, K)$--contraction space}. As in the case of the alternative contraction space, the results in Section \ref{sec:construction} hold and the proofs are analogous. The results in Section \ref{sec:acylindricity} and Section \ref{sec:diameter} hold under the assumption that $\mc G_K(r)$ is $G$--invariant for all $r$, where $G$ is the group featuring in the statements. Again, the proofs work analogously.


\section{Group actions on the contraction space}\label{sec:acylindricity}
\textbf{Notation:} In this section, $G$ denotes a group acting by isometries on a geodesic metric space $(X, d)$.

Since the set of $K$--anti-contracting pairs of points is translation-invariant, the action $G\acts X$ naturally induces an action of $G \acts \hat{X}_K$. In this section we study the connection between the two actions and properties of $G\acts \hat{X}_K$. In particular we prove generalized versions of the Theorems \ref{thmintro:non-unifromv2}, \ref{thmintro:non-uniform-acylindricity}, \ref{thmintro:wpd-lox-contr-equivalence}, \ref{intro:stable} and \ref{intro:quasi-geodesics}.

\subsection{Acylindricity and WPD elements}
We start by studying WPD elements of $G \acts \hat{X}_K$ and then prove generalized versions of the Theorems \ref{thmintro:non-unifromv2}, \ref{thmintro:non-uniform-acylindricity} and \ref{thmintro:wpd-lox-contr-equivalence}. Most results follow directly from the construction of the $K$-contraction space. However, to prove Theorem \ref{thm:non-uniform-action-full} (which is a generalized version of Theorem \ref{thmintro:non-uniform-acylindricity}) we further need to show that closest points projections in the $K$--contraction space $\hat{X}_K$ are related to closest point projections in $X$ (Lemma \ref{lemma:closest_point_projections}).

\begin{prop}\label{prop:loxodromic_implies_wpd}
    Let $G$ be a group acting properly on $X$ and let $g\in G$ be an element such that $g\acts \hat{X}_K$ is loxodromic. Then $g\acts \hat{X}_K$ is WPD.
\end{prop}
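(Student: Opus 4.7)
The plan is to use loxodromicity of $g$ on $\hat{X}_K$ to locate a strongly-contracting quasi-axis $\alpha$ for $g$ in $X$, translate the WPD hypothesis (stated in $\hat{d}$) into a fellow-traveling statement in the original metric $d$ via the quadrangle estimate (Lemma \ref{lemma:quadrangle-estimates}, which the outline promises is established earlier in this section), and then invoke properness of $G\acts X$ to close out.

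First I would extract the quasi-axis. Since $g\acts \hat{X}_K$ is loxodromic, the orbit $\{g^n y\}$ of any $y\in X$ parametrizes a quasi-geodesic in $\hat{X}_K$; joining consecutive orbit points by $X$-geodesics yields a $g$-invariant bi-infinite piecewise-geodesic path $\alpha\subset X$ whose image $\hat{\alpha}$ is still quasi-geodesic in $\hat{X}_K$. Tracing $\hat{\alpha}$ by an $\hat{X}_K$-geodesic and using Remark \ref{rem:on_geodesic} to lift back to a geodesic $\gamma\subset X$ whose image is quasi-geodesic in $\hat{X}_K$, Lemma \ref{lemma:quasi-geodesic_image_implies_strongly_contracting} provides a constant $C$ making $\gamma$ a $C$-contracting geodesic; Lemma \ref{lemma:hausdorff_contraction} then transfers strong contraction back to $\alpha$ itself, with some constant $C'$. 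After moving the basepoint $x\in\hat{X}_K$ by a bounded amount (absorbable into $R$), I assume $x\in\alpha$.

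Fix $R>0$ and, using that $\hat{\alpha}$ is quasi-geodesic (Lemma \ref{lemma:contracting_implies_quasi_geodesic}), pick $N$ with $\hat{d}(x, g^N x)$ exceeding the threshold required by Lemma \ref{lemma:quadrangle-estimates}. For any $h$ in the WPD-check set $S_N=\{h\in G : \hat{d}(x, hx)<R,\ \hat{d}(g^N x, hg^N x)<R\}$, I consider the quadrangle with vertices $x, g^N x, hg^N x, hx$: two opposite sides have $\hat{d}$-length less than $R$, while the other two opposite sides are subarcs of the $C'$-contracting quasi-geodesics $\alpha$ and $h\alpha$. Lemma \ref{lemma:quadrangle-estimates} then forces these subarcs to come within a uniform $D=D(C',R)$ in the metric $d$ on a long portion.

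Finally, because $d$ and $\hat{d}$ restricted to $\alpha$ are quasi-isometric (combining Lemmas \ref{lemma:contracting_implies_quasi_geodesic} and \ref{lemma:quasi-geodesic_image_implies_strongly_contracting} through $\gamma$), the bound $\hat{d}(x, hx)<R$ controls how far $h$ shifts $x$ along $\alpha$ in the $d$-metric. Only boundedly many shift amounts are then possible for $h\in S_N$, and for each amount, $h$ sends a fixed compact segment of $\alpha$ into the $D$-neighborhood of another fixed compact segment; properness of $G\acts X$ thus forces $|S_N|<\infty$, establishing WPD. The main obstacle I anticipate is upgrading the pointwise quadrangle conclusion to a uniform axis-alignment of $h\alpha$ with $\alpha$ (controlling both endpoints simultaneously rather than just a midpoint) while keeping careful track of the bounded slippage between $\alpha$ and the auxiliary geodesic $\gamma$.
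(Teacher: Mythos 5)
Your overall strategy (produce a long quadrangle with two short opposite sides, force the translated geodesic to pass near a fixed point of $[x,g^Nx]$, finish by properness) is the right shape, and it is how the paper concludes as well. But there are concrete gaps. First, your argument leans on Lemma \ref{lemma:quadrangle-estimates}, which is only available for \emph{full} gauges with $K(r)\geq 10r+1$; the proposition is stated (and proved in the paper) for an arbitrary contraction gauge $K$. The paper avoids this by a pigeonhole argument along a $d$-geodesic $[x,g^Nx]$: since $d(x,g^Nx)\leq N\tau$ while $\hat d$ grows linearly, some pair $x_{i_0},x_{i_0+1}$ in the middle third satisfies $\hat d(x_{i_0},x_{i_0+1})=2$ but $d(x_{i_0},x_{i_0+1})\leq 6\tau/c$; this pair is then $r$-separated with $K(r)\leq 6\tau/c$, and its pinch point $m$ plays the role you want Lemma \ref{lemma:quadrangle-estimates} to play, with no hypotheses on $K$. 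Second, your axis extraction is not free: knowing that $\hat\alpha$ and the image of an auxiliary $X$-geodesic are close in $\hat d$ tells you nothing about their distance in $d$, so you cannot invoke Lemma \ref{lemma:hausdorff_contraction} to transfer strong contraction back to $\alpha$, nor run Arzel\`a--Ascoli on geodesics that you cannot confine to a $d$-bounded region. This transfer is precisely the content of the later Proposition \ref{lemma:loxodromic_implies_strongly_contracting}, whose proof itself requires Lemma \ref{lemma:quadrangle-estimates} and the stronger hypotheses on $K$; it cannot be assumed here.

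Third, the closing step contains a false claim: $\hat d(x,hx)<R$ does \emph{not} control $d(x,hx)$ or ``how far $h$ shifts $x$ along $\alpha$'' --- the point $hx$ need not lie anywhere near $\alpha$, and collapsing $d$-distances to small $\hat d$-distances is exactly what the construction does. Fortunately this detour is unnecessary: once you know a fixed point $m\in[x,g^Nx]$ satisfies $d(m,h\cdot[x,g^Nx])\leq r$ for every $h$ in the WPD set, properness of $G\acts X$ applied to the compact segment $[x,g^Nx]$ already gives finiteness. In short: drop the axis, work directly with a witness/pinch point on $[x,g^Nx]$ obtained by pigeonhole, and rule out the two short sides of the quadrangle using Remark \ref{rem:on_geodesic} as the paper does.
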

\begin{proof}
    Let $g\in G$ be an element which acts loxodromically on $\hat{X}_K$. Since $X$ is $1$-dense in $\hat{X}_K$, it is enough to show that for every $x\in X$ and $R> 0$ there exists an integer $N\in \N$ such that
    \begin{align} \label{eq:lox_to_wpd}
        \abs{\{h\in G| d(x, hx)< R, d(g^Nx, hg^Nx) < R\}} < \infty.
    \end{align}

    Let $R>0$, let $x\in X$ and let $\tau = d(x, gx)$. Since $g\acts \hat{X}_K$ is loxodromic, there exists $c>0$ such that $\hat{d}(x, g^nx)\geq nc$ for all $n\in \N$. At the same time, $d(x, g^nx)\leq n\tau$. We will show that \eqref{eq:lox_to_wpd} holds for $N = (6\tau/c + 6 \delta_0 + 3R + 6)/c$, where we recall that $\delta_0$ is the hyperbolicity constant as defined in Remark \ref{rem:delta_0}.
    
    Let $M= \hat{d}(x, g^Nx)$. We have that $M\geq cN$ and hence $M\geq 6\tau/c + 6 \delta_0 + 3R + 6$. Further $d(x, g^Nx)\leq N\tau\leq M \tau/c$. Define $x_0 = x$. Further, for all $1\leq i \leq M/2$ choose a point $x_i$ on $[x, g^Nx]$ with $\hat{d}(x, x_i) = 2i$ and with $d(x, x_i)\geq d(x, x_{i-1})$. Observe that with this definition, $d(x, g^Nx)\geq \sum_{i=1}^{\floor{M/2}} d(x_{i-1}, x_{i})$.
    There exists an index $i_0$ with $M/6 -1 \leq i_0\leq M/3$ satisfying $d(x_{i_0}, x_{{i_0}+1})\leq 6\tau/c$. Indeed, otherwise $d(x, g^Nx)> (M/6) \cdot (6\tau/c)\geq M\tau/c$, a contradiction. Let $\gamma'$ be an $r$-witness of the separation of $x_{i_0}$ and $x_{i_0 +1}$ with pinch point $m$. Since $d(x_{i_0}, x_{{i_0}+1})\leq 6\tau/c$, we have that $r\leq 2\tau/c$.
    
    Let $\gamma$ be a geodesic from $x$ to $g^Nx$ of which $\gamma'$ is a subgeodesic. Let $h\in G$ be an element with $\hat{d}(x, hx)< R$ and $\hat{d}(g^Nx, hg^Nx)< R$. Consider the quadrangle $\mc Q = (\gamma, [g^Nx, hg^Nx], h\cdot [g^Nx, x], [hx, x])$. Since $m$ is a pinch point, there exists a point $y\in [g^Nx, hg^Nx]\cup  h\cdot [g^Nx, x]\cup [hx, x]$ with $d(y, m)\leq r \leq 2\tau/c$. We have that 
    \begin{align*}
        d(m, [hx, x])\geq \hat{d}(m, [hx, x])&\geq \hat{d}(m, x) - R - \delta_0 \geq \hat{d}(x_{i_0}, x) - R - 2\delta_0\\
        & \geq M/3 - R - 2\delta_0 - 2. 
    \end{align*}
    We use Remark \ref{rem:on_geodesic} for the second and third step. Since  $M > 2 \tau / c + 3R + 6 \delta_0 + 6$, the above calculation shows that $y\not\in [hx, x]$. One can show analogously that $y\not\in  [g^Nx, hg^Nx]$. Therefore, $d(m, h\cdot [g^Nx, x])\leq r$. Since the action of $G$ on $X$ is proper, only finitely many elements $h\in G$ satisfy this, which concludes the proof.
\end{proof}

\begin{prop}\label{prop:wpd_implies_strongly_contracting}
    Let $G$ be a finitely generated group acting geometrically on $X$. Let $g\in G$ be an element where $g\acts\hat{X}_K$ is WPD. Then $g\acts X$ is strongly-contracting.
\end{prop}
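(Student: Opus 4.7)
The plan is to produce a bi-infinite strongly-contracting quasi-geodesic in $X$ preserved by $g$. Since $g$ is WPD it is in particular loxodromic on $\hat{X}_K$, so $\hat d(x,g^n x)\geq cn-c$ for some $c>0$. Combined with the geometric action of $G\acts X$, the element $g$ has positive translation length in $X$, and the bi-infinite path $\lambda:\R\to X$ obtained by concatenating the $X$-geodesics $[g^n x,g^{n+1}x]$ is a bi-infinite quasi-geodesic in $X$ on which $g$ acts by translation. The target is to show $\lambda$ is strongly contracting. Note that $\hat\lambda$ is itself a quasi-geodesic in $\hat{X}_K$: the upper bound $\hat d\leq d$ is automatic, while for the lower bound the linear growth $\hat d(g^m x,g^n x)\geq c|n-m|-c$ on orbit points combines with the diameter bound $\widehat{\diam}(\widehat{[g^n x,g^{n+1}x]})\leq \tau+2\delta_0$ from Remark~\ref{rem:on_geodesic} (with $\tau=d(x,gx)$) to give a linear lower bound for all pairs of points on $\lambda$.

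Next, consider the geodesics $\gamma_n=[g^{-n}x,g^n x]$ in $X$. By Remark~\ref{rem:on_geodesic}, $\hat\gamma_n$ is at $\hat d$-Hausdorff distance at most $\delta_0$ from $[g^{-n}x,g^n x]_K$, and by stability of quasi-geodesics in the $\delta_0$-hyperbolic space $\hat{X}_K$, the quasi-geodesic $\hat\lambda|_{[-n\tau,n\tau]}$ sits at uniformly bounded $\hat d$-Hausdorff distance from $[g^{-n}x,g^n x]_K$. Hence $\hat\gamma_n$ and $\hat\lambda|_{[-n\tau,n\tau]}$ fellow-travel in $\hat d$-Hausdorff distance with constant $D$ independent of $n$. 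I would then apply the quadrangle estimate (Lemma~\ref{lemma:quadrangle-estimates}) to quadrangles formed by a sub-geodesic of $\gamma_n$, a sub-segment of $\lambda$, and two short-in-$\hat d$ connecting sides, in order to upgrade $\hat d$-closeness to $d$-closeness of matching subsegments of $\gamma_n$ and $\lambda$. Using that $\hat\lambda$ is a $\hat{X}_K$-quasi-geodesic and that the $d$-arclength parameterizations of $\gamma_n$ and $\lambda$ are now comparable on long subsegments, one obtains a linear lower bound $\hat d(\hat\gamma_n(s),\hat\gamma_n(t))\geq |s-t|/Q-Q$ under the $d$-arclength parameterization of $\gamma_n$, so $\hat\gamma_n$ is a $Q$-quasi-geodesic in $\hat{X}_K$ with $Q$ independent of $n$.

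Applying Lemma~\ref{lemma:quasi-geodesic_image_implies_strongly_contracting} then gives that each $\gamma_n$ is $C$-contracting with $C$ independent of $n$. Since $\gamma_n$ is thus uniformly Morse (Lemma~\ref{lem:theorem1.4}) and the endpoints $g^{\pm n}x$ of the quasi-geodesic segment $\lambda|_{[-n\tau,n\tau]}$ lie on $\gamma_n$, Lemma~\ref{lemma:hausdorff_contraction}(2) yields that $\lambda|_{[-n\tau,n\tau]}$ is $\Phi(C)$-contracting uniformly in $n$. Letting $n\to\infty$, the bi-infinite $g$-invariant quasi-geodesic $\lambda$ is $\Phi(C)$-contracting, so $g\acts X$ is strongly-contracting as required.

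The main obstacle is the uniform quasi-geodesic constant for $\hat\gamma_n$: passing from $\hat d$-Hausdorff closeness of $\hat\gamma_n$ with $\hat\lambda|_{[-n\tau,n\tau]}$ to the desired linear lower bound under the $d$-arclength parameterization. The two paths live in different metrics, and only the quadrangle-estimate lemma together with the closest-point-projection control of Theorem~\ref{thmintro:structure-results}(2) convert the $\hat d$-control into enough $d$-control to align the parameterizations; this is where the WPD hypothesis enters through its consequence that $g$ is loxodromic on $\hat{X}_K$ with well-spaced orbit.
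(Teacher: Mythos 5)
Your argument is not the paper's: the paper's proof is two lines long and hinges on an input you never use, namely that a WPD element is a Morse element of $G$ (\cite[Theorem~1]{S:hypemb}). That immediately produces an honest geodesic line $\gamma$ in $X$ at bounded $d$-Hausdorff distance from $\{g^ix\}$; loxodromicity of $g$ on $\hat{X}_K$ then forces $\hat\gamma$ to be a quasi-geodesic, and Lemma~\ref{lemma:quasi-geodesic_image_implies_strongly_contracting} finishes. Crucially, that route needs nothing from $K$ beyond it being a contraction gauge. Your route instead runs through Lemma~\ref{lemma:quadrangle-estimates} and the closest-point-projection control of Lemma~\ref{lemma:closest_point_projections}, both of which are only available when $K$ is full and satisfies $K(r)\geq 10r+1$. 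Since Proposition~\ref{prop:wpd_implies_strongly_contracting} is stated for an arbitrary contraction gauge (and is later invoked for arbitrary full gauges in Theorem~\ref{theorem:wpd_equivalence}), your proof as written does not cover the stated generality: this is a genuine gap, not a cosmetic one, because for partial gauges or gauges with $4r+1\leq K(r)<10r+1$ none of your quadrangle machinery is proved.

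Within the restricted setting $K(r)\geq 10r+1$, what you are doing is essentially reproving the paper's stronger Proposition~\ref{lemma:loxodromic_implies_strongly_contracting} (loxodromic on $\hat X_K$ already implies strongly contracting, with no WPD and no geometric action needed), and the approach is viable --- but the step you yourself flag as ``the main obstacle'' is exactly where the sketch is too loose. Lemma~\ref{lemma:quadrangle-estimates} only produces closeness at a single point, within distance $r_0$ of a \emph{middle separator} of the pair playing the role of $(x,y)$, and $r_0$ is only controlled by the $X$-length of $[x,y]$. If you feed in subsegments of $\gamma_n$ as the base side, their $X$-length is precisely what you have not yet bounded, so $r_0$ is not uniform. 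The fix (and what the paper does) is to apply the lemma with the translates $g^{ik}[x,g^kx]$ of one fixed orbit geodesic as the base side, so that a single middle separator $r_0$ works for every $i$ by equivariance, and the marked points $z_i$ it produces on $\gamma_n$ are boundedly spaced in $d$ and linearly separated in $\hat d$. With that correction (and an exhaustion or Arzel\`a--Ascoli argument at the end) your proof goes through, but only for the restricted gauges; for the proposition as stated you should instead use the Morse property of WPD elements.
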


\begin{proof}
    Let $x\in X$. Since $g\acts \hat{X}_K$ is WPD, $g\acts X$ is Morse (see \cite[Theorem 1]{S:hypemb}). In particular, $\{g^ix\}_{i\in \Z}$ is at bounded Hausdorff distance (with respect to $d$) from a Morse geodesic line $\gamma : \R\to X$. Say $d_{Haus}(\{g^ix\}_{i\in \Z}, \gamma)\leq D$. Further, $g\acts \hat{X}_K$ is loxodromic and hence there exists $c>0$ such that $\hat{d}(x, g^nx)\geq cn$ for all $n\in \N$. 
    
    We next show that $\hat{\gamma}$ is a quasi-geodesic. Let $\tau = d(x, gx)$ and let $s, t\in \R$. There exists $i, j$ such that $d(\gamma(s), g^ix)\leq D$ and $d(\gamma(t), g^jx)\leq D$. Since, $d(g^ix, g^jx)\geq \abs{s - t} - 2D$, we have that $\abs{i-j} \geq \frac{\abs{s - t} - 2D}{\tau}$. Consequently, $\hat{d}(\hat{\gamma}(t), \hat{\gamma}(s))\geq \abs{i - j}c -2D \geq \frac{c\abs{s-t}}{\tau} - \frac{2Dc}{\tau}-2D$, implying that $\hat{\gamma}$ is indeed a quasi-geodesic. Lemma \ref{lemma:quasi-geodesic_image_implies_strongly_contracting} implies that $\gamma$ is strongly-contracting, which concludes the proof.
\end{proof}

\begin{prop}\label{prop:strongyl_contracting_implies_wpd}
    Let $g\in G$ be a element for which $g\acts X$ is $C$-contracting. If $K(\Phi^4(C)) < \infty$, then $g\acts \hat{X}_K$ is loxodromic.
\end{prop}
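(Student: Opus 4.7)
The plan is to reduce the claim to Lemma \ref{lemma:contracting_implies_quasi_geodesic}, which already does the main work of turning a contracting geodesic in $X$ into a quasi-geodesic in $\hat{X}_K$. What we need to arrange is that the orbit $\{g^i x\}$ can be tracked by such a quasi-geodesic in a way that forces a linear lower bound on $\hat{d}(x, g^n x)$.

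First, since $g\acts X$ is $C$-contracting, by definition some orbit of $g$ (or rather a $g$-invariant bi-infinite $C$-contracting $C$-quasi-geodesic) lies at bounded Hausdorff distance from $\{g^i x\}$. Apply Lemma \ref{lemm:nearby-geodesic} to obtain a $\Phi(C)$-contracting geodesic line $\gamma:\R\to X$ and a constant $D=\Phi(C)$ such that the orbit $\{g^ix\}_{i\in\Z}$ lies in the closed $D$-neighbourhood of $\gamma$ (with respect to $d$). Choose points $p_i\in\gamma$ with $d(g^ix,p_i)\leq D$; since $g$ acts by isometries and $d(x,gx)=\tau$ for some $\tau\geq 0$, the points $p_i$ satisfy $|d(p_i,p_j)-|i-j|\tau|\leq 2D$, and in particular they are separated linearly in $i$ along $\gamma$.

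Next, apply Lemma \ref{lemma:contracting_implies_quasi_geodesic} to $\gamma$. Since $\gamma$ is $\Phi(C)$-contracting, the hypothesis there reads $K(\Phi(\Phi(C))^3)<\infty$; this is implied by $K(\Phi^4(C))<\infty$ because $\Phi$ is non-decreasing (if one prefers, monotonicity lets us replace the polynomial exponent in the input of $K$ by any larger iterate). Hence $\hat{\gamma}:\R\to\hat{X}_K$ is a $Q$-quasi-geodesic for some constant $Q=Q(C,K)$. In particular
\[
\hat{d}(p_i,p_j)\ \geq\ \frac{d_\gamma(p_i,p_j)}{Q}-Q\ =\ \frac{d(p_i,p_j)}{Q}-Q\ \geq\ \frac{|i-j|\tau-2D}{Q}-Q,
\]
where $d_\gamma$ denotes arc length along $\gamma$.

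Finally, since $\hat{d}\leq d$ on $X$, we have $\hat{d}(g^ix,p_i)\leq D$ for every $i$, so the triangle inequality gives
\[
\hat{d}(g^ix,g^jx)\ \geq\ \hat{d}(p_i,p_j)-2D\ \geq\ \frac{|i-j|\tau}{Q}-\left(Q+\frac{2D}{Q}+2D\right).
\]
Combined with the obvious upper bound $\hat{d}(g^ix,g^jx)\leq d(g^ix,g^jx)\leq |i-j|\tau$, this shows that the orbit map $\Z\to\hat{X}_K$, $n\mapsto g^n x$, is a quasi-isometric embedding; hence $g\acts\hat{X}_K$ is loxodromic. There is no genuine obstacle here: the only subtle point is matching the constant $\Phi^4(C)$ in the hypothesis with the constant $\Phi(\Phi(C))^3$ (or $\Phi^3(\Phi(C))$) that Lemma \ref{lemma:contracting_implies_quasi_geodesic} requires, which is handled by monotonicity of $K$ and $\Phi$.
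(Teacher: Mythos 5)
Your proof is correct and follows essentially the same route as the paper's: replace the orbit by a nearby $\Phi(C)$-contracting geodesic line $\gamma$, apply Lemma \ref{lemma:contracting_implies_quasi_geodesic} to $\gamma$ (whose hypothesis for a $\Phi(C)$-contracting geodesic is exactly $K(\Phi^3(\Phi(C)))=K(\Phi^4(C))<\infty$, so no monotonicity argument is even needed), and transfer the linear lower bound back to the orbit by the triangle inequality. The only small imprecision is your claim that $\abs{d(p_i,p_j)-\abs{i-j}\tau}\leq 2D$ follows from $g$ acting by isometries: the lower bound actually uses that $i\mapsto g^ix$ is a quasi-isometric embedding of $\Z$ into $X$, which holds because a $C$-contracting axis is by definition loxodromic in $X$ --- this is exactly how the paper closes its version of the argument.
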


\begin{proof}
    Since $g\acts X$ is strongly-contracting, there exists $x\in X$ and a $\Phi(C)$-contracting geodesic $\gamma$ such that $\{g^ix\}_{i\in \Z}$ is in the $\Phi(C)$--neighbourhood of $\gamma$. Further the map $f : \Z \to X$ defined by $i\mapsto g^ix$ is a quasi-isometric embedding. We have to show that the map $\hat{f} : \Z \to \hat{X}_K$ defined by $i\mapsto g^ix$ is also a quasi-isometric embedding. 
    By Lemma \ref{lemma:contracting_implies_quasi_geodesic}, $\hat{\gamma}$ is a $Q$-quasi-geodesic for some $Q$.
    Let $i, j\in \Z$. There exist $p, q$ on $\gamma$ with $d(p, g^ix)\leq \Phi(C)$ and $d(q, g^jx)\leq \Phi(C)$. We have that 
    \begin{align*}
        \hat{d}(g^ix, g^jx)\geq \hat{d}(p, q)  - 2\Phi(C)\geq \frac{d(p,q)}{Q} - Q -2\Phi(C) \geq \frac{d(g^ix, g^jx) - 2\Phi(C)}{Q} - Q - 2\Phi(C).
    \end{align*}
    Since $f$ is a quasi-isometric embedding, the above inequalities imply that $\hat{d}(g^ix, g^jx)\geq \frac{\abs{ i - j }}{Q'} - Q'$ for some $Q'\geq 1$ and hence $\hat{f}$ is indeed a quasi-isometric embedding.
\end{proof}

\begin{thm}[Generalization of Theorem \ref{thmintro:wpd-lox-contr-equivalence}]\label{theorem:wpd_equivalence} Let $K$ be a full contraction gauge and let $G$ be a finitely generated group acting geometrically on $X$. Let $g\in G$ be an element. The following are equivalent;
\begin{enumerate}
    \item $g$ is loxodromic (w.r.t the action on $\hat{X}_K$),
    \item $g$ is WPD (w.r.t the action on $\hat{X}_K$),
    \item $g$ is strongly-contracting (w.r.t the action on $X$).
\end{enumerate}
\end{thm}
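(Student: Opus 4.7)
The plan is to assemble the equivalence by chaining together the three propositions immediately preceding the theorem in the excerpt, since each handles one of the required implications. A geometric action is in particular a proper action, and the hypothesis that $K$ is full means $K(r) < \infty$ for every $r \in \R_{\geq 0}$, so the side conditions of those propositions will all be automatically satisfied.

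First I would dispatch the easy implication $(2) \Rightarrow (1)$: WPD elements are loxodromic by definition. Next, for $(1) \Rightarrow (2)$, I invoke Proposition \ref{prop:loxodromic_implies_wpd} directly — since $G$ acts geometrically on $X$ it acts properly, so any $g \in G$ which is loxodromic on $\hat{X}_K$ is WPD on $\hat{X}_K$. For $(2) \Rightarrow (3)$, I apply Proposition \ref{prop:wpd_implies_strongly_contracting} verbatim, using that $G$ is finitely generated and acts geometrically on $X$, concluding that $g \acts X$ is strongly contracting.

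For the remaining implication $(3) \Rightarrow (1)$, I appeal to Proposition \ref{prop:strongyl_contracting_implies_wpd}. Its hypothesis requires $K(\Phi^4(C)) < \infty$, where $g \acts X$ is $C$-contracting; but since $K$ is a \emph{full} contraction gauge, $\infty$ is not in the image of $K$, so this condition holds for every $C \geq 0$. Hence $g \acts \hat{X}_K$ is loxodromic. This closes the cycle $(1) \Rightarrow (2) \Rightarrow (3) \Rightarrow (1)$, establishing the equivalence.

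There is no real obstacle here beyond bookkeeping: the work has already been done in Propositions \ref{prop:loxodromic_implies_wpd}, \ref{prop:wpd_implies_strongly_contracting}, and \ref{prop:strongyl_contracting_implies_wpd}, and the role of the theorem is to package them under the single clean hypothesis that $K$ is full and $G \acts X$ is geometric. The one point I would make explicit is why fullness of $K$ is the right hypothesis for the reverse direction — namely, it ensures the finiteness of $K(\Phi^4(C))$ needed to realize the strongly contracting axis as a quasi-geodesic in $\hat{X}_K$ via Lemma \ref{lemma:contracting_implies_quasi_geodesic}.
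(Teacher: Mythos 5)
Your proposal is correct and matches the paper's proof exactly: the paper also establishes the cycle $(1)\Rightarrow(2)\Rightarrow(3)\Rightarrow(1)$ by citing Propositions \ref{prop:loxodromic_implies_wpd}, \ref{prop:wpd_implies_strongly_contracting} and \ref{prop:strongyl_contracting_implies_wpd} in turn, with fullness of $K$ guaranteeing $K(\Phi^4(C))<\infty$ in the last step. Your explicit remarks on why the side conditions are satisfied are a welcome bit of extra bookkeeping but nothing more.
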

\begin{proof}
    Proposition \ref{prop:loxodromic_implies_wpd} states that $(1)\implies (2)$. Further, $(2)\implies (3)$ follows from Proposition \ref{prop:wpd_implies_strongly_contracting}. Lastly, $(3)\implies (1)$ follows from Proposition \ref{prop:strongyl_contracting_implies_wpd}.
\end{proof}

The following Theorem is a generalized version of the second part of Theorem \ref{thmintro:non-unifromv2}.

\begin{thm}\label{theorem:uniform_action}
    Let $K$ be a full contraction gauge and let $G$ be a finitely generated group acting geometrically on a weakly Morse-dichotomous space $X$. Then the action of $G$ on $\hat{X}_K$ is universally WPD.
\end{thm}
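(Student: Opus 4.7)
The plan is to chain together the characterization of universal WPD with the equivalence already established in Theorem \ref{theorem:wpd_equivalence}. Recall that an action is universally WPD if every generalized loxodromic element acts as a WPD element. So fix a generalized loxodromic element $g\in G$; the goal is to show that $g$ acts WPD on $\hat{X}_K$.

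By Sisto's theorem (cited in the paragraph following Definition \ref{def:wpd}), every generalized loxodromic element is Morse as an element of $G$. Since $G$ acts geometrically on $X$, Morseness passes to the orbit: for any basepoint $x\in X$, the orbit map $\mathbb{Z}\to X,\ n\mapsto g^n x$ is a Morse quasi-geodesic in $X$. By Lemma \ref{lemma:morse-traced-by-geodesic}, this orbit lies at bounded Hausdorff distance from a bi-infinite Morse geodesic line $\gamma$ in $X$. Now I would invoke the assumption that $X$ is weakly Morse-dichotomous (Definition \ref{def:morse-dich}): every Morse geodesic in $X$ is strongly contracting, so $\gamma$ is $C$-contracting for some $C\geq 0$. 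Applying Lemma \ref{lemma:hausdorff_contraction} to the orbit quasi-geodesic (which is at bounded Hausdorff distance from $\gamma$) shows that the orbit itself is strongly contracting as a quasi-geodesic, so $g\acts X$ is strongly contracting in the sense of Definition \ref{def:anti-contracting} preceding Proposition \ref{prop:strongyl_contracting_implies_wpd}.

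Finally, since $K$ is full, Theorem \ref{theorem:wpd_equivalence} applies: being strongly contracting with respect to $X$ implies being WPD with respect to $\hat{X}_K$. Thus $g$ acts as a WPD element on $\hat{X}_K$, completing the verification that $G\acts\hat{X}_K$ is universally WPD.

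The argument is essentially a bookkeeping exercise: all the hard work is encapsulated in Theorem \ref{theorem:wpd_equivalence} (in particular the implications $(3)\Rightarrow(1)\Rightarrow(2)$), Sisto's Morseness result for generalized loxodromics, and the weak Morse-dichotomy hypothesis. The only subtle point is the transition from ``Morse element of $G$'' to ``strongly contracting axis in $X$'': one needs the orbit-to-geodesic comparison (Lemma \ref{lemma:morse-traced-by-geodesic}) together with Lemma \ref{lemma:hausdorff_contraction} to ensure that strong contraction of the nearby geodesic transfers back to the orbit quasi-geodesic, so that the hypothesis of Theorem \ref{theorem:wpd_equivalence}(3) is literally satisfied by $g$.
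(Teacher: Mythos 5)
Your proof is correct and follows essentially the same route as the paper: generalized loxodromic $\Rightarrow$ Morse (by Sisto) $\Rightarrow$ strongly contracting on $X$ (by weak Morse-dichotomy) $\Rightarrow$ WPD on $\hat{X}_K$ (by Theorem \ref{theorem:wpd_equivalence}). The extra detail you supply on transferring strong contraction from the nearby Morse geodesic back to the orbit via Lemma \ref{lemma:hausdorff_contraction} is a reasonable elaboration of a step the paper leaves implicit.
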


\begin{proof}
If $g\in G$ is a generalised loxodromic, then $g$ is Morse (see \cite[Theorem 1]{S:hypemb}) with respect to the action on $X$. Since $X$ is weakly Morse-dichotomous, $g$ is strongly-contracting with respect to the action on $X$. By Theorem \ref{theorem:wpd_equivalence}, $g$ is WPD with respect the action on $\hat{X}_K$.
\end{proof}

We show that if $G$ acts properly on $X$ the action on the $K$-contraction space $\hat{X}_K$ is non-uniformly acylindrical under the assumptions that either $K$ is partial or $K(r) \geq 10r+1$ for all $r\geq 0$. We start with proving that the action is non-uniformly acylindrical if $K$ is partial. 

\begin{thm}[Non-uniform acylindricity]
    Let $K$ be a partial contraction gauge and let $G$ be a group acting properly on $X$. Then the action of $G$ on $\hat{X}_K$ is non-uniformly acylindrical.
\end{thm}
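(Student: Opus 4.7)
The plan is to adapt the proof of Proposition~\ref{prop:loxodromic_implies_wpd}, replacing the loxodromic orbit $\{x, g^Nx\}$ by an arbitrary pair $x, y \in X$ of large $\hat{d}$-distance. Since $K$ is partial, fix $r_0 \geq 0$ with $K(r_0) = \infty$; by monotonicity of $K$, every $(r, K)$-separated pair satisfies $r < r_0$, irrespective of the $d$-distance between its members. This uniform upper bound on separation levels is the new ingredient: it replaces the bound $r \leq 2\tau/c$ derived in the loxodromic proof from the translation length $\tau$ and quasi-isometric constant $c$.

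Given $R > 0$, set $D$ to be a sufficiently large linear function of $R$, $\delta_0$, and $r_0$ (for example $D = 2R + 4\delta_0 + 2r_0 + 14$). Fix $x, y \in \hat{X}_K$ with $\hat{d}(x, y) \geq D$; using that $X$ is $1$-dense in $\hat{X}_K$ we may assume $x, y \in X$ (absorbing constants into $D$). Set $M := \hat{d}(x, y)$. The continuity of $t \mapsto \hat{d}(x, [x, y](t))$ on $[0, d(x, y)]$ with endpoint values $0$ and $M$ lets us choose, exactly as in Proposition~\ref{prop:loxodromic_implies_wpd}, points $x_0 = x, x_1, \ldots, x_{\lfloor M/2 \rfloor}$ on $[x, y]$ with $\hat{d}(x, x_i) = 2i$ and $d(x, x_{i-1}) \leq d(x, x_i)$. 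By the choice of $D$, there is an integer $i_0$ with
\begin{align*}
    R + 2 + 2\delta_0 + r_0 < 2 i_0 < M - R - 4 - 2\delta_0 - r_0.
\end{align*}

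Since $\hat{d}(x_{i_0}, x_{i_0+1}) = 2 > 1$, the pair $(x_{i_0}, x_{i_0+1})$ is $(r, K)$-separated for some $r < r_0$. Let $\gamma'$ be an $r$-witness with $r$-thin subsegment $\gamma''$ and pinch point $m := m_{\gamma''}$; concatenating geodesics, $\gamma := [x, x_{i_0}] \circ \gamma' \circ [x_{i_0+1}, y]$ is a geodesic from $x$ to $y$ containing $\gamma''$. For any $h \in G$ with $\hat{d}(x, hx) < R$ and $\hat{d}(y, hy) < R$, the $r$-thinness of $\gamma''$ applied to the quadrangle $(\gamma, [y, hy], h \cdot [y, x], [hx, x])$ produces a point $p$ on the union of the latter three sides with $d(m, p) \leq r < r_0$. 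Two applications of Remark~\ref{rem:on_geodesic}---first to $\gamma'$ to bound $\hat{d}(x_{i_0}, m) \leq 2 + \delta_0$, then to $[hx, x]$ to bound $\hat{d}(x, q) \leq R + \delta_0$ for $q \in [hx, x]$---yield $\hat{d}(m, [hx, x]) \geq 2i_0 - R - 2 - 2\delta_0 > r_0$, and the symmetric computation gives $\hat{d}(m, [y, hy]) > r_0$. Since $d \geq \hat{d}$, these exclude $p$ from $[hx, x] \cup [y, hy]$, forcing $p \in h \cdot [y, x]$. Writing $p = hq$ with $q \in [y, x]$, we conclude $h^{-1} m \in \mc N_{r_0}([x, y])$, a $d$-bounded subset of $X$; properness of $G \acts X$ then leaves only finitely many such $h$.

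The main point at which this argument differs from Proposition~\ref{prop:loxodromic_implies_wpd} is in obtaining a uniform bound on the separation level $r$. In the loxodromic setting, a pigeonhole argument comparing $d$- and $\hat{d}$-lengths along the orbit provides such a bound; for an arbitrary pair $(x, y)$ no analogous comparison is available, but partiality of $K$ directly forces $r < r_0$, which is precisely what lets the remainder of the argument go through unchanged.
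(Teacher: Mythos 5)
Your proposal is correct and follows essentially the same route as the paper: the paper likewise observes that partiality of $K$ forces every separation level below a fixed $r_0$, picks a separated pair at controlled $\hat{d}$-distance from both endpoints (it simply takes two points $z,z'$ with $\hat{d}(x,z)=\hat{d}(z',y)=k$ rather than your full sequence $x_i$), and runs the identical witness/pinch-point/quadrangle argument to force the pinch point into $\mc N_{r_0}(g\cdot[y,x])$ before invoking properness. The only quibble is your intermediate bound $\hat{d}(x_{i_0},m)\leq 2+\delta_0$, which should carry a few extra multiples of $\delta_0$ (one only gets $\hat{d}(x_{i_0},x_{i_0+1})\leq 2+4\delta_0$ via the Hausdorff closeness of $[x,y]$ to a $\hat{d}$-geodesic), but since you leave $D$ as an unspecified linear function of $R$, $\delta_0$, $r_0$, this does not affect the argument.
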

\begin{proof}
    Since $X$ is $1$-dense in $\hat{X}_K$ it is enough to show \eqref{eq:non-uni} of Definition \ref{def:non-uniform} for points $x, y\in X$. 
    Let $r\geq 0$ be such that $K(r) = \infty$. Let $R> 0$ and let $x, y\in X$ be points with $\hat{d}(x, y)\geq 2k+2$ for $k = R+r+2\delta_0+2$. Recall that $\delta_0$ is the hyperbolicity constant defined in Remark \ref{rem:delta_0}. Let $z$ be a point on $[x, y]$ with $\hat{d}(x,z) = k$ and let $z'$ be a point on $[z, y]$ with $\hat{d}(z', y ) = k$. With this definition, $\hat{d}(z, z')\geq 2$. Let $\gamma'$ be an $r'$-witness of the separation of $z$ and $z'$ for some $r'$ and let $m$ be the pinch point. Observe that a geodesic can only be an $r'$ witness if $K(r')< \infty$. Since $K(r) = \infty$ and contraction gauges are non-decreasing, we know that $r'< r$.

    Let $\gamma$ be a geodesic from $x$ to $y$ containing $\gamma'$. Let $g\in G$ be such that $\hat{d}(x, gx)< R$ and $\hat{d}(y, gy)< R$. It remains to show that there are at most finitely many such elements $g$. Consider the quadrangle $\mc Q = (\gamma, [y, gy], g\cdot [y, x], [gx, x])$. Since $m$ is the pinch point, there exists a point $p\in [y, gy]\cup g\cdot [y, x]\cup [gx, x]$ with $d(p, m)\leq r'$. 
    Since 
    \begin{align*}
        d(m, [x, gx])\geq \hat{d}(m, z) - \widehat{\diam}([x, gx])\geq k - R - 2\delta_0 > r \geq r',
    \end{align*}
    we have that $p\not \in [x, gx]$. Note that $\widehat{\diam}([x, gx])\leq R + 2\delta_0$ by Remark \ref{rem:on_geodesic}. Similarly, $p\not\in [y, gy]$ and hence $p\in g\cdot[y, x]$. Since the action of $G$ on $X$ is proper, there are only finitely many $g\in G$ with $d(m, g\cdot [y, x])\leq r$. This concludes the proof.
\end{proof}

To prove non-uniform acylindricity for full contraction gauges, we need be able to control closest point projections in $\hat{X}_K$. More precisely, we want to show that if we take the closest point projections (onto a geodesic) with respect to $d$ or $\hat{d}$, then the resulting points are close with respect to $\hat{d}$. We show this in the following lemma.  

\begin{figure}
    \centering
    \includegraphics[width=.95\textwidth]{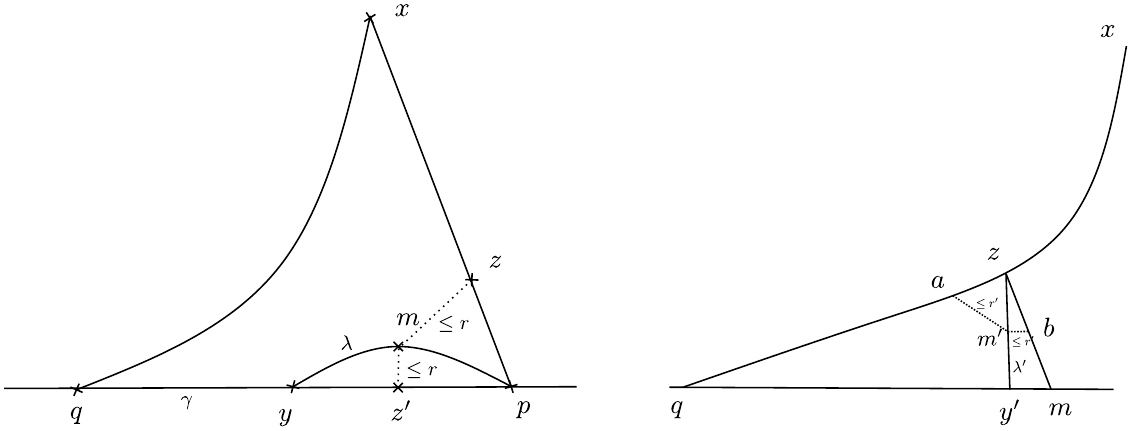}
    \caption{Closest point projections in $\hat{X}_K$}
    \label{fig:closest point proj}
\end{figure}

\begin{lemma}[Closest point projections]\label{lemma:closest_point_projections} Let $x\in X$ be a point, $\gamma: I \to X$ be a geodesic and $p$ (resp $q$) be closest points to $x$ on $\gamma$ with respect to the metric $d$ (resp $\hat{d}$). If $K(r) \geq 10r+1$ for all $r\geq0$, then $\hat{d}(p, q) < 17$.
\end{lemma}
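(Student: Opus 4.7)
I will argue by contradiction; suppose $\hat{d}(p,q) \geq 17$. Since $\hat{d}(p,q) > 1$, the pair $(p,q)$ is not $K$-anti-contracting, so by Definition~\ref{def:anti-contracting} there exist $r \geq 0$, a geodesic $\lambda$ in $X$ from $p$ to $q$, and a subgeodesic $\lambda' \subset \lambda$ of length at least $K(r) \geq 10r+1$ which is $r$-thin. Let $m$ denote the midpoint of $\lambda'$. Because $\lambda'$ has length at least $K(r)$ and sits inside the geodesic $\lambda$, one readily checks that $d(m,p), d(m,q) \geq K(r)/2 \geq 5r+\tfrac12$.

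Next, I will apply the $r$-thinness of $\lambda'$ to two different degenerate quadrangles in $X$, mimicking the strategy in the proof of Proposition~\ref{prop:contraction_space_hyp_for_graphs}. First, the bigon $(\lambda, \gamma_{[p,q]})$ produces a point $w \in \gamma_{[p,q]} \subset \gamma$ with $d(w,m) \leq r$. Second, the degenerate triangle $(\lambda, [q,x], [x,p])$ produces a point $y \in [q,x] \cup [x,p]$ with $d(y,m)\leq r$; in particular $d(y,w)\leq 2r$. I then split into cases according to which of the two sides contains $y$.

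If $y \in [x,p]$, then $y$ lies on a $d$-geodesic from $x$ to $p$, so $d(x,y) + d(y,p) = d(x,p)$. Combined with $d(y,p) \geq d(m,p) - d(y,m) \geq 4r+\tfrac12$, this gives $d(x,y) \leq d(x,p) - 4r - \tfrac12$, and hence
\[
  d(x,w) \;\leq\; d(x,y) + d(y,w) \;\leq\; d(x,p) - 2r - \tfrac12 \;<\; d(x,p).
\]
Since $w\in\gamma$, this contradicts the $d$-minimality of $p$. Note that the hypothesis $K(r) \geq 10r+1$ is used sharply here; the weaker general bound $K(r)\geq 4r+1$ would not force a strict inequality.

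The remaining case $y \in [q,x]$ is the main technical obstacle. The mirror computation only yields $d(x,w) \leq d(x,q) - 2r - \tfrac12$, which is fully compatible with $d(x,p) \leq d(x,w)$ and hence does not contradict the $d$-minimality of $p$. Here one must instead contradict the $\hat{d}$-minimality of $q$. Since $y$ lies on the $d$-geodesic $[x,q]$, applying Remark~\ref{rem:on_geodesic} to the $\hat{d}$-geodesic and the $d$-geodesic from $x$ to $q$ yields $\hat{d}(x,y) + \hat{d}(y,q) \leq \hat{d}(x,q) + 2\delta_0$; together with $\hat{d}(x,w) \leq \hat{d}(x,y) + d(y,w) \leq \hat{d}(x,y) + 2r$ and $\hat{d}(x,q) \leq \hat{d}(x,w)$ (from $w\in\gamma$), the constant $17$ is calibrated so that, after absorbing $\delta_0$, one can exhibit a point on $\gamma$ strictly $\hat{d}$-closer to $x$ than $q$. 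The crux of this case is to obtain a sharp enough lower bound on $\hat{d}(y,q)$ from the $d$-separation $d(y,q)\geq 4r+\tfrac12$, which is where I expect the hyperbolicity of $\hat{X}_K$ to be essential.
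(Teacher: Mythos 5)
Your first case is correct: when the degenerate triangle $(\lambda,[q,x],[x,p])$ produces $y\in[x,p]$ near the pinch point, the estimate $d(x,w)\leq d(x,p)-2r-\tfrac12<d(x,p)$ genuinely contradicts the $d$-minimality of $p$, and you correctly identify that $K(r)\geq 10r+1$ is what makes this work. But the second case, which you leave open, is a genuine gap, and the repair you sketch cannot succeed. Since $\hat d\leq d$ on $X$, the separation $d(y,q)\geq 4r+\tfrac12$ gives no lower bound at all on $\hat d(y,q)$: the pair $(y,q)$ may itself be $K$-anti-contracting with $\hat d(y,q)\leq 1$ however large $d(y,q)$ is. Worse, every inequality in your sketch carries an additive error of order $r$ (e.g.\ $\hat d(x,w)\leq\hat d(x,y)+2r$), and $r$ is completely uncontrolled --- the witness for the pair $(p,q)$ is only guaranteed to exist for \emph{some} $r$, which can be arbitrarily large relative to the absolute target $17$. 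No calibration of the constant, and no appeal to the $\delta_0$-hyperbolicity of $\hat X_K$, can absorb an unbounded term $2r$ measured in the metric $d$.

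The paper sidesteps exactly this by never taking a witness for the pair $(p,q)$. It instead picks $y$ on $\gamma_{[q,p]}$ to be the closest point to $p$ with $\hat d(y,p)\geq 3/2$ and takes a witness $\lambda$ of the separation of $y$ and $p$; since $\hat d(y,p)=3/2$, the claim in the proof of Proposition \ref{prop:contraction_space_hyp_for_graphs} forces $\widehat{\diam}(\lambda)\leq 6$, so the pinch point stays within an absolute $\hat d$-distance of $p$ even though $r$ is unknown. The analogue of your bad case (the nearby point landing on $[x,q]$) is then killed by a second-level argument: one shows that a suitable point $z\in[x,q]$ and a suitable point $y'\in\lambda$ satisfy $\hat d(z,y')>1$, extracts a \emph{second} witness of their separation, and plays two triangles against each other to contradict the minimality of $d(z,m)$ over $[x,q]$. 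Some device of this kind --- localizing the witness so that all error terms are absolute constants rather than multiples of $r$ --- is what your argument is missing.
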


\begin{proof}
    Assume that $\hat{d}(p, q)\geq 17$. Let $y$ be the point on $\gamma_{[q, p]}$ closest to $p$ which satisfies $\hat{d}(y, p)\geq 3/2$. Let $\lambda$ be an $r$-witness with pinch point $m$ of the separation of $y$ and $p$. This is depicted in Figure \ref{fig:closest point proj}. Consider the triangle $\mc Q = (\gamma_{[q, y]}\circ \lambda, [p, x], [x, q])$. Since $m$ is the pinch point of $\lambda$, there exists $z\in [p, x]\cup[x, q]$ with $d(m, z)\leq r$. We show that $z\in[x, q]$. Assume otherwise. Consider the bigon $\mc B = (\lambda, \gamma_{[p, y]})$. There exists a point $z'\in \gamma_{[p, y]}$ with $d(z', m)\leq r$. Hence $d(z, z')\leq 2r$. Lemma \ref{lemma:closet_point_projection_quadrangles} shows that $d(z', p)\leq 4r$ and hence $d(m, p) \leq 5r$. This is a contradiction to $d(m, p) \geq K(r)/2 > 5r$. So indeed $z\in [x, q]$. 

    For the rest of the proof we assume that $z$ is the point on $[x, q]$ closest to $m$. Let $y'$ be the point on $\gamma_{[q, y]}\circ \lambda$ closest to $z$. Since $d(z, m)\leq r$ we know that $d(m, y')\leq 2r$ and hence $y'$ lies on $\lambda$. In the proof of Proposition \ref{prop:contraction_space_hyp_for_graphs} it is shown that if $\hat{d}(a, b)\leq 3/2$ for points $a, b\in X$ and $\eta$ is a geodesic from $a$ to $b$, then $\widehat{\diam}(\eta)\leq \delta = 6$. Applying this to $a= y$ and $b = p$ yields that $\hat{d}(y, y')\leq 6$. Consequently, $\hat{d}(z, y)\leq \hat{d}(z, y') + 6$. Using the triangle inequality, we get that
    \begin{align*}
        17 - 3/2\leq \hat{d}(p, q) - 3/2\leq\hat{d}(q, y)\leq \hat{d}(q, z) + \hat{d}(z, y)\leq 2\hat{d}(z, y) \leq 2\hat{d}(z, y') + 12,
    \end{align*}
    where the fourth inequality holds since $q$ is the closest point on $\gamma$ to $x$ with respect to $\hat{d}$ (and hence the closest point to $z$ on $\gamma$ with respect to $\hat{d}$). Hence $\hat{d}(z ,y') > 1$. In particular, there exists an $r'$-witness $\lambda'$ with pinch point $m'$ of the separation of $y'$ and $z$. Now consider the triangle $(\gamma_{[q, y]}\circ\lambda_{[y, y']} ,\lambda', [x, q]_{[z, q]})$ since $m'$ is a pinch point, there exists a point $a\in\gamma_{[q, y]}\circ\lambda_{[y, y']}\cup [x, q]_{[z, q]}$ with $d(a, m')\leq r'$. Since $y'$ is the closest point on $\gamma_{[q, y]}\circ\lambda$ to $z$, $d(m', \gamma_{[q, y]}\circ\lambda)\geq K(r')/2 > r'$ and hence $a$ lies on $[x, q]_{[z, q]}$. Next consider the triangle $(\lambda', [z, m] ,\lambda_{[m, y']})$. Again since $m'$ is a pinch point, there exists $b\in [z, m] \cup\lambda_{[m, y']} $ with $d(b, m')\leq r'$. Analogously to the other triangle, $b\not\in \lambda_{[m, y']}$ and hence $b\in [z, m]$. Using the triangle inequality, we get that $d(z, b) \geq d(z, m') - d(m', b) \geq K(r')/2-r' > 4r'$, while $d(a, b)\leq 2r'$. Hence $d(a, m) <d(z, m)$ contradicting that $z$ is the point on $[x, q]$ closest to $m$ and thus yielding the desired contradiction to $\hat{d}(p, q) \geq 17$. 
\end{proof}

Next we prove the following lemma, which is a technical lemma about quadrangles with small sides in the contraction space. We will need it both for the proof of Theorem \ref{thmintro:non-uniform-acylindricity} and Proposition \ref{lemma:loxodromic_implies_strongly_contracting}, which shows that elements acting loxodromic on the contraction space $\hat{X}_K$ act strongly-contracting on $X$.

\begin{lemma}\label{lemma:quadrangle-estimates}
    Let $K$ be a full contraction gauge with $K(r)\geq 10r+1$ for all $r\geq 0$. For all $R>0$, there exists $D>0$ such that the following holds. Let $x, y\in X$ be points with $\hat{d}(x, y)\geq D = 2k+2$. Let $x', y'$ be points on $[x, y]$ such that $\hat{d}(x, x') = k$ and $\hat{d}(y, y') = k$. Let $r_0$ be a constant such that $x'$ and $y'$ are $r_0$-separated, we call $r_0$ a \emph{middle separator of $x$ and $y$}. Let $x'', y''$ be points with $\hat{d}(x, x'')<R$ and $\hat{d}(y, y'') < R$. Then $d([x, y], [x'', y''])\leq r_0$.
\end{lemma}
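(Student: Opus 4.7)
The plan is to exploit the $r_0$-separation of $x'$ and $y'$ to place a pinch point $m$ on the chosen geodesic $[x, y]$, and then apply the $r_0$-thin property to a quadrangle having $[x'', y'']$ as one side. The main obstacle is to force the pinching onto the $[x'', y'']$ side rather than onto the short sides $[x'', x]$ or $[y, y'']$; this is handled by a $\hat{d}$-distance estimate that rests on hyperbolicity of $\hat{X}_K$.

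To place $m$ on $[x, y]$, pick any $r_0$-witness $\gamma_0$ from $x'$ to $y'$ with an $r_0$-thin subsegment $\gamma_0''$ of length at least $K(r_0) \geq 10r_0 + 1$, and let $m := m_{\gamma_0''}$. The concatenation $[x, x'] \cdot \gamma_0 \cdot [y', y]$ has total length $d(x, x') + d(x', y') + d(y', y) = d(x, y)$ and is therefore itself a geodesic from $x$ to $y$. Replacing the initial $[x, y]$ by this concatenation---a substitution which keeps $x'$ and $y'$ on $[x, y]$ and does not alter any of the hypotheses---we may assume $\gamma_0'' \subset [x, y]$, and in particular $m \in [x, y]$.

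Set $D = 2k + 2$ with $k$ chosen large in terms of $R$ and $\delta_0$, and consider the quadrangle $\mc Q = ([x, y], [y, y''], [y'', x''], [x'', x])$. By the $r_0$-thin property of $\gamma_0'' \subset [x, y]$, some point $p$ on $[y, y''] \cup [y'', x''] \cup [x'', x]$ satisfies $d(p, m) \leq r_0$. If $p \in [y'', x'']$, then since $m \in [x, y]$ we obtain $d([x, y], [x'', y'']) \leq d(m, p) \leq r_0$, as desired. The symmetric bad cases $p \in [x'', x]$ and $p \in [y, y'']$ must be excluded: by Remark \ref{rem:on_geodesic}, $\widehat{\diam}([x'', x]) \leq R + 2\delta_0$, so $p \in [x'', x]$ would force $\hat{d}(m, x) \leq \hat{d}(m, p) + R + 2\delta_0 \leq r_0 + R + 2\delta_0$.

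The crux is therefore to establish $\hat{d}(m, x), \hat{d}(m, y) > r_0 + R + 2\delta_0$. For this I appeal to hyperbolicity of $\hat{X}_K$: letting $\hat{\alpha}$ be an $\hat{X}_K$-geodesic from $x$ to $y$, Remark \ref{rem:on_geodesic} provides that $\widehat{[x, y]}$ lies at $\hat{d}$-Hausdorff distance at most $\delta_0$ from $\hat{\alpha}$, and a standard $\delta$-hyperbolic argument gives coarse monotonicity of closest-point projection $\widehat{[x, y]} \to \hat{\alpha}$. Since $x, x', m, y', y$ appear in that order along $\widehat{[x, y]}$, their projections appear in the same order along $\hat{\alpha}$ up to additive error $O(\delta_0)$; combined with $\hat{d}(x, x') = \hat{d}(y, y') = k$, this places the projection of $m$ at $\hat{d}$-distance at least $k - O(\delta_0)$ from both endpoints, and hence $\hat{d}(m, x), \hat{d}(m, y) \geq k - O(\delta_0)$. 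Choosing $k$ larger than $r_0 + R + O(\delta_0)$ now closes the argument, with some additional care required for the $R$-uniformity of $D$ when $r_0$ is large (where one may further exploit that $K(r_0) \geq 10r_0 + 1$ forces the $X$-length between $x'$ and $y'$ to grow with $r_0$).
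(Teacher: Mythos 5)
Your setup coincides with the paper's: you replace $[x,y]$ by the concatenation through an $r_0$-witness so that the pinch point $m$ lies on a geodesic from $x$ to $y$, form the quadrangle with $[x'',x]$, $[y'',x'']$, $[y,y'']$ as the other three sides, and try to rule out the pinching landing on the two short sides. The gap is in how you rule them out. Your exclusion of $p\in[x'',x]$ reads $\hat{d}(m,x)\leq\hat{d}(m,p)+\hat{d}(p,x)\leq r_0+R+2\delta_0$, so you need $\hat{d}(m,x)>r_0+R+2\delta_0$. The available lower bound is $\hat{d}(m,x)\geq k-\delta_0$ (this already follows from Remark \ref{rem:on_geodesic}, since $x'$ lies on the subgeodesic from $x$ to $m$; no projection-monotonicity argument is needed), so you are forced to take $k>r_0+R+O(\delta_0)$. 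But $D=2k+2$ must depend only on $R$, while the middle separator $r_0$ is not bounded in terms of $R$: the smallest $r$ for which $x'$ and $y'$ are $r$-separated can be arbitrarily large. Your suggested repair --- that $K(r_0)\geq 10r_0+1$ forces $d(x',y')$ to grow with $r_0$ --- does not help, because growth of $d$-distances gives no lower bound on $\hat{d}$-distances (collapsing $d$-long segments to $\hat{d}$-short ones is the whole point of the construction), so $\hat{d}(x,m)$ need not exceed $k+O(\delta_0)$ no matter how large $r_0$ is.

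The paper's proof avoids letting $r_0$ enter the final inequality. Supposing $z\in[x'',x]$ with $d(z,m)\leq r_0$, it considers the closest-point projections $p$ (w.r.t.\ $d$) and $q$ (w.r.t.\ $\hat{d}$) of $z$ onto the concatenated geodesic. Since $d(z,p)\leq d(z,m)\leq r_0$, one gets $d(p,m)\leq 2r_0<5r_0+\tfrac{1}{2}\leq d(m,\{x',y'\})$, so $p$ lies on the witness, beyond $x'$, whence $\hat{d}(x,p)\geq k-\delta_0$ by Remark \ref{rem:on_geodesic}; on the other hand $\hat{d}(x,q)\leq 2\hat{d}(x,z)\leq 2R+2\delta_0$. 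Thus $\hat{d}(p,q)\geq k-2R-3\delta_0$, and taking $k=2R+3\delta_0+18$ contradicts the uniform bound $\hat{d}(p,q)<17$ of Lemma \ref{lemma:closest_point_projections}. Note that $r_0$ appears only in the comparison $2r_0<5r_0+\tfrac{1}{2}$, where it cancels; this is exactly where the hypothesis $K(r)\geq 10r+1$ and Lemma \ref{lemma:closest_point_projections} do the work that your direct estimate cannot. You would need to import this mechanism (or an equivalent $r_0$-free one) to close your argument.
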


\begin{figure}
    \centering
    \includegraphics{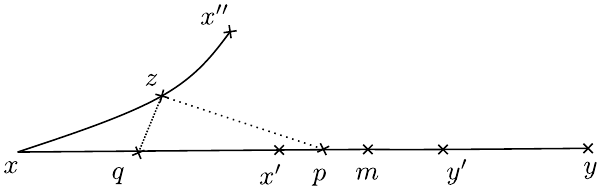}
    \caption{Quadrangle estimates}
    \label{fig:non-uni}
\end{figure}

\begin{proof}
    Let $R > 0$ we show that the statement holds for $k = 2R + 3\delta_0 + 18 $. Recall that $\delta_0$ is the hyperbolicity constant defined in Remark \ref{rem:delta_0}. 
    By the triangle inequality $\hat{d}(x', y') \geq 2$, so there does indeed exists $r_0$ such that $x'$ and $y'$ are $r_0$-separated. Let $\lambda$ be an $r_0$-witness with pinch point $m$ of the separation of $x'$ and $y'$. 
    
    Consider the quadrangle $\mc Q = ([x, x']\circ\lambda\circ[y', y], [y, y''], [y'', x''], [x'', x])$. Since $m$ is a pinch point, there exists $z\in [y, y'']\cup [y'', x'']\cup [x'', x]$ with $d(z, m)\leq r_0$. Assume that $z\in [x'', x]$, this is depicted in Figure \ref{fig:non-uni}. Let $p$ and $q$ be the closest point projections of $z$ onto $[x, x']\circ\lambda\circ[y', y]$ with respect to the metrics $d$ and $\hat{d}$. We have that $d(z, p)\leq d(z ,m)\leq r_0$ and hence $d(p, m)\leq 2r_0$, implying that $p\in \lambda$ and hence $\hat{d}(x, p)\geq \hat{d}(x,x') - \delta_0 = k - \delta_0$, by Remark \ref{rem:on_geodesic}. 
    
    Next we study $q$. Since $\hat{d}(x, x'') < R$ and $z$ lies on $[x'', x]$ we have that $\hat{d}(z, x)\leq R + \delta_0$ (see Remark \ref{rem:on_geodesic}). Consequently $\hat{d}(q, x)\leq \hat{d}(x, z) + \hat{d}(z, q)\leq 2\hat{d}(z, x) \leq 2R + 2\delta_0$. But now $\hat{d}(p, q)\geq \hat{d}(x, p) - \hat{d}(x, q)\geq k - 2R - 3\delta_0> 17$, a contradiction to Lemma \ref{lemma:closest_point_projections}, implying that $z\not\in [x'', x]$. Analogously we can show that $z\not\in [y, y'']$. Consequently, $z\in [y'', x'']$, which concludes the proof.
\end{proof}

\begin{thm}[Non-uniform acylindricity, generalization of Theorem \ref{thmintro:non-uniform-acylindricity}]\label{thm:non-uniform-action-full}
    Let $K$ be a full contraction gauge with $K(r) \geq 10r +1$ and let $G$ be a group acting properly on $X$. Then the action of $G$ on $\hat{X}_K$ is non-uniformly acylindrical. Furthermore, if there exists an element $g\in G$ whose axis in $X$ is strongly contracting, then $G\acts \hat{X}$ has unbounded orbits.
\end{thm}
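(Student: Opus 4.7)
The plan is to combine Lemma \ref{lemma:quadrangle-estimates} with properness of the action on $X$. Given $R > 0$, I let $D$ be the constant produced by Lemma \ref{lemma:quadrangle-estimates} applied to $R + 2$, increased by $2$ so that, using $1$-density of $X$ in $\hat{X}_K$, it suffices to treat pairs $x, y \in X$ with $\hat{d}(x, y) \geq D$. For such a pair I pick $x', y' \in [x, y]$ as in the lemma, so that $\hat{d}(x', y') \geq 2$; this forces the pair $(x', y')$ to fail to be $K$-anti-contracting, producing a middle separator $r_0 = r_0(x, y)$, an $r_0$-witness $\lambda$, and a pinch point $m \in X$.

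For any $g \in G$ with $\hat{d}(x, gx) < R$ and $\hat{d}(y, gy) < R$, I apply Lemma \ref{lemma:quadrangle-estimates} with $x'' = gx$ and $y'' = gy$. What I actually need is the stronger conclusion implicit in its proof: the quadrangle argument there locates a point $z_g \in [gx, gy] = g \cdot [x, y]$ with $d(z_g, m) \leq r_0$, rather than merely bounding $d([x,y], [gx, gy])$. Consequently the translate $g \cdot [x, y]$ meets the bounded set $\overline{B}_{r_0}(m) \subset X$.

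Since both $[x, y]$ and $\overline{B}_{r_0}(m)$ are bounded subsets of $X$ and the action of $G$ on $X$ is proper, only finitely many $g$ can satisfy $g \cdot [x, y] \cap \overline{B}_{r_0}(m) \neq \emptyset$, giving non-uniform acylindricity. For the second assertion, the default gauge $K(r) = 10r + 1$ is full, so $K(\Phi^4(C)) < \infty$ for any $C$; Proposition \ref{prop:strongyl_contracting_implies_wpd} then says that any element $g \in G$ with $C$-contracting axis acts loxodromically on $\hat{X}$, and in particular has unbounded orbit, as required.

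The main obstacle is the mismatch between what Lemma \ref{lemma:quadrangle-estimates} states ($d([x,y], [x'', y'']) \leq r_0$) and what its proof really gives (a point on $[x'', y'']$ within $r_0$ of the fixed pinch point $m$). Only the latter pins each translate $g \cdot [x, y]$ to a single bounded region in $X$, which is precisely what properness requires; everything else is routine once this strengthening is extracted.
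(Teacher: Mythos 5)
Your argument is correct and is essentially the paper's proof: the non-uniform acylindricity is obtained from Lemma \ref{lemma:quadrangle-estimates} applied with $x''=gx$, $y''=gy$ together with properness of $G\acts X$, and the furthermore part from Lemma \ref{lemma:contracting_implies_quasi_geodesic} (your route via Proposition \ref{prop:strongyl_contracting_implies_wpd} amounts to the same thing). However, the ``main obstacle'' you identify is not actually an obstacle: the conclusion as stated, $d([x,y],[gx,gy])\leq r_0$, already suffices. The separator $r_0$ is determined by the fixed pair $(x,y)$ and is independent of $g$, and $[x,y]$ is a bounded subset of $X$; so if some geodesic from $gx$ to $gy$ comes $r_0$-close to $[x,y]$, the triangle inequality gives $d(x,gx)\leq 2d(x,y)+r_0$, and metric properness of $G\acts X$ immediately yields finiteness of the set of such $g$ (with a bound depending on $x,y$, which is all that non-uniform acylindricity demands). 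Your extraction of the pinch point $m$ from the proof of Lemma \ref{lemma:quadrangle-estimates} is therefore unnecessary, though harmless.
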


\begin{proof}
    The furthermore part follows from Lemma \ref{lemma:contracting_implies_quasi_geodesic}. Since $X$ is $1$-dense in $\hat{X}_K$ it is enough to show \eqref{eq:non-uni} of Definition \ref{def:non-uniform} for points $x, y\in X$. Let $R > 0$ and let $D>0$ be as in Lemma \ref{lemma:quadrangle-estimates}. 
    
    Let $g\in G$ be an element such that $\hat{d}(x, gx) < R$ and $\hat{d}(y, gy) < R$. By Lemma \ref{lemma:quadrangle-estimates}, there exists $r_0\geq 0$ (not depending on $g$) such that $d([gx, gy], [x,y])\leq r_0$. Due to the properness of the action of $G$ on $X$, there are only finitely many elements $g\in G$ with $d([gx, gy], [x, y])\leq r_0$, which concludes the proof.
\end{proof}

The following proposition is a strengthening of Proposition \ref{prop:wpd_implies_strongly_contracting}. We will use it in the next section.

\begin{prop}\label{lemma:loxodromic_implies_strongly_contracting}
    Let $K$ be a full contraction gauge with $K(r)\geq 10r+1$ for all $r\geq 0$. Let $G$ be a group acting on $X$. If an element $g\in G$ acts loxodromically on $\hat{X}_K$, then $g$ acts strongly-contracting on $X$. 
\end{prop}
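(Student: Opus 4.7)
The first half of the conclusion, that $g$ acts loxodromically on $X$, is immediate. Since $\hat d\leq d$ on $X$, the loxodromic bound $\hat d(x_0, g^n x_0)\geq cn-c'$ transfers to $d$, and the trivial subadditive upper bound $d(x_0, g^n x_0)\leq |n|\tau$ (with $\tau=d(x_0, gx_0)$) shows that $n\mapsto g^n x_0$ is a quasi-isometric embedding $\Z\hookrightarrow X$.

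For the strongly-contracting quasi-axis, the natural candidate is the $g$-invariant bi-infinite path $\alpha$ obtained by concatenating the geodesic segments $[g^i x_0, g^{i+1}x_0]$, parameterized by $X$-arc length. The loxodromic lower bound together with the triangle inequality shows directly that $\alpha$ is a uniform $(Q,Q)$-quasi-geodesic in $X$. Combining the linear growth $\hat d(g^i x_0, g^j x_0)\geq c|i-j|-c'$ with the fact that each constituent segment has $\hat d$-diameter at most $\tau+2\delta_0$ (Remark \ref{rem:on_geodesic}), one also deduces that $\hat\alpha$ is a uniform quasi-geodesic in $\hat X_K$.

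To upgrade $\alpha$ to strong contraction, my plan is to work with the long truncations $\gamma_L=[g^{-L}x_0, g^L x_0]$, which are honest $X$-geodesics, and show that $\hat\gamma_L$ is a $Q''$-quasi-geodesic in $\hat X_K$ with $Q''$ independent of $L$. By Lemma \ref{lemma:quasi-geodesic_image_implies_strongly_contracting} this would give uniform $C$-contraction of each $\gamma_L$; then since $\alpha|_{[-L,L]}$ is a quasi-geodesic sharing endpoints with the $C$-contracting geodesic $\gamma_L$, Lemma \ref{lemma:hausdorff_contraction}(2) propagates the contraction constant to $\alpha|_{[-L,L]}$ uniformly in $L$, and a routine extension argument (every finite subsegment of $\alpha$ lies in some $\alpha|_{[-L,L]}$) then transfers this uniform contraction constant to all of $\alpha$.

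The main obstacle is proving that $\hat\gamma_L$ is uniformly a quasi-geodesic, and it is here that the stronger hypothesis $K(r)\geq 10r+1$ becomes essential: it unlocks both Lemma \ref{lemma:closest_point_projections} (comparing $d$- and $\hat d$-closest point projections onto geodesics) and Lemma \ref{lemma:quadrangle-estimates} (quadrangle estimates). The idea is that any $\hat d$-backtracking on $\gamma_L$, i.e.\ a pair $p,q\in\gamma_L$ with $\hat d(p,q)$ uniformly bounded while $d(p,q)$ is arbitrarily large, should contradict the loxodromic $\hat d$-translation length: applying Lemma \ref{lemma:quadrangle-estimates} to $(g^{-L}x_0, g^L x_0)$ against $g$-translates $(g^{-L+m}x_0, g^{L+m}x_0)$ forces $\gamma_L$ to fellow-travel its $g$-shifts in $d$, and Lemma \ref{lemma:closest_point_projections} then allows one to locate orbit points on $\gamma_L$ up to bounded $\hat d$-error; reading off the $\hat d$-progress along $\gamma_L$ from these orbit positions would then produce a linear lower bound on $\hat d(p,q)$ in terms of $d(p,q)$. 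The subtle quantitative point is to control the middle separator of Lemma \ref{lemma:quadrangle-estimates} uniformly in $L$, for which I would exploit the $g$-equivariance of the whole configuration to reduce to a bounded region, and the hypothesis $K(r)\geq 10r+1$ to get the projection control needed to close the argument.
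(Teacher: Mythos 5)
Your overall architecture matches the paper's: establish that the $X$-geodesics $[g^{-L}x_0,g^Lx_0]$ map to uniform quasi-geodesics in $\hat X_K$, apply Lemma \ref{lemma:quasi-geodesic_image_implies_strongly_contracting} to get uniform contraction, and then transfer this to the $g$-invariant orbit path $\alpha$. Your endgame (Lemma \ref{lemma:hausdorff_contraction}(2) plus exhaustion by finite subsegments) is a perfectly good alternative to the paper's Arzel\`a--Ascoli limit, and arguably cleaner since it hands you a $g$-invariant contracting quasi-axis directly. The problem is that the one step you defer --- uniform quasi-geodesity of $\hat\gamma_L$ --- is the entire content of the proof, and the way you propose to attack it does not work as stated.

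Concretely: you apply Lemma \ref{lemma:quadrangle-estimates} to the pair $(g^{-L}x_0,g^Lx_0)$ against its translate $(g^{-L+m}x_0,g^{L+m}x_0)$. First, the conclusion of that lemma is only that the two geodesics come within distance $r_0$ at a \emph{single} point, not that they fellow-travel, so even a successful application does not give what you claim. Second, and more seriously, the constant $r_0$ is a middle separator of the pair $(x,y)$ to which the lemma is applied; for the pair $(g^{-L}x_0,g^Lx_0)$ this depends on $L$, and the pairs for different $L$ are \emph{not} related by the $G$-action, so ``$g$-equivariance of the whole configuration'' gives you nothing here --- this is exactly the uncontrolled quantity you flag, and flagging it is not resolving it. The fix, which is the key idea of the paper's proof, is to apply Lemma \ref{lemma:quadrangle-estimates} not to the far endpoints but segment-by-segment: fix $k$ with $\hat d(x_0,g^kx_0)\geq D(R)$, mark points $y_i$ on the long geodesic with $\hat d(y_i,g^{ik}x_0)\leq R$ (using hyperbolicity of $\hat X_K$ and the fact that the orbit path is a uniform quasi-geodesic there), and apply the lemma to each pair $(g^{ik}x_0,g^{(i+1)k}x_0)$ against $(y_i,y_{i+1})$. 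Since these pairs are all $g^{ik}$-translates of the single fixed pair $(x_0,g^kx_0)$, the middle separator is one constant $r_0$ independent of $i$, $L$ and $N$; this pins a point $z_i$ of the long geodesic within bounded $d$-distance of each $g^{ik}x_0$, which simultaneously yields the fellow-travelling and the linear lower bound on $\hat d$ along $\gamma_L$. (Lemma \ref{lemma:closest_point_projections} is then not needed directly --- it is already consumed inside the proof of Lemma \ref{lemma:quadrangle-estimates}.) Without this reduction to translates of a fixed pair, your argument has a genuine gap.
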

\begin{proof}
    Let $x\in X$ and let $g\in G$ be an element acting loxodromically on $\hat{X}_K$. In other words, the map $i\mapsto g^ix$ from $\Z\to (\hat{X}_K, \hat{d})$ (and consequently from $\Z$ to $(X, d)$) is a quasi-isometric embedding. In particular, there exists a constant $Q$ such that the path $\gamma_k : \R \to \hat{X}_K$
    \begin{align*}
        \gamma_k = \cdots \circ [g^{ik}x, g^{(i+1)k}]\circ [g^{(i+1)k}x, g^{(i+2)k}x]\circ \cdots
    \end{align*}
    is a $Q$-quasi-geodesic for all $k\in \N$. Since $\hat{X}_K$ is hyperbolic, there exists a constant $\delta$ such that the Hausdorff distance between a geodesic and a $Q$-quasi-geodesic with the same endpoints is at most $\delta$. Let $R = \delta + \delta_0+1$. With this, we have that $[x, g^Nx]$ and $(\gamma_k)_{[x, g^Nx]}$ have Hausdorff less than $R$ (with respect to $\hat{d}$) for all $N, k$ such that $N$ is divisible by $k$.

    Let $D$ be the constant from Lemma \ref{lemma:quadrangle-estimates}, when applying it to $R$. Let $k$ be an integer such that $\hat{d}(x, g^kx)\geq D$. Let $D' = d(x, g^kx)$.

    For the moment, fix $N$ such that $N$ is divisible by $k$. Denote by $y_i$ a point on $[x, g^Nx]$ such that $\hat{d}(y_i, g^{ik}x)\leq R$. Let $r_0$ be a middle separator of $x$ and $g^kx$. By Lemma \ref{lemma:quadrangle-estimates}, there exists $m\in [x, g^kx]$ and for all $i\in \Z$, there exists $z_i\in [x, g^Nx]_{[y_i, y_{i+1}]}$ such that $d(z_i, g^{ik}m)\leq r_0$. In particular, $d(z_i, g^{ik}x)\leq D' + r_0$. Implying that $d(z_i, z_{i+1})\leq 3D' + 2r_0$ and consequently that the Hausdorff distance between $(\gamma_k)_{[x, g^Nx]}$ and $[x, g^Nx]$ is bounded by $6D' + 4r_0$. Furthermore, we have that $\hat{d}(z_i, z_{j})\geq \hat{d}(g^{ik}x, g^{jk}x)  - 2D' - 2r_0$. In particular, $\widehat{[x, g^Nx]}$ is a $C$-quasi-geodesic for some constant $C$ not depending on $N$.

    Next we consider the sequence of geodesics $([g^{-ik}x, g^{ik}x])_{i\in \N}$. By Arzela-Ascoli, there exists a subsequence converging to a geodesic $\lambda$. Above we have shown that $\widehat{[g^{-ik}x, g^{ik}x]}$ is a $C$-quasi-geodesic for all $i$ and hence $\lambda$ is also a $C$-quasi-geodesic. Lemma \ref{lemma:quasi-geodesic_image_implies_strongly_contracting} implies that $\lambda$ is strongly-contracting. Above we have shown that the Hausdorff distance (with respect to $d$) between $[g^{-ik}x, g^{ik}x]$ and $(\gamma_k)_{[g^{-ik}x, g^{ik}x]}$ is uniformly bounded. Consequently, the Hausdorff distance of $\lambda$ and $\gamma_k$ is bounded. Hence $g$ is indeed strongly contracting.
\end{proof}

\subsection{Stable subgroups and universal recognizing spaces}

In this section we prove generalized versions of Theorem \ref{intro:stable} and Theorem \ref{intro:quasi-geodesics}. In particular, we show that if the action of a finitely generated group $G$ on $X$ is geometric and $X$ is Morse-dichotomous, then stable subgroups of $G$ embed quasi-isometrically into the contraction space (implying that the contraction space is a universal recognizing space) and the action along their orbit is acylindrical.

We start with proving a generalized version of Theorem \ref{intro:quasi-geodesics}.

\begin{thm}[Generalization of Theorem \ref{intro:quasi-geodesics}]\label{intro:quasi-geodesics-gen}
    Let $K$ be a full contraction gauge with $K(r)\geq 10r+1$ for all $r\geq 0$. Let $X$ be a geodesic metric space. Let $C\geq 0$ be a constant and let $Y\subset X$ be a subset such that $[x, y]$ is $C$-contracting for all $x, y\in Y$. Then the inclusion $(Y, d)\hookrightarrow (\hat{X}, \hat{d})$ is a quasi-isometric embedding. Moreover, if $G$ is a group acting properly on $X$, then the action $G\acts \hat{X}$ is acylindrical along $Y$.  
\end{thm}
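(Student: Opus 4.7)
The plan is to prove the two assertions in sequence, exploiting the uniformity of the $C$--contraction hypothesis on $Y$ throughout. For the quasi-isometric embedding, fix $x,y\in Y$ and observe that since $K$ is full we have $K(\Phi(C)^3)<\infty$; Lemma~\ref{lemma:contracting_implies_quasi_geodesic} then provides a single constant $Q=Q(C,K)$ for which $\widehat{[x,y]}$ is a $Q$--quasi-geodesic in $\hat{X}$. Evaluating the quasi-geodesic inequality at the endpoints yields $\hat{d}(x,y)\geq (d(x,y)-Q^2)/Q$, which combined with the trivial $\hat{d}(x,y)\leq d(x,y)$ gives a $(Q,Q)$--quasi-isometric embedding of $(Y,d)$ into $(\hat{X},\hat{d})$.

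For the acylindricity along $Y$, fix $R>0$ and take $k_0=2R+3\delta_0+18$ as in Lemma~\ref{lemma:quadrangle-estimates}. Using the first part, I would choose $D$ large enough so that whenever $x,y\in Y$ with $\hat{d}(x,y)\geq D$, the points $x',y'$ on $[x,y]$ with $\hat{d}(x,x')=\hat{d}(y,y')=k_0$ additionally satisfy $d(x',y')\geq K(\Phi(C))$; this is possible because the quasi-isometric embedding relates $d$ and $\hat{d}$ along $[x,y]$. By Lemma~\ref{lemma:full_strong_contraction_equivalence}, the $C$--contraction of $[x,y]$ upgrades to $\Phi(C)$--quadrangle contraction, so the subsegment $[x',y']$ of length at least $K(\Phi(C))\geq 3\Phi(C)$ is $\Phi(C)$--thin and hence witnesses $(K,\Phi(C))$--separation of $x',y'$. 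The key point is that $\Phi(C)$ is then a middle separator of the pair $(x,y)$ depending only on $C$, not on the chosen $x,y\in Y$. Applying Lemma~\ref{lemma:quadrangle-estimates} yields, for every $g\in G$ with $\hat{d}(x,gx)<R$ and $\hat{d}(y,gy)<R$, a point $z_g\in[gx,gy]$ with $d(z_g,m)\leq \Phi(C)$, where $m$ is the pinch point on $[x',y']$.

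The main obstacle is converting this uniform closeness $d(m,g[x,y])\leq \Phi(C)$ into a \emph{uniform} bound on the number of such $g$ via properness of $G\acts X$; this is precisely what the pair-dependent separator in the proof of Theorem~\ref{thm:non-uniform-action-full} failed to provide. The plan is to use that $g[x,y]$ is itself $C$--contracting (by $G$--equivariance) to pin down $z_g$ to a uniformly bounded portion of $g[x,y]$ near the closest-point projection $\pi_{g[x,y]}(m)$, and then apply properness to a ball around $m$ of radius depending only on $R$ and $C$ to obtain the required uniform count $n=n(R,C,K)$, completing the acylindricity statement.
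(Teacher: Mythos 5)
Your first part is correct and is exactly the paper's argument: Lemma \ref{lemma:contracting_implies_quasi_geodesic} gives a uniform $Q$ such that $\widehat{[x,y]}$ is a $Q$--quasi-geodesic for all $x,y\in Y$, and evaluating at the endpoints yields the quasi-isometric embedding.

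The acylindricity part has a genuine gap at precisely the step you flag as ``the main obstacle''. From Lemma \ref{lemma:quadrangle-estimates} you obtain $z_g\in[gx,gy]$ with $d(z_g,m)\leq\Phi(C)$, but to invoke properness you must exhibit a point that $g$ moves a uniformly bounded $d$--distance, and your construction provides none: the point $g^{-1}z_g$ lies somewhere on $[x,y]$ with no control on $d(g^{-1}z_g,m)$, and $d(m,gm)$ is likewise uncontrolled (an element can move $x$ a huge $d$--distance while moving it a small $\hat{d}$--distance, sliding $g[x,y]$ along itself). Pinning $z_g$ near $\pi_{g[x,y]}(m)$ is automatic --- both lie in a $\Phi(C)$--ball about $m$ --- and does not resolve this. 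The quantitative obstruction is that your $m$ is the midpoint of $[x',y']$, hence at $\hat{d}$--distance roughly $\hat{d}(x,y)/2$ from $x$, which is unbounded over the pairs you must treat, so the quasi-geodesic property of $\widehat{[x,y]}$ cannot convert anything in your setup into a uniform $d$--bound. The paper's proof avoids Lemma \ref{lemma:quadrangle-estimates} altogether: it takes $m\in[x,y]$ with $\hat{d}(x,m)=D/2$ \emph{fixed}, runs the quadrangle argument directly with the uniform quadrangle-contraction constant $r=r(C)$ to find $p\in g\cdot[y,x]$ with $d(p,m)\leq r$, rules out the sides $[x,gx]$ and $[y,gy]$, and then uses that both $\widehat{[x,y]}$ and $g\cdot\widehat{[y,x]}$ are $Q$--quasi-geodesics to get $d(x,m)\leq QD/2+Q^2$ and $d(gx,p)\leq Q(D/2+R+r)+Q^2$, hence $d(x,gx)\leq D'$ uniformly; properness is then applied to the orbit of $x$ itself. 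Your argument would be repaired by the analogous choice, i.e.\ placing the separating thin segment at bounded $\hat{d}$--distance from $x$ rather than in the middle of $[x,y]$.
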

\begin{proof}
    Using Lemma \ref{lemma:contracting_implies_quasi_geodesic}, we get a constant $Q$ such that $\widehat{[x, y]}$ is a $Q$-quasi-geodesics for all $x, y\in Y$. In particular, the inclusion $(Y, d)\hookrightarrow (\hat{X}, \hat{d})$ is a quasi-isometric embedding. 

    By Lemma \ref{lemma:full_strong_contraction_equivalence}, we get a constant $r$ such that $[x, y]$ is $r$--quadrangle-contracting for all $x, y\in Y$.

    It remains to show that $G$ acts acylindrically along $Y$. Let $R> 0$ define $D = 2R + 6r + 2\delta_0 + 1$. We will show that for all $x, y \in Y$ with $\hat{d}(x, y)\geq D$, we have that 
    \begin{align*}
        \abs{\{g\in G | \hat{d}(x, gx) < R, \hat{d}(y, gy) < R\}} < n,
    \end{align*}
    for some constant $n$ not depending on $x$ and $y$. Let $x, y\in Y$ with $d(x, y) > D$ and let $g\in G$ such that $\hat{d}(x, gx)<R$ and $\hat{d}(y, gy) < R$.  We will show that $d(x, gx) < D'$ for $D' = Q(D + R + r + 2Q)+r$. Then the properness of the action of $G$ on $X$ concludes the proof. 

    Let $m\in [x, y]$ be a point such that $\hat{d}(x, m) = D/2$. In particular, $\hat{d}(m, y)\geq D/2$, $d(x, m)\geq \hat{d}(x , m) \geq 3r$ and $d(y, m) \geq 3r$. Consider the quadrangle $\mc Q = ([x, y],[y, gy] ,g\cdot[y, x], [gx, x])$. Since $[x, y]$ is $r$--quadrangle-contracting (and $d(m, \{x, y\})\geq 3r$), there exists a point $p\in [y, gy]\cup g\cdot[y, x]\cup [gx ,x]$ such that $d(p, m)\leq r$. By Remark \ref{rem:on_geodesic} and the triangle inequality, for any point $q\in [x, gx]$ (or $q\in [y, gy]$) we have that $\hat{d}(m, q)\geq D/2 - R - \delta_0 > r$ and hence $d(m, q)> r$. Thus, $p\in g\cdot [y, x]$. Furthermore, by the triangle inequality, $\hat{d}(gx, p)\leq D/2 + R + r$. Since $\widehat{[y, x]}$ (and hence $g\cdot \widehat{[y, x]}$) is a $Q$-quasi-geodesic, we have that $d(gx, p)\leq Q( D/2 + R + r) + Q^2$ and $d(x, m)\leq QD/2 + Q^2$. Recall that $D' = Q(D + R + r + 2Q)+r$. By the triangle inequality, $d(gx, x)\leq D'$. 
\end{proof}

\begin{thm}[Generalization of Theorem \ref{intro:stable}]
    Let $K$ be a full contraction gauge with $K(r)\geq 10r+1$. Let $G$ be a finitely generated group acting geometrically on a Morse-dichotomous space $X$ and let $H\subset G$ be a stable subgroup. Then $H$ quasi-isometrically embeds into $\hat{X}_K$ and the action of $G$ on $\hat{X}_K$ is acylindrical along the orbit of $H$. In particular, $\hat{X}_K$ is a universal recognition space of $G$.
\end{thm}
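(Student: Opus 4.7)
The plan is to reduce the theorem directly to Theorem \ref{intro:quasi-geodesics-gen} applied with $Y$ equal to an orbit of $H$. Fix a basepoint $x_0 \in X$ and let $Y = H \cdot x_0$. Since $G$ acts geometrically on $X$, the Švarc--Milnor lemma gives that the orbit map $G \to X$, $g \mapsto g x_0$, is a quasi-isometry, so any word metric on $G$ is quasi-isometric to the pullback of $d$.

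The heart of the argument is to produce a constant $C \geq 0$ such that for every $y_1, y_2 \in Y$, every geodesic $[y_1, y_2]$ in $X$ is $C$-contracting. Since $H$ is stable, there is a single Morse gauge $M$ such that every word-geodesic in $G$ between two elements of $H$ is $M$-Morse. Pushing such a Cayley geodesic forward through the orbit map produces an $M'$-Morse quasi-geodesic in $X$ from $y_1$ to $y_2$, with $M'$ depending only on $M$ and the quasi-isometry constants. Lemma \ref{lemma:morse-traced-by-geodesic} then yields a geodesic at bounded Hausdorff distance from this quasi-geodesic, and the standard fact that geodesics close to a Morse quasi-geodesic inherit Morseness with controlled gauge lets us conclude that $[y_1, y_2]$ itself is $M''$-Morse, for some Morse gauge $M''$ independent of $y_1, y_2$. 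Invoking Morse-dichotomy of $X$ converts this into the desired uniform bound: there is $C$ such that every $[y_1, y_2]$ with $y_1, y_2 \in Y$ is $C$-contracting.

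With this $C$ in hand, Theorem \ref{intro:quasi-geodesics-gen} applied to $Y$ immediately gives both that the inclusion $(Y, d) \hookrightarrow (\hat{X}_K, \hat{d})$ is a quasi-isometric embedding and that $G \acts \hat{X}_K$ is acylindrical along $Y$, which is precisely the orbit of $H$. Stability of $H$ includes that $H$ is undistorted in $G$, and combining this with the Švarc--Milnor quasi-isometry $G \to X$ shows that the orbit map $H \to Y$ is itself a quasi-isometry. Composing with the quasi-isometric embedding $Y \hookrightarrow \hat{X}_K$ produces a quasi-isometric embedding $H \to \hat{X}_K$ realised by the orbit map. Since the argument works verbatim for every stable subgroup $H \leq G$, the space $\hat{X}_K$ is a universal recognizing space.

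The only genuine technical point is the first step, namely transporting uniform Morseness of Cayley geodesics between elements of $H$ to uniform Morseness of $X$-geodesics between points of $Y$. This amounts to an application of quasi-isometry invariance of Morseness together with Lemma \ref{lemma:morse-traced-by-geodesic}, after which Morse-dichotomy upgrades Morseness to strong contraction for free, and Theorem \ref{intro:quasi-geodesics-gen} absorbs the rest.
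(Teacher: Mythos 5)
Your proposal is correct and follows essentially the same route as the paper: reduce to Theorem \ref{intro:quasi-geodesics-gen} applied to the orbit $Y = Hx_0$, using stability plus Morse-dichotomy to get a uniform contraction constant for geodesics between orbit points. The only difference is that you spell out the transport of uniform Morseness from Cayley geodesics in $H$ to $X$-geodesics between points of $Y$, a step the paper asserts without detail.
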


\begin{proof}
Pick a basepoint $x_0\in X$. Since $H$ is a stable subgroup of $G$ (and hence undistorted in $G$) and $G$ acts geometrically on $X$, the map $\phi : H\to X$ defined by $\phi(h) = hx_0$ is a quasi-isometric embedding. Define $Y = \phi(H)$. Moreover, there exists a Morse gauge $M$ such that $[x, y]$ is $M$-Morse for all elements $x, y\in Y$. Since $X$ is Morse-dichotomous, there exist constants $C$ and $r$ such that $[x, y]$ is $C$-contracting and $r$-quadrangle-contracting for all $x, y\in Y$.

Theorem \ref{intro:quasi-geodesics-gen} shows that $(Y, d)\hookrightarrow (\hat{X}_K, \hat{d})$ is a quasi-isometric embedding and that $G$ acts acylindrical along $Y$. 
\end{proof}

\section{The diameter dichotomy}\label{sec:diameter}

In this section we prove Theorem \ref{thmintro:diameter_dich}. More precisely, we show that if $\mathrm{Isom}(X)$ acts coboundedly on $X$, then the $K$--contraction space $\hat{X}_K$ is either unbounded or has uniformly bounded diameter only depending on the \emph{density} of the action of $\mathrm{Isom}(X)$ on $X$.  We further show that in this case, the $K$-contraction space $\hat{X}_K$ is unbounded if and only if $X$ contains a strongly-contracting ray.

\begin{defn}[density]
    Let $G$ be a group acting coboundedly on a metric space $X$. We say that the density of $x\in X$, denoted by $\rho_x$ is the Hausdorff distance between $G\cdot x$ and $X$. We say that the density of the action is $\rho = \inf_{x\in X}\{\rho_x\}$.
\end{defn}

\begin{prop}\label{prop:diameter_dichotomy}
If $\mathrm{Isom}(X)$ acts coboundedly on $X$ with density $\rho$, then there exists a constant $\Delta = \Delta(\rho) >0$ depending only on $\rho$ such that for all contraction gauges $K$, one of the following holds: 
\begin{itemize}
    \item $\widehat{diam}_K(\hat{X}_K)\leq \Delta$,
    \item $\hat{X}_K$ is unbounded.
\end{itemize}
\end{prop}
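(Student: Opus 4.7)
The plan is to argue contrapositively: if $\widehat{\diam}(\hat{X}_K) > \Delta$ for an appropriate $\Delta = \Delta(\rho)$, then $\hat{X}_K$ is unbounded. The strategy is to extract, from a pair of points realising large $\hat{d}$-distance, a long thin geodesic segment in $X$; to use the cobounded action to translate this segment along itself by some $g \in \mathrm{Isom}(X)$, producing a strongly-contracting axis; and then to deduce unboundedness of $\hat{X}_K$.

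The first step is to extract a long thin subsegment in $X$ from the hypothesis. Pick $x, y \in X$ with $\hat{d}_K(x, y) > \Delta$; since $\hat{d}_K(x, y) > 1$, the pair $(x, y)$ is not $K$-anti-contracting, so by Definition \ref{def:anti-contracting} there exist $r \geq 0$, a geodesic $\gamma$ from $x$ to $y$, and a subsegment $\gamma' \subseteq \gamma$ of length at least $K(r)$ which is $r$-thin. Taking $\Delta$ large and combining the uniform hyperbolicity of $\hat{X}_K$ (Proposition \ref{prop:contraction_space_hyp_for_graphs}) with separation arguments analogous to those in Theorem \ref{thm:non-uniform-action-full} and Lemma \ref{lemma:quadrangle-estimates}, one can arrange that the endpoints of $\gamma'$ sit at $\hat{d}_K$-distance on the order of $\Delta$ from both $x$ and $y$, so that $\gamma'$ is genuinely a ``central'' long thin subsegment.

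Next I would invoke the cobounded action to translate $\gamma'$ along itself. Letting $m = m_{\gamma'}$ and choosing a target $q \in \gamma'$ with $d(m, q) \gg \rho$ (possible once $K(r)$ is large enough, which we can guarantee by taking $\Delta$ large), the density hypothesis yields $g \in \mathrm{Isom}(X)$ with $d(gm, q) \leq \rho$. A local-to-global / Morse-axis construction using that $\gamma'$ and its $g$-translates are $r$-thin and overlap substantially then shows that the orbit $\{g^n m\}_{n \in \Z}$ is at bounded Hausdorff distance from a bi-infinite strongly-contracting quasi-geodesic, so that $g$ is a strongly-contracting element of $\mathrm{Isom}(X)$ with contraction constant controlled by $r$.

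Finally, Lemma \ref{lemm:nearby-geodesic} provides a nearby strongly-contracting geodesic line, and Lemma \ref{lemma:contracting_implies_quasi_geodesic} then shows that its image in $\hat{X}_K$ is a quasi-geodesic, whence $\widehat{\diam}(\hat{X}_K) = \infty$. The main obstacle is the quantitative control in the previous paragraph: the local-to-global construction must yield an axis whose contraction constant $C$ is bounded explicitly in terms of $r$, so that the finiteness condition $K(\Phi(C)^3) < \infty$ required by Lemma \ref{lemma:contracting_implies_quasi_geodesic} is a consequence of the finiteness $K(r) < \infty$ we started with. Establishing this tight dependence — ensuring that the $r$-thinness of $\gamma'$ passes to the whole axis without degrading, and that $K(r)$ is large enough relative to $\rho$ to allow the translation step — is where the thinness of $\gamma'$ and its behaviour under $g$-translation must be leveraged most carefully.
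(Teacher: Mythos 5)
Your approach is far heavier than what the statement requires, and it has gaps that I do not think can be closed in the stated generality. The paper's own proof is a soft, two-step argument that never touches the contraction machinery: assume $\widehat{\diam}(\hat{X}_K)=D<\infty$ with $D>\Delta$, pick $x,y\in X$ with $\hat d(x,y)\geq D-2$, let $m$ be the midpoint of $[x,y]_K$, and use coboundedness to find $g\in\mathrm{Isom}(X)$ with $d(gx,m)\leq\rho+1$; since $\hat X_K$ is $\delta_0$--hyperbolic with $\delta_0$ independent of $K$ and $X$ (Proposition \ref{prop:contraction_space_hyp_for_graphs}), thinness of the triangle $(x,y,gy)$ forces one of $\hat d(x,gy)$, $\hat d(y,gy)$ to be at least $\tfrac32 D-2\delta_0-\rho-4>D$, contradicting that $D$ is the diameter. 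No strongly-contracting axis is needed, which is precisely why the proposition holds for every contraction gauge, partial or full.

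The concrete problems with your route are these. First, the witness subsegment $\gamma'$ extracted from a non-anti-contracting pair only has length at least $K(r)$ for \emph{some} $r\geq 0$, and nothing forces $r$ (hence $K(r)$) to grow with $\Delta$: for instance in a tree with $K(r)=4r+1$, every pair at distance $\geq 1$ admits a $0$--thin witness subsegment of length $1$, however large $\hat d(x,y)$ is. So ``taking $\Delta$ large'' does not make $\gamma'$ long compared to $\rho$, and the translation step has nothing to bite on. Second, the isometry $g$ supplied by density only satisfies $d(gm,q)\leq\rho$; it gives no control on how $g\gamma'$ is positioned relative to $\gamma'$ (it may reverse or rotate the segment), so the proposed local-to-global axis construction does not get off the ground --- and no such local-to-global statement for thin or contracting segments is proved anywhere in the paper. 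When the paper does need to produce a strongly-contracting element from unboundedness (Proposition \ref{prop:diam_dichII}), it uses the Gromov classification of cobounded actions on hyperbolic spaces together with Proposition \ref{lemma:loxodromic_implies_strongly_contracting}, and only for full gauges with $K(r)\geq 10r+1$. Third, even granting an axis with contraction constant $C$ controlled by $r$, your final step needs $K(\Phi(C)^3)<\infty$, and since $\Phi(C)^3>r$ this does not follow from $K(r)<\infty$ when $K$ is partial --- yet the proposition is asserted for all contraction gauges.
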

\begin{figure}
    \centering
    \includegraphics{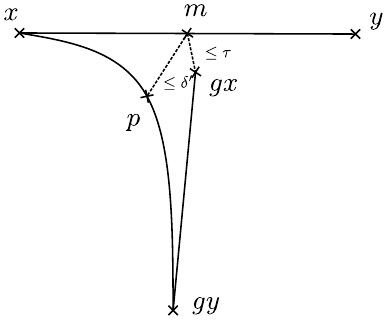}
    \caption{Proof of diameter dichotomy}
    \label{fig:diam}
\end{figure}

\begin{proof}
Let $x\in X$ such that $\rho_x \leq \rho + 1$. We show that the Proposition holds for $\Delta = 4\delta_0 + 2\rho + 9$, where $\delta_0$ is the hyperbolicity constant defined in Remark \ref{rem:delta_0}. Let $K$ be a contraction gauge where $\widehat{diam}_K(\hat{X}) = D<\infty$. Assume by contradiction that $D > \Delta$. There exist points $x, y\in X$ with $\hat{d}(x, y)\geq D - 2$. Let $m$ be the midpoint of the geodesic $[x, y]_K$. As depicted in Figure \ref{fig:diam}, there exists $g\in \mathrm{Isom}(X)$ such that $d(gx, m)\leq \rho+1$ (and hence $\hat{d}(gx, m)\leq \rho+1$). Consider the triangle with vertices $x, y$ and $gy$. Since $\hat{X}_K$ is a $\delta_0$ hyperbolic space, there exists $p\in [x, gy]_K\cup[gy, y]_K$ with $\hat{d}(m, p)\leq \delta_0$ (and hence $\hat{d}(p, gx)\leq \rho + \delta_0 +1$). If $p\in [x, gy]_K$, we have that 
\begin{align*}
    \hat{d}(x, gy) = \hat{d}(x, p) + \hat{d}(p, gy) \geq \hat{d}(x, m) - \delta_0 + \hat{d}(gy, gx) - \delta_0 - \rho - 1 \geq \frac{3}{2}D - 2\delta_0 - \rho - 4 > D, 
\end{align*}
a contradiction. If $p\in[gy, y]_K$, we get a contradiction analogously, which concludes the proof.
\end{proof}

Next we will prove a generalized version of the moreover part of Theorem \ref{thmintro:diameter_dich}. This is summarized in the proposition below.

\begin{prop}[Generalized version of moreover part of Theorem \ref{thmintro:diameter_dich}]\label{prop:diam_dichII}
    If $\mathrm{Isom}(X)$ acts coboundedly on $X$ and $K$ is a full contraction gauge, then $\hat{X}_K$ is bounded if and only if $X$ does not have any strongly contracting geodesic ray or equivalently, if $\mathrm{Isom(X)}\acts X$ does not have a strongly-contracting element.
\end{prop}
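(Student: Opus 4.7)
The easy directions are immediate. A strongly contracting element $g\in\mathrm{Isom}(X)$ has its orbit at bounded Hausdorff distance from a bi-infinite strongly contracting geodesic (Lemma \ref{lemm:nearby-geodesic}), whose positive half is a strongly contracting ray. A strongly contracting ray in turn forces $\hat X_K$ to be unbounded by Corollary \ref{corollary:unbounded_if_strongly_contracting}. Thus I focus on the converse: assuming $\hat X_K$ is unbounded, I want to construct a strongly contracting element of $\mathrm{Isom}(X)$, which simultaneously yields a strongly contracting ray via its axis.

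The plan is to first extract a strongly contracting bi-infinite geodesic $\lambda$ in $X$, and then a translating isometry along it. Pick $x_0\in X$ and $h_n\in\mathrm{Isom}(X)$ with $\hat d(x_0,h_nx_0)\to\infty$, using coboundedness to ensure such $h_n$ exist. Since $\hat d(x_0,h_nx_0)>1$, each pair $(x_0,h_nx_0)$ is not $K$-anti-contracting, so $[x_0,h_nx_0]$ contains an $r_n$-thin subsegment $\sigma_n$ of length at least $K(r_n)$. A pigeonhole argument on the $\hat d$-budget (subdividing $[x_0,h_nx_0]_K$ into unit $\hat d$-pieces and using fullness of $K$ to bound the witnessing scale) should yield a subsequence with uniform $r_n=r$ and $\ell(\sigma_n)\to\infty$.

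Now use coboundedness again to translate each $\sigma_n$ by some $f_n\in\mathrm{Isom}(X)$ so that its midpoint $m_{\sigma_n}$ sits within $\rho+1$ of $x_0$, and pass to an (ultra-)limit of the translated segments to obtain a bi-infinite geodesic $\lambda\colon\mathbb R\to X$ passing near $x_0$. The property of being $r$-thin is preserved under such limits (it is a statement about geodesic quadrangles containing the segment, and these converge to quadrangles containing $\lambda$), so every sufficiently long subsegment of $\lambda$ is $r$-thin; that is, $\lambda$ is $r$-quadrangle-contracting, and hence strongly contracting by Lemma \ref{lemma:full_strong_contraction_equivalence}. Its positive half is a strongly contracting ray. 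To upgrade to a strongly contracting element, apply coboundedness once more to find $g\in\mathrm{Isom}(X)$ with $d(g\lambda(0),\lambda(T))\le\rho+1$ for some large $T$; since $\lambda$ is strongly contracting, $g\lambda$ fellow-travels $\lambda$ (via Lemmas \ref{lemma:sc-traced-by-geodesic} and \ref{lemma:hausdorff_contraction}), so the orbit $\{g^kx_0\}$ is a quasi-geodesic staying in a bounded neighborhood of $\lambda$, hence is strongly contracting; $g$ is the desired element.

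The main obstacle is the pigeonhole step producing a uniform $r$ with divergent witness lengths: a priori the $r_n$ might grow, and the challenge is to convert a growing $\hat d$-diameter into long $r$-thin subsegments at a \emph{single} scale. A secondary technical point is ensuring that the limit geodesic $\lambda$ actually lives in $X$ (rather than in an ultralimit $X^{\omega}$) when $X$ is not assumed proper; an Arzela--Ascoli argument requires properness, and otherwise one needs a careful direct construction. A backup route, if the pigeonhole and limit argument prove slippery, is to extract a loxodromic element of $\mathrm{Isom}(X)\acts\hat X_K$ via Gromov's classification of cobounded isometric actions on hyperbolic spaces (ruling out the horocyclic case by a midpoint-translation argument similar to that in Lemma \ref{lemma:quadrangle-estimates}) and then apply Proposition \ref{lemma:loxodromic_implies_strongly_contracting}, after comparing $\hat X_K$ with $\hat X_{K'}$ for a gauge $K'$ satisfying $K'(r)\ge 10r+1$.
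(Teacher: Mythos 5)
Your ``backup route'' is exactly the paper's proof, and it is the one that works: since $X$ is $1$-dense in $\hat X_K$, the cobounded action of $\mathrm{Isom}(X)$ on $X$ induces a cobounded action on the hyperbolic space $\hat X_K$; the classification of such actions (Lemma \ref{lemma:classification_hyperbolic_actions}, which the paper quotes rather than reproves) then produces a loxodromic $g\in\mathrm{Isom}(X)$ whenever $\hat X_K$ is unbounded, and Proposition \ref{lemma:loxodromic_implies_strongly_contracting} converts loxodromicity on $\hat X_K$ into strong contraction on $X$. Your easy direction (Lemma \ref{lemm:nearby-geodesic} plus Corollary \ref{corollary:unbounded_if_strongly_contracting}) also matches the paper. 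You should promote the backup to the main argument.

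The route you actually propose as primary has genuine gaps, which you partly flag but do not close. First, the pigeonhole step: unboundedness of $\hat X_K$ gives, along $[x_0,h_nx_0]$, many disjoint witnesses, each $r_i$-thin of length at least $K(r_i)$, but nothing bounds the scales $r_i$. The only available bound on a witnessing scale is through the $d$-length of the jump it certifies (as in the proof of Proposition \ref{prop:loxodromic_implies_wpd}, where a consecutive pair at bounded $d$-distance is found using the linear lower bound $\hat d(x,g^nx)\geq nc$ supplied by a loxodromic element). Without a loxodromic element in hand you cannot produce such pairs, so you can neither fix $r$ nor force $\ell(\sigma_n)\to\infty$ at a single scale; the witnesses could, for instance, live at scales $r_n\to\infty$ with each contributing $\hat d$-distance about $1$. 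Second, even granting uniform $r$ and divergent lengths, $r$-thinness is a condition only at the \emph{midpoint} of a segment and does not pass to subsegments, so the limit $\lambda$ would only satisfy a thinness condition at the single point $\lambda(0)$ rather than being $r$-quadrangle-contracting; to make your limit work you would need the witnesses to be $r$-quadrangle-contracting, which the definition of separation does not provide (this is precisely the distinction between $\mathcal{A}_K$ and $\mathcal{B}_K$ in Section \ref{sec:alternative-definitions}). Finally, on the hypothesis mismatch you half-noticed: Proposition \ref{lemma:loxodromic_implies_strongly_contracting} is stated for $K(r)\geq 10r+1$ while the present statement assumes only that $K$ is full; your proposed fix of passing to $K'=\max(K,10r+1)$ goes the wrong way, since enlarging the gauge enlarges $\mathcal{A}_K$ and hence can only decrease $\hat d$, so unboundedness of $\hat X_K$ does not transfer to $\hat X_{K'}$.
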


One direction follows directly from Lemma \ref{lemma:contracting_implies_quasi_geodesic}. To show the other we use the classification of actions on hyperbolic spaces.

As shown in Gromov \cite{Gromov-hyp} an action of a group $G$ on a hyperbolic space $Y$ falls into exactly one of the following three categories: 
\begin{itemize}
    \item (elliptic) The orbit of $G$ is bounded.
    \item (parabolic) The group $G$ has unbounded orbit but no loxodromic elements.
    \item (loxodromic) There exists a loxodromic element $g\in G$.
\end{itemize}

For example in \cite{CDMT:amenable}, it is shown that if a group $G$ acts coboundedly on a hyperbolic space $Y$, then the action cannot be parabolic, leading to the following well-known lemma.

\begin{lemma}\label{lemma:classification_hyperbolic_actions}
    If a group $G$ acts coboundedly on a hyperbolic space $Y$, then either $Y$ is bounded, or there exists an element $g\in G$ acting loxodromically on $Y$.
\end{lemma}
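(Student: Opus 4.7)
The plan is to invoke Gromov's trichotomy recalled in the paragraph preceding the statement: every isometric action of a group on a hyperbolic space falls into exactly one of three categories---elliptic, parabolic, or loxodromic. Combined with the cited fact from \cite{CDMT:amenable} that a cobounded action cannot be parabolic, the lemma follows almost immediately.

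Concretely, given the cobounded action of $G$ on $Y$, I apply the trichotomy. If the action is elliptic, the orbit $Gy_0$ of any basepoint $y_0 \in Y$ is bounded, say contained in some ball of radius $L$. Coboundedness yields a constant $R$ with $Y \subseteq \mc{N}_R(Gy_0)$, so $\diam(Y) \leq 2L + 2R$ and $Y$ is bounded, matching the first conclusion. If the action is loxodromic, then by definition there is a loxodromic element $g \in G$, matching the second conclusion. The result from \cite{CDMT:amenable} excludes the parabolic case, so these two possibilities exhaust all options.

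The only genuinely hard step is the exclusion of parabolic cobounded actions, which I would import from \cite{CDMT:amenable} rather than reprove in full. For completeness I would sketch the standard argument: in a parabolic action all elements fix a common boundary point $\xi \in \partial Y$, and the Busemann cocycle $c(g) = \beta_\xi(gy_0, y_0)$ vanishes identically on $G$ (a nonzero value would produce a loxodromic element with $\xi$ as an attracting or repelling fixed point). Hence $Gy_0$ lies on a single horosphere of $\xi$, and combining coboundedness with the $1$-Lipschitz property of $\beta_\xi(\cdot, y_0)$ forces $|\beta_\xi(\cdot, y_0)|$ to be bounded on all of $Y$. Since $Y$ is assumed unbounded, one then chooses two orbit points at arbitrarily large distance along this horosphere and runs a Schottky-style ping-pong to extract an element with two distinct boundary fixed points---a loxodromic element, contradicting the parabolic hypothesis. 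The main obstacle in making this argument rigorous is the non-properness of $Y$, which is precisely what \cite{CDMT:amenable} handles; for our purposes, citing that work is sufficient.
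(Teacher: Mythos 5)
Your proposal is correct and follows exactly the route the paper takes: it states Gromov's trichotomy, cites \cite{CDMT:amenable} to rule out the parabolic case for cobounded actions, and observes that the elliptic case forces $Y$ to be bounded by coboundedness. The paper treats the lemma as an immediate consequence of these two cited facts without writing out a separate proof, so your slightly more detailed write-up (including the sketch of why cobounded parabolic actions cannot occur) is, if anything, more explicit than the original.
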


With this result, we are ready to prove Proposition \ref{prop:diam_dichII}.

\begin{proof}[Proof of Proposition \ref{prop:diam_dichII}]
    If $\mathrm{Isom}(X)$ has a strongly-contracting element, then $X$ contains a strongly contracting geodesic. Corollary \ref{corollary:unbounded_if_strongly_contracting} then states that the contraction space $\hat{X}_K$ is unbounded.

    On the other hand, if $\hat{X}_K$ is unbounded, then Lemma \ref{lemma:classification_hyperbolic_actions} implies that there exists a loxodromic element $g\in \mathrm{Isom}(X)$ (since $X$ is $1$-dense in $(\hat{X}_K, \hat{d})$, $\mathrm{Isom}(X)$ acts coboundedly on $(\hat{X}_K, \hat{d})$). Proposition \ref{lemma:loxodromic_implies_strongly_contracting} implies that $g$ is strongly-contracting, and hence that there exists a strongly-contracting bi-infinite geodesic in $X$.
\end{proof}

\begin{lemma}\label{lemma:undistorted-empty-subgroups}
    Let $H$ be an undistorted subgroup of a finitely generated group $G$. Assume that the Morse boundary of $H$ is empty (that is, no geodesic metric space $Y$ on which $H$ acts geometrically contains a Morse geodesic ray). Let $K$ be a full contraction gauge such that $K(r)\geq 10r +1$ for all $r\geq 0$. If $X$ is a geodesic metric space on which $G$ acts geometrically, then the orbit $Hx$ is bounded with respect to the metric $\hat{d}$ for all elements $x\in X$. 
\end{lemma}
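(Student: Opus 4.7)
Suppose for contradiction that there is $x\in X$ for which $Hx$ is unbounded in $(\hat{X}_K, \hat{d})$. The plan is to extract from this a Morse element of $H$, which then yields a Morse geodesic ray in the Cayley graph of $H$ (a geodesic metric space on which $H$ acts geometrically), contradicting the empty Morse boundary assumption.

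First, since $G\acts X$ is proper, Theorem \ref{thm:non-uniform-action-full} gives that $G\acts \hat{X}_K$ is non-uniformly acylindrical. Restricting to $H\leq G$, the same holds for $H\acts \hat{X}_K$ (subsets of finite sets are finite). So $H\acts \hat{X}_K$ is a non-uniformly acylindrical action on a hyperbolic space with unbounded orbits.

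I claim this forces some $h\in H$ to act loxodromically on $\hat{X}_K$. By the Gromov classification of isometric actions on hyperbolic spaces, the only non-loxodromic unbounded possibility is a parabolic action: a unique global fixed point $\xi\in\partial \hat{X}_K$, every element elliptic, orbits accumulating at $\xi$. In such an action, for any $R>0$ one can choose points $y,y'$ deep in a horoball around $\xi$ with $\hat{d}(y,y')$ as large as desired and a common infinite set of elements of $H$ moving each by less than $R$; taking $\hat{d}(y,y')$ to exceed the constant $D(R)$ from the definition of non-uniform acylindricity contradicts that definition. This is the same argument that Osin uses to rule out parabolic acylindrical actions \cite{O:acylindrical}, and it applies verbatim in the non-uniform setting. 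Hence some $h\in H$ is loxodromic on $\hat{X}_K$.

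Proposition \ref{lemma:loxodromic_implies_strongly_contracting} then upgrades $h$ to be strongly-contracting on $X$, and Lemma \ref{lem:theorem1.4} makes $h$ Morse on $X$. Since $H$ is undistorted in $G$ and $G\acts X$ is geometric, the orbit map $\phi : H \to X$, $h'\mapsto h'x$, is an $H$-equivariant quasi-isometric embedding. Morseness pulls back across a quasi-isometric embedding: any quasi-geodesic in $H$ between two powers of $h$ maps under $\phi$ to a quasi-geodesic in $X$ close to the Morse quasi-axis of $h$, and pulling back this closeness yields that the quasi-geodesic in $H$ stays close to $\langle h\rangle$. Thus $h$ is a Morse element of $H$, whose quasi-axis gives (via Lemma \ref{lemma:morse-traced-by-geodesic}) a Morse geodesic ray in the Cayley graph of $H$, contradicting the empty Morse boundary hypothesis.

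The main obstacle is the parabolic rule-out step used to produce the loxodromic $h$; once that is in place, everything else is a direct consequence of results already established in the paper together with the standard quasi-isometry invariance of Morseness along orbits of undistorted subgroups.
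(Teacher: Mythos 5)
There is a genuine gap at the step where you produce the loxodromic element: non-uniform acylindricity does \emph{not} rule out parabolic actions, and Osin's argument does not transfer ``verbatim'' to the non-uniform setting. Osin's proof that acylindrical actions are never parabolic exhibits, for each $N$, a pair of points at distance at least $D(R)$ together with \emph{more than $N$} elements displacing both by less than $R$; this contradicts the uniform cardinality bound $n$ in the definition of acylindricity. To contradict non-uniform acylindricity (Definition \ref{def:non-uniform}) you would instead need a \emph{single} pair of far-apart points with \emph{infinitely many} such elements, and a parabolic action need not supply this. Concretely, $\Z$ acting on $\mathbb{H}^2$ by $z\mapsto z+n$ is parabolic with unbounded orbits and no loxodromic elements, yet it is non-uniformly acylindrical: for every point $y$ and every $R$ the set $\{n \in \Z : d(y, y+n)<R\}$ is already finite. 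So the implication ``non-uniformly acylindrical with unbounded orbits implies the existence of a loxodromic'' is false in general, and your sentence asserting ``a common infinite set of elements of $H$ moving each by less than $R$'' is exactly the unjustified (and in general false) step.

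The paper produces the loxodromic differently, and this is where the undistortedness of $H$ enters in an essential way (in your argument it is only used at the very end): one passes to the convex hull $\tilde{Y}$ of the orbit $Hx$ in $\hat{X}_K$, on which $H$ acts \emph{coboundedly}, and invokes Lemma \ref{lemma:classification_hyperbolic_actions} (cobounded actions on unbounded hyperbolic spaces always admit loxodromics, by \cite{CDMT:amenable}) rather than any form of acylindricity. Once a loxodromic $h\in H$ is in hand, the remainder of your argument --- Proposition \ref{lemma:loxodromic_implies_strongly_contracting} to get strong contraction in $X$, hence Morseness, and the pull-back of Morseness along the quasi-isometric orbit embedding of the undistorted subgroup $H$ --- agrees with the paper and is fine.
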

\begin{proof}

    Let $x\in X$ and let $\tilde{Y}$ be the convex hull of the orbit $Hx\subset (\hat{X}_K, \hat{d})$. Since $\hat{X}_K$ is hyperbolic and $H\subset G$ is undistorted, $H$ acts coboundedly on $\tilde{Y}$ and $\tilde{Y}$ is hyperbolic. Assume by contradiction that $\widehat{\diam}(\tilde{Y}) = \infty$. By Lemma \ref{lemma:classification_hyperbolic_actions}, there exists an element $h\in H$ such that $h\acts \tilde{Y}$ is loxodromic. Consequently $h\acts \hat{X}_K$ is loxodromic. By Proposition \ref{lemma:loxodromic_implies_strongly_contracting}, $h\acts X$ is strongly contracting. Since $G$ acts geometrically on $X$, this implies that $h$ is Morse --- a contradiction to $H$ having empty Morse boundary.
\end{proof}

\begin{cor}
    Let $K$ be a full contraction gauge with $K(r)\geq 10r+1$ for all $r\geq 0$. If $Y\subset X$ is an undistorted subspace with empty Morse boundary and $\mathrm{Y}$ acts coboundedly on $Y$ with density $\rho$, then $\widehat{\mathrm{diam}}(Y)$ is bounded, where the bound only depends on the density $\rho$.
\end{cor}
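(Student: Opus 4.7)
My plan is to apply the diameter dichotomy of Proposition \ref{prop:diameter_dichotomy} together with Proposition \ref{prop:diam_dichII} intrinsically to $Y$, rule out the unbounded case using the empty Morse boundary hypothesis, and then transfer the resulting diameter bound from $\hat{Y}_K$ to $Y$ sitting inside $\hat{X}_K$ using undistortedness.

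First, treating $Y$ as a geodesic metric space in its own right (with $\mathrm{Isom}(Y)$ acting coboundedly with density $\rho$), Proposition \ref{prop:diameter_dichotomy} produces a constant $\Delta(\rho)$ such that either $\widehat{\mathrm{diam}}_K(\hat{Y}_K)\leq \Delta(\rho)$, or $\hat{Y}_K$ is unbounded. To rule out the unbounded case, I would invoke Proposition \ref{prop:diam_dichII}: if $\hat{Y}_K$ were unbounded, then $\mathrm{Isom}(Y)\acts Y$ would contain a strongly-contracting element, and its axis would be a bi-infinite strongly-contracting geodesic in $Y$. Since strongly-contracting quasi-geodesics are Morse (Lemma \ref{lem:theorem1.4}), this would give a Morse geodesic ray in $Y$, contradicting the assumption that $Y$ has empty Morse boundary. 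Hence $\widehat{\mathrm{diam}}_K(\hat{Y}_K)\leq \Delta(\rho)$.

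It then remains to transfer this bound to the $\hat{d}_X$-diameter of $Y$ inside $\hat{X}_K$. The identity on $Y$ should induce a coarsely Lipschitz map $(\hat{Y}_K, \hat{d}_Y)\to(\hat{X}_K, \hat{d}_X)$ with constants depending only on $K$ and on the quasi-isometry constants of $Y\hookrightarrow X$. Since $d_X\leq d_Y$ on $Y$, the only issue is to bound the $\hat{d}_X$-cost of the unit-length shortcuts that appear in $\hat{Y}_K$, i.e. to bound $\hat{d}_X(a,b)$ for pairs $(a,b)\in Y\times Y$ that are $K$-anti-contracting in $Y$. To control this, one compares $Y$-geodesics between $a$ and $b$ with the corresponding $X$-geodesics; by undistortedness these are at bounded Hausdorff distance, and by Lemma \ref{lemma:hausdorff_contraction} combined with the thin/quadrangle-contracting/bounded-geodesic-image equivalence of Lemma \ref{lemma:full_strong_contraction_equivalence}, thin and quadrangle-contracting subsegments of controlled length in one give thin subsegments (with controlled parameters) in the other.

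The main obstacle will be this last transfer step. Concretely, the challenge is to verify that for every $K$-anti-contracting pair $(a,b)$ in $Y$, the $\hat{d}_X$-distance from $a$ to $b$ is bounded by a constant depending only on $K$ and the distortion constants, despite the fact that $X$-geodesics from $a$ to $b$ need not lie in $Y$ and could in principle acquire thin subsegments absent from their $Y$-counterparts; bookkeeping the interaction between the two scales governed by $K$ and by the quasi-isometry constants is the technical heart of the argument.
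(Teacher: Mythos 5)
Your first step (the intrinsic dichotomy for $\hat{Y}_K$, ruling out unboundedness via Proposition \ref{prop:diam_dichII} and the empty Morse boundary) is fine, but the overall route diverges from the paper's and the transfer step contains a genuine gap. The paper never leaves the ambient space: the corollary is meant to follow the argument of Lemma \ref{lemma:undistorted-empty-subgroups}, namely one looks at the quasi-convex hull of $Y$ inside $(\hat{X}_K,\hat{d})$, which is hyperbolic and on which the cobounded action persists because $Y$ is undistorted; Lemma \ref{lemma:classification_hyperbolic_actions} then forces either boundedness or a loxodromic element, and Proposition \ref{lemma:loxodromic_implies_strongly_contracting} converts a loxodromic on $\hat{X}_K$ into a strongly contracting (hence Morse) direction, contradicting the empty Morse boundary; the quantitative dependence on $\rho$ comes from the pushing argument of Proposition \ref{prop:diameter_dichotomy} run along $Y$ in $\hat{X}_K$. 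At no point is the intrinsic contraction space $\hat{Y}_K$ formed, so the comparison problem you set yourself simply does not arise.

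The gap in your version is the transfer from $\hat{d}_Y$ to $\hat{d}_X$. First, the assertion that undistortedness puts $Y$-geodesics at bounded Hausdorff distance from the corresponding $X$-geodesics is false: undistortedness only makes $Y$-geodesics into $X$-quasi-geodesics, and a quasi-geodesic fellow-travels a geodesic only when that geodesic is Morse --- which is precisely the property your hypotheses exclude (take $Y$ the graph of $x\mapsto\abs{x}$ in $\R^2$: the $Y$-geodesic from $(-n,n)$ to $(n,n)$ passes through the origin, at distance $n$ from the straight segment). Second, even setting that aside, what you need is that every $K$-anti-contracting pair $(a,b)$ of $Y$ has uniformly bounded $\hat{d}_X(a,b)$; but anti-contraction in $Y$ only says that $Y$-geodesics have no subsegments that are thin with respect to quadrangles \emph{contained in $Y$}, a far smaller family than the quadrangles of $X$, and the geodesics being tested are different in the two spaces. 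Lemmas \ref{lemma:hausdorff_contraction} and \ref{lemma:full_strong_contraction_equivalence} compare contraction properties of nearby paths within a single ambient space and do not bridge this intrinsic-versus-extrinsic discrepancy. As stated, this step is essentially as hard as the corollary itself, and no argument for it is given.
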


\section{Relatively hyperbolic groups}\label{sec:relatively_hyperbolic_groups}

In this section, we show that for groups hyperbolic relative to groups with empty Morse boundary, the contraction space is naturally quasi-isometric to the coned off space. This suggests that the contraction space should be viewed as a generalization of the coned-off space for not necessarily hyperbolic groups.

This is the only section were the results do not hold for the alternative contraction space and the $(\mc G_K, K)$--contraction space as defined in Section \ref{sec:alternative-definitions} and one of the main reasons we chose the definition of anti-contracting as we have. 

\subsection{Definition and properties}

Relative hyperbolicity can be thought of as a generalisation of hyperbolicity. Roughly speaking, a group $G$ (space $X$) is hyperbolic relative to a collection of subgroups (subspaces) $\mc P$ if $G$ ($X$) is hyperbolic after ``collapsing'' all elements of $\mc P$. A precise definition can be found in \cite{Bow:rel-hyp}. There are many different but equivalent definitions of relatively hyperbolic groups (and spaces). In this paper, we do not work with any definition in particular but use the the results of \cite{S:distformrelhyp}, which we summarize below.

\begin{lemma}[Consequences of \cite{S:distformrelhyp}]\label{lemma:rel_hyp_projection}
Let $X$ be a geodesic metric space hyperbolic relative to a collection of subset $\mc P$ and let $\{\pi_P : X\to P\mid P \in \mc P\}$ be a collection of closest point projections. There exists a constant $C$ and an increasing
function $f : \R_{\geq 1}\to \R_{\geq 1}$ such that the following conditions hold. 
\begin{enumerate}
   \item  If $\gamma$ is a geodesic and $\diam (\gamma \cap \mc N_{C}(P))\leq C'$ for all peripherals $P\in \mc P$, then $\gamma$ is $f(C')$--contracting. \label{prop:small-intersection-implies-strong-contraction}
   \item For all $P\in \mc P$, $x\in X$ and $p\in P$, $d_X(p, \pi_P(x))\leq C+ (d_X(x, p) - d_X(x, P))$.\label{prop:projections-are-nice}
   \item Let $\gamma$ be a geodesic with endpoints $x, y\in X$ and let $P\in \mc P$ be a peripheral. If $d(\gamma, \pi_P(x))\geq C$, then $d(\pi_P(x), \pi_P(y))\leq C$.\label{prop:large-projection-implies-intersection}
   \item Let $\gamma$ be a geodesic and let $P\in \mc P$ be a peripheral. Let $x, y$ be the first and last points of $\gamma$ which are in the closed $C$-neighbourhood of $P$, then for any point $z\in \gamma_{[x, y]}$ we have that $d(z, P)\leq 2C$.\label{prop:convexity}
\end{enumerate}

\end{lemma}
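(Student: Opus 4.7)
The plan is to derive all four statements from the projection machinery and the distance formula for relatively hyperbolic spaces developed in \cite{S:distformrelhyp}. The underlying fact I would use throughout is that peripherals $P \in \mc P$ are uniformly quasi-convex in $X$ and admit coarsely Lipschitz closest-point projections $\pi_P$ satisfying Behrstock-type inequalities, with all constants uniform over the peripheral family.

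I would start with the easier statements. For (4), quasi-convexity of peripherals gives the claim directly after possibly enlarging $C$: if $x, y$ are the first and last entry points of $\gamma$ into $\mc N_C(P)$, then $\gamma_{[x,y]}$ is a geodesic with both endpoints in $\mc N_C(P)$, and a standard Morse/quasi-convexity argument confines the whole segment to $\mc N_{2C}(P)$. For (2), this is a triangle-inequality computation once $\pi_P$ is interpreted as nearest-point projection onto the quasi-convex set $P$: the excess $d_X(x,p) - d_X(x,P)$ controls how far $p$ can be from $\pi_P(x)$, up to a uniform additive error absorbed into $C$. For (3), I would invoke the bounded geodesic image property for peripherals, which is part of Sisto's projection axioms: if $\gamma$ stays outside the $C$-neighbourhood of $\pi_P(x)$, then projection to $P$ is coarsely constant along $\gamma$, so $\pi_P(x)$ and $\pi_P(y)$ are within $C$ of one another.

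The main obstacle is (1), which upgrades ``small intersection with peripherals'' to strong contraction with constant depending only on the intersection bound $C'$. My strategy is to pass through the coned-off space $\hat{X}^{\mathrm{cone}}$ in which each peripheral is coned off to a point, which is hyperbolic by the definition of relative hyperbolicity. Using (4) to control how $\gamma$ approaches each peripheral, together with (2) and (3) to bound the projection distances $d_P(\gamma^-, \gamma^+)$ uniformly in terms of $C'$, I would show that the image of $\gamma$ in $\hat{X}^{\mathrm{cone}}$ is a quasi-geodesic with parameters depending only on $C'$. Since quasi-geodesics in the hyperbolic space $\hat{X}^{\mathrm{cone}}$ are uniformly contracting there, the distance formula of \cite{S:distformrelhyp}, which compares $d_X$ to $d_{\hat{X}^{\mathrm{cone}}}$ plus a sum of truncated peripheral projections, translates this into strong contraction of $\gamma$ in $X$ with some constant $f(C')$.
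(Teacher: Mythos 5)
The paper does not actually prove this lemma: it is stated as a black-box summary of results from \cite{S:distformrelhyp}, and the text explicitly says that no particular definition of relative hyperbolicity is used, only Sisto's projection machinery. So any derivation you give is ``different from the paper's proof'' by default. Your roadmap is broadly consistent with how these facts are established in the literature, and items (3) and (4) are handled correctly: (4) is uniform quasi-convexity of peripherals, and (3) is the bounded-geodesic-image/coarse-constancy property of the peripheral projections.

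Two points deserve pushback. First, item (2) is not ``a triangle-inequality computation.'' The triangle inequality gives $d_X(p,\pi_P(x))\leq d_X(p,x)+d_X(x,P)$, which has the wrong sign on the second term; the stated inequality is equivalent to saying that the concatenation $[p,\pi_P(x)]\cup[\pi_P(x),x]$ has length at most $d_X(p,x)+C$, i.e.\ that geodesics from $x$ to any point of $P$ pass uniformly close to $\pi_P(x)$. That is precisely the strong-contraction property of peripheral projections and is one of the main technical results of \cite{S:distformrelhyp}, not a formal consequence of $\pi_P$ being a nearest-point projection onto a quasi-convex set (in a non-hyperbolic ambient space the latter gives you essentially nothing). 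Second, in item (1) the final step --- passing from ``the image of $\gamma$ in the coned-off space is a quasi-geodesic'' back to strong contraction of $\gamma$ in $X$ --- is the genuinely hard part and is asserted rather than argued. Strong contraction is a statement about closest-point projections of balls of $X$ onto $\gamma$; the distance formula compares distances, and does not by itself control where an arbitrary geodesic $\lambda$ of $X$ projects onto $\gamma$. To close this you would need to show $\gamma$ has the bounded-geodesic-image property in $X$, which requires analysing how $\lambda$ interacts with the peripherals it enters (using (2)--(4)) and not just its coned-off image. So the proposal is a reasonable reconstruction of the cited results, but as written it would not compile into a complete proof of (1) or (2).
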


The coned-off graph is defined as follows.

\begin{defn}[Coned-off graph]
    Let $G$ be a finitely generated group with generating set $S$ which is relatively hyperbolic to a collection of subgroups (subsets) $\mc P$. Let $X = \cay$. The coned-off graph $\tilde{X}$ is defined as follows. 

    \textbf{Vertices of $\tilde{X}$:} The vertices of $\tilde{X}$ are the elements of $G$.

    \textbf{Edges of $\tilde{X}$:} There is an edge between vertices $x, y \in G$ if $x$ and $y$ are connected by an edge in $X$ or if there exists a peripheral $P\in \mc P$ such that $x, y\in P$. 
\end{defn}

With this definition, there exists a natural inclusion from $X$ to the coned-off graph $\tilde{X}$, which we will denote by $\iota_{cone}$. We denote the (induced graph) metric of $\tilde{X}$ by $d_{cone}$.

\subsection{Quasi-isometry between the coned-off graph and the contraction space}

In this section, $G$ denotes a finitely generated group with finite generating set $S$ which is hyperbolic relative to a collection of subgroups $\mc P$ which all have empty Morse boundary. The set $\{\pi_P : X\to P | P\in \mc P\}$ denotes a system of closest point projections and $C, f$ are as in Lemma \ref{lemma:rel_hyp_projection}. They Cayley graph $\cay$ is denoted by $X$ and the coned-off graph by $\tilde{X}$.

We show that the coned-off graph $\tilde{X}$ is quasi-isometric to the contraction space $\hat{X}$ by showing that sets which are uniformly bounded in one graph are also uniformly bounded in the other.

\begin{lemma}\label{lem:paraboilc_subgroups_have_finite_diam}
    Let $P\in \mc P$ be a peripheral, then $\widehat{\diam}(P)<\infty$. 
\end{lemma}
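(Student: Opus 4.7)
The plan is to apply Lemma \ref{lemma:undistorted-empty-subgroups} directly, with $H = P$ and with the geometric action of $G$ on $X = \cay$ (which is automatic). Since the contraction gauge $K$ used to define $\hat{X}$ in Section \ref{sec:construction} satisfies $K(r) = 10r+1$, the hypotheses on $K$ in Lemma \ref{lemma:undistorted-empty-subgroups} are met. It then remains only to verify the two algebraic/geometric hypotheses on the subgroup $P$.

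The first hypothesis, that $P$ has empty Morse boundary, holds by assumption on the collection $\mc P$. The second hypothesis, that $P$ is undistorted in $G$, is a classical consequence of relative hyperbolicity: peripheral subgroups of a relatively hyperbolic group are quasi-isometrically embedded. I would invoke this as a well-known fact, or alternatively derive it quickly from Lemma \ref{lemma:rel_hyp_projection}\eqref{prop:projections-are-nice} applied with $x$ on a Cayley geodesic from $e$ to $p \in P$.

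With both hypotheses verified, Lemma \ref{lemma:undistorted-empty-subgroups} yields a constant bounding $\hat{d}(e, hx)$ uniformly over $h \in P$, for every basepoint $x \in X$. Choosing $x = e$, the orbit $P\cdot e$ is bounded in $(\hat{X}, \hat{d})$. Since the peripheral subset $P \subset X$ corresponding to the subgroup $P \in \mc P$ is exactly the coset through the identity, i.e.\ the vertex set $P \cdot e$, we obtain $\widehat{\diam}(P) < \infty$, as required.

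I do not anticipate a real obstacle: the work was done upstream in proving Lemma \ref{lemma:undistorted-empty-subgroups}, whose proof combined the classification of cobounded actions on hyperbolic spaces (Lemma \ref{lemma:classification_hyperbolic_actions}) with the fact that loxodromic elements on $\hat{X}_K$ act strongly-contracting on $X$ (Proposition \ref{lemma:loxodromic_implies_strongly_contracting}), the latter being Morse under a geometric action. The only micro-care needed here is in matching conventions between ``the subgroup $P$'' and ``the peripheral subset $P \subset X$''; this is immediate for Cayley graphs.
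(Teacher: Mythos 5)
Your proposal is correct and follows essentially the same route as the paper: the paper's proof is exactly the observation that peripheral subgroups are undistorted (it cites Lemma \ref{lemma:rel_hyp_projection}\eqref{prop:convexity} for this, whereas you suggest \eqref{prop:projections-are-nice} --- either works) combined with a direct application of Lemma \ref{lemma:undistorted-empty-subgroups}. No gap.
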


\begin{proof}
    Since peripheral subgroups of relatively hyperbolic groups are undistorted (this is for example a consequence of Lemma \ref{lemma:rel_hyp_projection}\ref{prop:convexity}), the lemma is a direct consequence of Lemma \ref{lemma:undistorted-empty-subgroups}.
\end{proof}

Since there are only finitely many types of peripheral subgroups, we have that $\sup_{P\in \mc P}\{\widehat{\diam}{P}\}< \infty$ and we denote it by $D_{per}$.

The next lemma can be thought of as a converse of Lemma \ref{lem:paraboilc_subgroups_have_finite_diam} and shows that if points are far away in the coned-off graph $\tilde{X}$, they are far away in the contraction space $\hat{X}$.
 
\begin{lemma}\label{lemma:thin-transitions}
     There exists a constant $D_{rel}$ such that the following holds. Let $x, y\in X$ be two points with $d_{cone}(x, y) > D_{rel}$, then $\hat{d}(x, y) > 1$.
\end{lemma}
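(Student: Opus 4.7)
The plan is to prove the contrapositive: assuming $\hat{d}(x,y) \leq 1$, I will show that $d_{cone}(x,y)$ is bounded by some constant $D_{rel}$ depending only on the relatively hyperbolic structure. If $d(x,y)\leq 1$ then $d_{cone}(x,y)\leq 1$ and we are done, so assume $(x,y)$ is $K$-anti-contracting, i.e.\ no subsegment of any geodesic $[x,y]$ of length at least $10r+1$ is $r$-thin.

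I fix such a geodesic $\gamma$ and a sufficiently large constant $L_0$ to be chosen below. By Lemma \ref{lemma:rel_hyp_projection}\eqref{prop:convexity}, for each peripheral $P \in \mc P$ the set $\gamma \cap \mc N_C(P)$ is contained in a subsegment $\beta_P \subseteq \mc N_{2C}(P)$ of $\gamma$. I call $P$ \emph{deep} if $|\beta_P|\geq L_0$, list the deep peripherals $P_1,\ldots,P_n$ in the order they appear along $\gamma$, and set $\beta_i := \beta_{P_i}$. The complementary ``transition'' subsegments $\alpha_0,\alpha_1,\ldots,\alpha_n$ (with $\alpha_j$ lying between $\beta_j$ and $\beta_{j+1}$) satisfy $\diam(\alpha_j \cap \mc N_C(P))<L_0$ for every peripheral $P$, by construction.

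By Lemma \ref{lemma:rel_hyp_projection}\eqref{prop:small-intersection-implies-strong-contraction} each $\alpha_j$ is $f(L_0)$-contracting, and then Lemma \ref{lemma:full_strong_contraction_equivalence} upgrades this to $r$-quadrangle-contracting for $r := \Phi(f(L_0))$. Anti-contraction forbids any subsegment of $\gamma$ of length $\geq K(r)=10r+1$ being $r$-thin, so in particular $|\alpha_j|<10r+1$. To bound $n$, whenever $n\geq 2$ I form the extended subsegment $\tau\subseteq\gamma$ by concatenating a length-$L_0$ tail of $\beta_j$, the transition $\alpha_j$, and a length-$L_0$ head of $\beta_{j+1}$. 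A direct bookkeeping of the peripheral intersections of $\tau$ shows that $\diam(\tau\cap\mc N_C(P))$ is bounded by a constant multiple of $L_0$ for every $P$ (combining the already-bounded transition intersections with the controlled $L_0$-length tails inside $\mc N_{2C}(P_j)$ and $\mc N_{2C}(P_{j+1})$), so $\tau$ is $r'$-quadrangle-contracting with $r':=\Phi(f(cL_0))$ for an absolute constant $c$. Choosing $L_0$ large enough that $2L_0 \geq K(r') = 10r'+1$—which is possible since the inequality involves only fixed functions—the segment $\tau$ itself becomes an $r'$-thin subsegment of forbidden length, contradicting anti-contraction. Hence $n\leq 1$.

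The conclusion now follows quickly: if $n=0$ then $|\gamma|<10r+1$, while if $n=1$ then $\gamma = \alpha_0 \beta_1 \alpha_1$ with $\beta_1\subseteq \mc N_{2C}(P_1)$ and $|\alpha_0|,|\alpha_1|<10r+1$, so both $x$ and $y$ lie within distance $10r+1+2C$ of $P_1$. The path $x\to\pi_{P_1}(x)\to\pi_{P_1}(y)\to y$ in $\tilde X$ (using the single coned-off edge inside $P_1$) then gives $d_{cone}(x,y)\leq 2(10r+1+2C)+1$, so setting $D_{rel}$ to this constant finishes the proof. The main obstacle is the fixed-point style choice of $L_0$ satisfying $2L_0\geq 10\Phi(f(cL_0))+1$: this amounts to a single growth estimate on the fixed functions $\Phi$ and $f$, and while it is routine when $\Phi\circ f$ is linear or polynomial, some care is needed in checking this uniformly across relatively hyperbolic structures.
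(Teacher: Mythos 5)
Your overall architecture (contrapositive, decomposition of $[x,y]$ into deep peripheral pieces $\beta_i$ and transition pieces $\alpha_j$, length bound on each $\alpha_j$ via anti-contraction, and the endgame for $n\le 1$) is reasonable, but the step ruling out $n\ge 2$ has a genuine gap, and it is not the "routine growth estimate" you describe at the end. The segment $\tau$ you build contains a subsegment of length $L_0$ lying in $\mc N_{2C}(P_j)$. The peripherals are only assumed to have empty Morse boundary; they can contain flats, and a geodesic of length $\ell$ inside (or within $2C$ of) a flat is not $r$-thin for any $r<\ell/2-O(C)$ --- this is exactly the observation used for the sheets in the $\Z\ast\Z^2$ example of Section \ref{sec:alternative-definitions}. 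Since $r'$-quadrangle-contraction requires \emph{every} subsegment of length $\ge 3r'$ to be $r'$-thin, $\tau$ cannot be $r'$-quadrangle-contracting for any $r'$ essentially below $L_0/3$. Hence any valid $r'$ satisfies $K(r')=10r'+1>10L_0/3$, whereas $|\tau|=2L_0+|\alpha_j|$, and $|\alpha_j|$ may be as small as $0$ (two deep peripherals can abut along the geodesic). So $|\tau|<K(r')$ and no contradiction with anti-contraction is available; in particular your target inequality $2L_0\ge 10\Phi(f(cL_0))+1$ provably fails whenever a peripheral contains large flat pieces. Your observation that $\diam(\tau\cap\mc N_C(P))$ is bounded by a multiple of $L_0$ is true but does not help: the contraction constant produced by Lemma \ref{lemma:rel_hyp_projection}\eqref{prop:small-intersection-implies-strong-contraction} then also grows with $L_0$, which is the circularity you flagged, and it cannot be broken by choosing $L_0$ differently.

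The missing idea, which is how the paper proceeds, is that the bottleneck sits at the \emph{transition into} a deep peripheral and has a thinness constant independent of the depth. If $z$ is the first point of $[x,y]$ with $d(z,P)\le C$ for a peripheral met in a set of diameter $\ge D$, and $z$ is far from $x$, then the subsegment of $[x,y]$ of length $D/2$ centred at $z$ is $5C$-thin: by Lemma \ref{lemma:rel_hyp_projection}\eqref{prop:large-projection-implies-intersection} everything before $z$ projects to $P$ within $C$ of $\pi_P(z)$, while the geodesic then travels distance $\ge D/2-O(C)$ along $P$, so some side of any containing quadrangle has large projection to $P$ and must pass within $O(C)$ of $z$. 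Thinness is a condition only on the midpoint, so the long non-contracting piece inside $P$ does no harm, and the constant $5C$ does not grow with $D$. With this claim one never needs to bound $n$: either some deep peripheral is entered at coned-off distance $\ge D$ from $x$ (and the claim contradicts anti-contraction once $D\ge 100C+2$), or the portion of $[x,y]$ beyond coned-off distance $D+2C+1$ from $x$ meets every peripheral boundedly, hence is uniformly quadrangle-contracting and, for $d_{cone}(x,y)$ large, long enough to contradict anti-contraction. You could repair your argument by replacing the quadrangle-contraction of $\tau$ with this thin-midpoint claim applied to the entry point of $P_2$, which is at distance at least $L_0$ from $x$ whenever $n\ge 2$.
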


\begin{proof}
    
    We will first prove the following claim. 
    
    \begin{claim}\label{claim:2}
        For large enough $D$, if there exists a peripheral $P\in \mc P$ such that following hold, then $(x, y)$ is not anti-contracting.
        \begin{itemize}
            \item $\diam(\mc N_C(P)\cap [x, y] )\geq D$. Let $z\in [x, y]$ be the point closest to $x$ with $d(z, P)\leq C$.
            \item $d_{cone}(x, z)\geq D$.
        \end{itemize}
    \end{claim}
    \textit{Proof of Claim.}
        Let $\gamma$ be the subsegment of $[x, y]$ centred at $z$ with length $D/2$. Let $x'$ and $y'$ be the left and right endpoint of $\gamma$. Note that by Lemma \ref{lemma:rel_hyp_projection}\eqref{prop:convexity}, we have that $d(y', P)\leq 2C$ and hence by the triangle inequality, $d(\pi_P(z), \pi_P(y'))\geq D/2 - 3C$. Furthermore, $d(\gamma_{[x', z]}, P)\geq C$ by the definition of $z$. Hence $d(\pi_P(x'), \pi_P(z))\leq C$ by \eqref{prop:large-projection-implies-intersection}. 
        
        Let $\mc Q = (\gamma_1, \gamma_2, \gamma_3, \gamma_4)$ be a quadrangle containing $\gamma$. Let $a$ and $b$ be the left and right endpoints of $\gamma_1$. Observe that $d(\pi_P(a), \pi_P(x'))\leq C$ by \eqref{prop:large-projection-implies-intersection} and hence by the triangle inequality $d(\pi_P(a), z)\leq 3C$. Consequently, $d(\pi_P(a), \pi_P(b))\geq d(\pi_P(x'), \pi_P(y')) - 2C \geq D/2 - 5C$. Let $2\leq i\leq 4$ be the largest index such that $d(\pi_P(\gamma_i^+), \pi_P(\gamma_i^-)) > C$. If $D/2 - 2C\geq 3C +1$, there exists $4\geq i \geq 2$ which satisfies this. By Lemma \ref{lemma:rel_hyp_projection}\eqref{prop:large-projection-implies-intersection} we have that $d(\gamma_i, \pi_P(\gamma_i^+)) < C$ and by maximality of $i$, we have that $d(\gamma_i, z)\leq 2C + d(\pi_P(a), z) \leq 5C$.

        Hence, for $D\geq 50C +1$, we have that $\gamma$ is $5C$--thin and at least $50C+1$--long, implying that the pair $(x, y)$ is not anti-contracting. 
    \hfill$\blacksquare$

    From now on, fix $D$ large enough that satisfies the claim. Next we will show that for large enough $D_{rel}$, the statement holds. Conditions for $D_{rel}$ are outlined in the proof. Let $x, y\in X$ be points with $\tilde{d}(x, y)\geq D_{rel}$. 

    If $[x, y]$ satisfies the assumptions of Claim \ref{claim:2}, then $(x, y)$ is not anti-contracting and we are done. Assume instead that $[x, y]$ does not satisfy the assumptions of Claim \ref{claim:2}. In other words, for all peripherals $P\in \mc P$, at least one of the following holds.
    \begin{itemize}
        \item $d_{cone}(x, P)\leq D + C$,
        \item $\diam(\mc N_C(P), [x, y])\leq D$.
    \end{itemize}
    Let $z\in [x, y]$ be the last point on $[x, y]$ with $d_{cone}(x, z) = D + 2C+1$. The conditions above imply that for every peripheral $\diam(\mc N_C(P), [x, y]_{[z, y]})\leq D$. By Lemma \ref{lemma:rel_hyp_projection}\eqref{prop:small-intersection-implies-strong-contraction}, we have that $[x, y]_{[z, y]}$ is $f(D)$--contracting. We have that $d(z, y)\geq d_{cone}(z, y)\geq D_{rel} - D - 2C -1$. So, for large enough $D_{rel}$, the geodesic $[x, y]_{[z, y]}$ is $r = \Phi(f(D))$--quadrangle-contracting and at least $10r+1$--long, implying that $(x, y)$ is not anti-contracting and hence $\hat{d}(x, y)>1$.
\end{proof}

We are now ready to prove that the coned-off graph $\tilde{X}$ and the contraction space $\hat{X}$ are quasi-isometric. 

\begin{prop}\label{prop:quasi-isom-to-cone}
     Let $G$ be a finitely generated group which is hyperbolic relative to a collection of subgroups $\mc P$ which all have empty Morse boundary. Then the contraction space $\hat{X}$ is naturally quasi-isometric to the coned off graph $\tilde{X}$.
\end{prop}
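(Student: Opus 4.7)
The plan is to show that the identity map on the common vertex set $G$ extends to a quasi-isometry between $\tilde{X}$ and $\hat{X}$. Since $G$ is $1$-dense in both spaces (edges have length $1$ in each), it suffices to establish two-sided comparisons between $d_{cone}$ and $\hat{d}$ on pairs of vertices.

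For the upper bound $\hat{d}(x,y) \lesssim d_{cone}(x,y)$, I would take a $d_{cone}$-geodesic from $x$ to $y$ and bound the $\hat{d}$-length of each edge traversed. An edge of $\tilde{X}$ is either an edge of the Cayley graph $X$ (contributing at most $1$ to $\hat{d}$, since the inclusion $X \hookrightarrow \hat{X}$ is $1$-Lipschitz) or a peripheral edge whose endpoints lie in some $P \in \mc P$. In the latter case Lemma \ref{lem:paraboilc_subgroups_have_finite_diam} together with the uniform constant $D_{per}$ (whose finiteness comes from there being only finitely many conjugacy classes of peripherals) gives $\hat{d}(x,y) \leq D_{per}$. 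Summing along the geodesic yields $\hat{d}(x,y) \leq D_{per} \cdot d_{cone}(x,y)$.

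For the lower bound $d_{cone}(x,y) \lesssim \hat{d}(x,y)$, I would invoke the contrapositive of Lemma \ref{lemma:thin-transitions}: if $\hat{d}(x,y) \leq 1$ for $x, y \in G$, then $d_{cone}(x,y) \leq D_{rel}$. To promote this from the ``edge-level'' statement to a general bound, subdivide a $\hat{d}$-geodesic from $x$ to $y$ into $n \leq \lceil \hat{d}(x,y) \rceil + 1$ subsegments of $\hat{d}$-length at most $1$, choosing the division points in $G$ by $1$-density. Applying the contrapositive to each consecutive pair of division points and summing gives $d_{cone}(x,y) \leq D_{rel} \cdot (\hat{d}(x,y) + 1)$.

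Combining these two bounds shows that the identity on $G$ is a quasi-isometric embedding from $(G, d_{cone})$ into $(G, \hat{d})$, and since $G$ is coarsely dense in each ambient space, this extends to a quasi-isometry $\tilde{X} \to \hat{X}$. There is no real obstacle here: the work has been done in Lemma \ref{lem:paraboilc_subgroups_have_finite_diam} (which controls the direction that could fail if peripherals contained strongly contracting geodesics, ruled out by the empty-Morse-boundary assumption) and Lemma \ref{lemma:thin-transitions} (the geometric heart of the argument, using the distance-formula consequences in Lemma \ref{lemma:rel_hyp_projection}). The remaining argument is only bookkeeping.
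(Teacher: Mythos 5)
Your proposal is correct and follows essentially the same route as the paper: the upper bound sums $D_{per}$ over the edges of a $d_{cone}$-geodesic using Lemma \ref{lem:paraboilc_subgroups_have_finite_diam}, and the lower bound subdivides a $\hat{d}$-geodesic into boundedly many pieces of $\hat{d}$-length at most $1$ and applies Lemma \ref{lemma:thin-transitions} to each consecutive pair. The only difference is bookkeeping in the second step: the paper picks the subdivision points greedily on $\gamma\cap X$ (one must include the endpoints of every traversed added edge, so roughly $2\hat{d}(x,y)$ points rather than your $\lceil\hat{d}(x,y)\rceil+1$, and they should be taken in $X$ rather than perturbed into $G$), which only affects the multiplicative constant.
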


\begin{proof}
    Let $x, y\in G$ and let $l = d_{cone}(x, y)$. Let $\gamma: I\to \tilde{X}$ be a geodesic from $x$ to $y$. For $0\leq i \leq l$, denote by $x_i$ the point on $\gamma$ with $d_{cone}(x, x_i) = i$. With this, $x_0 = x$ and $x_l = y$. For each $i$, we have one of the following. 
    \begin{itemize}
        \item  $(x_i, x_{i+1})\in E(X)$. In this case, $\hat{d}(x_i, x_{i+1})\leq d(x_i, x_{i+1})\leq 1$.
        \item There exists a peripheral $P\in \mc P$ such that $x_i, x_{i+1}\in P$. In this case, $\hat{d}(x_i, x_{i+1})\leq D_{per}$ by Lemma \ref{lem:paraboilc_subgroups_have_finite_diam} and the remark below.
    \end{itemize}
    
    Thus, by the triangle inequality, $\hat{d}(x, y)\leq lD_{per}$.

    Conversely, let $x, y\in X$ be points with $\hat{d}(x, y) \leq l$ and let $\gamma: I\to \hat{X}$ be a geodesic from $x$ to $y$. Define $x_0 = x$. For $1\leq i \leq 2l$, inductively define $x_i$ as the point on $\gamma\cap X$ which maximizes $\hat{d}(x, x_{i})$ but satisfies $\hat{d}(x_{i-1}, x_i)\leq 1$. Note that if $\hat{d}(x_i, x_{i-1})< 1$, then either $x_i = y$ or the points right after $x_i$ were in $\hat{X}-X$. Since $\hat{X}-X$ consists of a disjoint union of interiors of edges of length 1, the latter implies that $\hat{d}(x_i, x_{i+1}) = 1$. Thus, we have that $x_{2l} = y$. Further, for each $1\leq i \leq 2l$ we have by Lemma \ref{lemma:thin-transitions}, that $d_{cone}(x_{i-1}, x_i)\leq D_{rel}$. By the triangle inequality, we have that $d_{cone}(x, y)\leq 2lD_{rel}$.

    The above observations show that the map $\hat{\iota}_{cone} : (X, \hat{d})\to (\tilde{X}, d_{cone})$ is a quasi-isometry. Since $(X, \hat{d})$ is 1-dense in $\hat{X}$, the proposition follows. 
\end{proof}

\bibliography{mybib}
\bibliographystyle{alpha}

\end{document}